
\documentclass[12pt]{amsart}
\usepackage{latexsym}
\usepackage{amssymb}
\usepackage{amscd}
\usepackage{mathrsfs}

\setlength{\textwidth}{15.0truecm}
\setlength{\textheight}{22.5truecm}
\setlength{\topmargin}{0mm}
\setlength{\oddsidemargin}{0.3cm}
\setlength{\evensidemargin}{0.3cm}
\renewcommand\a{\alpha}
\renewcommand\b{\beta}
\newcommand\g{\gamma}
\renewcommand\d{\delta}
\newcommand\la{\lambda}

\renewcommand\th{\theta}

\newcommand\s{\sigma}

\renewcommand\r{\rho}

\newcommand\w{\omega}

\newcommand\vD{\varDelta}

\newcommand\ve{\varepsilon}

\newcommand\BA{\mathbf A}

\newcommand\BQ{\mathbf Q}
\newcommand\BF{\mathbf F}
\newcommand\BC{\mathbf C}
\newcommand\BB{\mathbf B}

\newcommand\BZ{\mathbf Z}

\newcommand\BN{\mathbf N}

\newcommand\bB{\mathbf B}

\newcommand\BU{\mathbf U}
\newcommand\BV{\mathbf V}

\newcommand\Bb{\mathbf b}
\newcommand\Bc{\mathbf c}

\newcommand\Bh{\mathbf h}

\newcommand\Bd{\mathbf d}

\newcommand\SC{\mathscr{C}}

\newcommand\SL{\mathscr{L}}

\newcommand\ST{\mathscr{T}}
\newcommand\SX{\mathscr{X}}

\newcommand\Fg{\mathfrak g}

\newcommand\Fh{\mathfrak h}

\newcommand\iv{^{-1}}
\newcommand\wh{\widehat}
\newcommand\wt{\widetilde}

\newcommand\ol{\overline}
\newcommand\ul{\underline}

\newcommand\lra{\leftrightarrow}

\newcommand\lp{\operatorname{\!\langle\!}}
\newcommand\rp{\operatorname{\!\rangle\!}}

\newcommand\cl{\operatorname{cl}}

\newcommand\re{\operatorname{re}}
\newcommand\ima{\operatorname{im}}

\newcommand{\isom}{\,\raise2pt\hbox{$\underrightarrow{\sim}$}\,}
\numberwithin{equation}{section}

\newtheorem{thm}{Theorem}[section]
\newtheorem{lem}[thm]{Lemma}
\newtheorem{cor}[thm]{Corollary}
\newtheorem{prop}[thm]{Proposition}

\def \para#1{\par\medskip\textbf{#1}
              \addtocounter{thm}{1}}

\def \remark#1{\par\medskip\noindent
                \textbf{Remark #1}
                \addtocounter{thm}{1}}

\begin{document}
\setlength{\baselineskip}{4.9mm}
\setlength{\abovedisplayskip}{4.5mm}
\setlength{\belowdisplayskip}{4.5mm}
\renewcommand{\theenumi}{\roman{enumi}}
\renewcommand{\labelenumi}{(\theenumi)}
\renewcommand{\thefootnote}{\fnsymbol{footnote}}
\renewcommand{\thefootnote}{\fnsymbol{footnote}}
\allowdisplaybreaks[2]
\parindent=20pt
\medskip
\begin{center}
  {\bf Diagram automorphisms and quantum groups } 
\par
\vspace{1cm}
Toshiaki Shoji and Zhiping Zhou
\\
\vspace{0.7cm}
\title{}
\end{center}

\begin{abstract}
Let $\BU^-_q = \BU^-_q(\Fg)$ be the negative part of the quantum group associated to
a finite dimensional simple Lie algebra $\Fg$, and $\s : \Fg \to \Fg$
be the automorphism obtained from the diagram automorphism. 
Let $\Fg^{\s}$ be the fixed point subalgebra of $\Fg$, and put 
$\ul\BU^-_q = \BU^-_q(\Fg^{\s})$. Let $\BB$ be the canonical basis of $\BU_q^-$
and $\ul\BB$ the canonical basis of $\ul\BU_q^-$. $\s$ induces a natural action 
on $\BB$, and we denote by $\BB^{\s}$ the set of $\s$-fixed elements in $\BB$. 
Lusztig proved that there exists a canonical bijection $\BB^{\s} \simeq \ul\BB$
by using geometric considerations.  In this paper, we construct such a bijection
in an elementary way.
We also consider such a bijection in the case of certain affine quantum groups, 
by making use of PBW-bases constructed by Beck and Nakajima. 
\end{abstract}

\let\thefootnote\relax\
\footnote{2010 {\it Mathematics Subject Classification.} Primary 17B37; 
           Secondary 20G42, 81R50.}
\footnote{{\it Key Words and Phrases.} Quantum groups, canonical bases, PBW-bases}
\maketitle
\markboth{SHOJI-ZHOU}{Diagram automorphisms}
\pagestyle{myheadings}


\begin{center}
{\sc Introduction }
\end{center}

\para{0.1.}
Let $X$ be a Dynkin diagram with vertex set $I$, and $\Fg$ the semisimple Lie algebra
associated to $X$.  We denote by $\BU_q = \BU_q(\Fg)$ the quantum  enveloping algebra 
of $\Fg$, and by $\BU_q^-$ its negative part, which are associative algebras over $\BQ(q)$.  
Let $W$ be the Weyl group of $\Fg$, and $w_0$  the longest element of $W$.
Let $\Bh = (i_1, \dots, i_{\nu})$ be a sequence of $i_k \in I$ such that 
$w_0 = s_{i_1}\cdots s_{i_{\nu}}$ gives a reduced expression of $w_0$, where $s_i (i \in I)$
are simple reflections in $W$. 
For each $\Bh$ as above, there exists a basis $\SX_{\Bh}$ of $\BU^-_q$, called the PBW-basis
of $\BU_q^-$. Put $\BA = \BZ[q, q\iv]$, and let ${}_{\BA}\BU_q^-$ be Lusztig's integral 
form of $\BU_q^-$. 

We consider the following statements. 
\par\medskip
\noindent
(0.1.1) 
\begin{enumerate}
\item  
The $\BZ[q]$-submodule of $\BU_q^-$ generated by $\SX_{\Bh}$ is independent 
of the choice of $\Bh$, which we denote by $\SL_{\BZ}(\infty)$. 
\item 
The $\BZ$-basis of $\SL_{\BZ}(\infty)/q\SL_{\BZ}(\infty)$ induced from $\SX_{\Bh}$ 
is independent of the choice of $\Bh$.
\item 
For each $\Bh$, PBW-basis $\SX_{\Bh}$ gives rise to an $\BA$-basis of ${}_{\BA}\BU_q^-$. 
\end{enumerate}
We also consider a weaker version of (iii), 
\par\medskip
(iii$\,'$) \ For each $\Bh$, any element of $\SX_{\Bh}$ is contained in 
${}_{\BA}\BU_q^-$. 
\par\medskip
 
The canonical basis $\BB$ of $\BU^-_q$ was constructed by Lusztig [L2, L3] by using 
a geometric method. It is known that it coincides with 
the global crystal basis of Kashiwara [K1].  
\par 
The statement (0.1.1) can be verified in general 
by making use of the canonical basis or Kashiwara's global crystal basis.

\para{0.2.} 
We are interested in an elementary construction of canonical bases, in the sense 
that we don't appeal to Lusztig's geometric theory of canonical bases nor Kashiwara's 
theory of crystal bases. We shall construct canonical bases (as discussed in [L2]), 
by making use of PBW-basis, based on the properties (0.1.1). 
Actually, in their  theories, canonical bases or crystal bases 
are constructed independently from PBW-bases. However those constructions look 
like a huge black box, and it is not easy to trace the construction 
even in the small rank cases. On the other hand, the construction of 
PBW-bases is more explicit, the parametrization is easy, and   
they fit to direct computations.  
So it is important to express canonical basis in terms of PBW-bases, which is 
the problem closely
related to the elementary construction of canonical basis.       
\par 
In the case where $X$ is simply laced, the verification of (0.1.1) is rather easy. 
In the non-simply laced case, the problem is reduced to the case of type $B_2$ or 
$G_2$.  In the case of $B_2$, 
the properties (i) and (iii) were verified by [L1], by computing the commutation
relations of root vectors in the case of type $B_2$, and furthermore by applying the 
method of Kostant on the $\BZ$-form of Chevalley groups in the case of type $G_2$.
Later [X1] gave a proof 
of (iii) similar to the case of $B_2$. But in any case, it requires a hard 
computation.  
In [X2], Xi computed, in the case of $B_2$,  the canonical basis of $\BU^-_q$  
explicitly in terms of PBW-basis.  The property (ii) follows from his result. 
But the property (ii) for $G_2$ is not yet verified (in an elementary method).  
\par
If we assume (i) and (iii) in (0.1.1), one can construct the ``canonical basis'', 
which is only independent of $\Bh$, up to $\pm 1$. We call them  
the signed basis of $\BU^-_q$.  Thus in the non-simply laced case, one can construct
the signed basis.

\para{0.3.}
Assume that $X$ is simply laced, 
and let $\s$ be a graph automorphism of $X$. We denote by $\ul I$ the set of 
orbits in $I$ under the action of $\s : I \to I$.  Then $\s$ determines a Dynkin diagram 
$\ul X$ whose vertex set is given by $\ul I$. 
$\ul X$ corresponds to the $\s$-fixed point subalgebra $\Fg^{\s}$ of $\Fg$, 
and we denote by $\ul\BU_q = \BU_q(\Fg^{\s})$ the corresponding  
quantum  enveloping algebra, and $\ul\BU_q^-$ its negative part.  
Let $\BB$ be the canonical basis of $\BU^-_q$.  Then $\s$ permutes $\BB$, and 
we denote by $\BB^{\s}$ the set of $\s$-fixed elements in $\BB$.
We also denote by $\ul\BB$ the set of canonical basis of $\ul\BU^-_q$. 
In [L4] (and in [L3]), Lusztig proved that there exists a canonical bijection 
between $\BB^{\s}$ and $\ul\BB$, based on geometric considerations of canonical 
basis. 
\par
In this paper, we construct the bijection $\BB^{\s} \isom \ul \BB$ in an elementary 
way. 
We assume that $\s$ is admissible, namely for $\eta \in \ul I$, 
if $i, j \in \eta$ with $i \ne j$, then $i$ and $j$ are not joined in $X$. 
Let $\ve$ be the order of $\s$.  
We assume that 
$\ve = 2$ or 3 (note that if $X$ is irreducible, then 
$\ve = 2$ or 3). . 
Let $\BF$ be the finite field $\BZ/\ve\BZ$, and 
put $\BA' = \BF[q,q\iv] = \BA/\ve\BA$.  
Let ${}_{\BA}\BU^{-,\s}_q$ be the subalgebra of ${}_{\BA}\BU^-_q$ consisting of 
$\s$-fixed elements, and consider the $\BA'$-algebra
${}_{\BA'}\BU^{-,\s}_q = {}_{\BA}\BU^{-,\s}_q\otimes_{\BA}\BA'$. 
Let $J$ be the $\BA'$-submodule of ${}_{\BA'}\BU^{-,\s}_q$ consisting of elements 
of the form $\sum_{0 \le i < \ve}\s^i(x)$ for $x \in {}_{\BA'}\BU^-_q$.  
Then $J$ is a two-sided ideal of ${}_{\BA'}\BU^{-,\s}_q$, and we denote by 
$\BV_q$ the quotient algebra ${}_{\BA'}\BU^{-,\s}_q/J$. 
We define ${}_{\BA'}\ul\BU_q^-$ similarly to ${}_{\BA'}\BU_q^-$. 
We can prove the following result (Proposition 1.20 and Corollary 1.21).
\begin{thm}   
Assume that {\rm (iii)} in {\rm (0.1.1)} holds for ${}_{\BA}\BU_q^-$, 
and {\rm (iii$\, '$)}  holds for ${}_{\BA}\ul\BU_q^-$.  Then we have an isomorphism  of 
$\BA'$-algebras
\begin{equation*}
\tag{0.4.1}
{}_{\BA'}\ul\BU_q^- \simeq \BV_q.
\end{equation*} 
Moreover {\rm (iii)} holds for ${}_{\BA}\ul\BU_q^-$. 
\end{thm}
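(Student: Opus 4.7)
The plan is to construct an explicit $\BA'$-algebra homomorphism $\Phi\colon {}_{\BA'}\ul\BU_q^- \to \BV_q$ on Chevalley generators and prove it is an isomorphism by matching PBW bases. Writing the Chevalley generators of $\BU_q^-$ as $f_i$ ($i\in I$) and those of $\ul\BU_q^-$ as $\ul f_\eta$ ($\eta\in\ul I$), set $\Phi(\ul f_\eta) = \bigl(\sum_{i\in\eta} f_i\bigr) + J$. Admissibility forces $f_i$ and $f_j$ to commute when $i,j\in\eta$ are distinct, and $\s$ permutes each orbit, so the image lies in ${}_{\BA'}\BU_q^{-,\s}$. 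The first task is to check that these images satisfy the folded quantum Serre relations: when both orbits involved are singletons the relation coincides with its unfolded counterpart, and otherwise one expands $(\sum_{i\in\eta} f_i)^n$ using intra-orbit commutativity and pairs the resulting monomials off against the unfolded Serre relations applied to each $j\in\eta'$. The discrepancies fall into two kinds of terms: multiples of $\varepsilon$, which vanish in $\BA'$, and $\s$-orbit sums of elements of ${}_{\BA'}\BU_q^-$, which lie in $J$ by definition. This is a case-by-case verification over the admissible folded types $B_n, C_n, F_4, G_2$, with the $G_2$ case $(\varepsilon=3)$ by far the most delicate.

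The next step builds an inverse through PBW bases. Choose a reduced expression $\ul\Bh$ of the longest element of the Weyl group of $\ul X$; it lifts to a $\s$-stable reduced expression $\Bh$ of $w_0$ by replacing each folded simple reflection by the (commuting) product of the simple reflections in its orbit. Hypothesis (iii) for $\BU_q^-$ supplies the $\BA$-basis $\SX_\Bh$ of ${}_{\BA}\BU_q^-$ on which $\s$ acts by permutation; the $\s$-orbit sums span ${}_{\BA}\BU_q^{-,\s}$, and in $\BV_q$ the non-$\s$-fixed orbits die modulo $J$, so the $\s$-fixed elements of $\SX_\Bh$ descend to an $\BA'$-basis $\ol\SX_\Bh$ of $\BV_q$ labelled naturally by the PBW monomials for $\ul\Bh$. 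Hypothesis (iii$'$) places $\ul\SX_{\ul\Bh}$ inside ${}_{\BA}\ul\BU_q^-$, and a direct PBW comparison of commutation relations shows $\Phi(\ul\SX_{\ul\Bh}) = \ol\SX_\Bh$. Thus $\Phi$ sends an $\BA'$-spanning set of ${}_{\BA'}\ul\BU_q^-$ bijectively onto an $\BA'$-basis of $\BV_q$, so it is an isomorphism of $\BA'$-algebras.

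For the final assertion, (iii$'$) already puts $\ul\SX_{\ul\Bh}$ inside ${}_{\BA}\ul\BU_q^-$, and its $\BQ(q)$-linear independence is automatic. To prove $\BA$-spanning, work weight by weight: in weight $\mu$ the $\BA$-span $M'$ of $\ul\SX_{\ul\Bh}$ is a sublattice of $M = ({}_{\BA}\ul\BU_q^-)_\mu$ of the same $\BQ(q)$-rank, so $M/M'$ is finitely generated and torsion. The isomorphism (0.4.1) gives $M/M' \otimes_\BA \BA' = 0$, while comparison with the $\BA$-basis of ${}_{\BA}\BU_q^{-,\s}$ coming from $\s$-orbit sums of $\SX_\Bh$ (via (iii) for $\BU_q^-$) rules out torsion at primes other than $\varepsilon$; hence $M = M'$. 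The main obstacle I anticipate is the case-by-case verification of the folded Serre relations, especially for $G_2$, together with the precise bookkeeping that sorts each error term into $\varepsilon\cdot{}_{\BA}\BU_q^{-,\s}$ or into the ideal $J$, which is exactly what makes the simultaneous reduction modulo $\varepsilon$ and quotient by $J$ unavoidable.
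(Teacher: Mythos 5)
There is a fatal error at the very first step: you define $\Phi(\ul f_\eta)$ to be the class of the orbit \emph{sum} $\sum_{i\in\eta}f_i$ in $\BV_q = {}_{\BA'}\BU_q^{-,\s}/J$. But $J$ is by definition the set of elements $\sum_{0\le k<\ve}\s^k(x)$ with $x\in{}_{\BA'}\BU_q^-$, so for any orbit $\eta$ of size $\ve$ one has $\sum_{i\in\eta}f_i=\sum_{0\le k<\ve}\s^k(f_{i_0})\in J$, and your $\Phi$ kills the generator $\ul f_\eta$ for every non-singleton orbit. (Already for $X=A_3$, $\ul X=B_2$, you would get $\Phi(\ul f_{\ul 2})=\pi(f_2+f_{2'})=0$.) Such a map cannot be surjective onto $\BV_q$, whose $\BA'$-basis $\{E(\ul\Bc,\ul\Bh)\}$ is indexed by all of $\BN^{\ul\nu}$, so the whole construction collapses. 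The correct generator image, which the paper uses, is the orbit \emph{product}: $\ul f_\eta^{(a)}\mapsto \pi\bigl(\prod_{i\in\eta}f_i^{(a)}\bigr)$. This is also forced by weights ($\ul f_\eta$ must go to weight $-\sum_{i\in\eta}\a_i$) and by the divided-power compatibility $([a]^!_q)^{|\eta|}=[a]^!_{q_\eta}$ in $\BF[q,q^{-1}]$, which is exactly what makes the integral forms match; with a sum neither of these works. Note also that the Serre-relation check reduces to rank $2$ (types $A_1\times A_1$, $A_2$, $B_2$, $G_2$), not to the full folded types $B_n,C_n,F_4,G_2$.

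If you replace the sum by the product, the remaining architecture you describe does parallel the paper's proof (Proposition 1.10, Theorem 1.14, Proposition 1.20, Corollary 1.21): verify the folded Serre relations modulo $\ve$ and modulo $J$; use (iii) for ${}_{\BA}\BU_q^-$ to see that the $\s$-fixed PBW monomials descend to an $\BA'$-basis of $\BV_q$; use (iii$'$) to get $\ul\SX_{\ul\Bh}\subset{}_{\BA}\ul\BU_q^-$ and match the two PBW families under $\Phi$; then descend from $\BA'$ to $\BA$. Two further points deserve more care than you give them. First, ``a direct PBW comparison of commutation relations'' hides the real work: one must prove $\pi(R_w(\wt f_\eta))=\Phi(\ul T_w(\ul f_\eta))$ for the braid operators $R_\eta=\prod_{i\in\eta}T_i$, which the paper establishes by explicit rank-$2$ computations (Lemma 1.13) followed by an induction on $l(w)$. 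Second, after matching bases you still need the spanning statement ${}_{\BA'}\wt{\ul\BU}_q^-={}_{\BA'}\ul\BU_q^-$, obtained by varying $\ul\Bh$ so that each $\eta$ occurs first; your lattice argument for the final descent to $\BA$ (Nakayama at $\ve$ plus control at the other primes) is a plausible alternative to the paper's $\ve$-adic completion argument, but as stated the step ``rules out torsion at primes other than $\ve$'' is only an assertion and would need to be made precise.
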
 

By Theorem 0.4, one can define the signed basis for $\ul\BU^-_q$ by assuming (iii$\,'$).
But in the case of $G_2$, we have a more precise result (Proposition 1.23), namely 

\begin{prop} 
Let $\ul\BU_q^-$ be of type $G_2$. Then the ambiguity of the sign can be removed in 
the signed basis, hence {\rm (ii)} of {\rm (0.1.1)} holds for $\ul\BU_q^-$. 
\end{prop}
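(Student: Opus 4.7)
The plan is to exploit the fact that $G_2$ arises as $\Fg^{\s}$ for $\Fg$ of type $D_4$ under triality, so that $\ve = 3$ and $\BA' = \BF_3[q,q\iv]$. The key observation is that in $\BF_3$ one has $1 \ne -1$, so a sign in $\BZ$ is determined by its reduction modulo $3$; hence it suffices to pin down the signs in the signed basis of $\ul\BU_q^-$ after reduction mod $3$. The natural tool for this is the isomorphism ${}_{\BA'}\ul\BU_q^- \simeq \BV_q$ supplied by Theorem 0.4.

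I would first invoke property (ii) of (0.1.1) for $\BU_q^-$ of type $D_4$. Since $D_4$ is simply laced, this reduces to the rank-two case $A_2$ and follows by a direct computation on PBW-basis elements, with no sign twist. Consequently, the image of $\SX_{\tilde\Bh}$ in $\SL_\ZZ(\infty)/q\SL_\ZZ(\infty)$ is a basis that is independent of $\tilde\Bh$. Choosing $\tilde\Bh$ to be a $\s$-adapted lift of a reduced expression $\Bh$ of $w_0$ in $G_2$, the action of $\s$ permutes the PBW-basis elements, and the $\s$-fixed subset $\SX_{\tilde\Bh}^{\s}$ gives, modulo $q$, a subset of this basis that is independent of $\tilde\Bh$ as $\Bh$ varies over the (two) reduced expressions of $w_0$ in $G_2$.

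Next I would compare with the PBW basis $\SX_\Bh$ of $\ul\BU_q^-$ by transferring through Theorem 0.4. The isomorphism ${}_{\BA'}\ul\BU_q^- \simeq \BV_q$ identifies the mod-$3$ reduction of $\SX_\Bh$ with the image of $\SX_{\tilde\Bh}^{\s}$ in $\BV_q$, and by the previous paragraph this image is independent of $\Bh$ modulo $q$. For any two reduced expressions $\Bh, \Bh'$ of $w_0$, the signed-basis property granted by Theorem 0.4 yields matching elements with $b^{\Bh}_{\la} \equiv c\, b^{\Bh'}_{\la'} \pmod{q\SL_\ZZ(\infty)}$ for some $c \in \{\pm 1\}$; reducing to $\BV_q$ forces $c \equiv 1 \pmod 3$, hence $c = 1$. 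This gives property (ii) for $\ul\BU_q^-$.

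The main obstacle is establishing the precise matching used above: that under the isomorphism of Theorem 0.4 the mod-$3$ reduction of $\SX_\Bh$ coincides, as a subset of $\BV_q$ with no latent sign twist, with the image of $\SX_{\tilde\Bh}^{\s}$. This requires tracing, in the construction underlying Theorem 0.4, how a root vector of $\ul\BU_q^-$ is obtained from a $\s$-invariant combination of root vectors of $\BU_q^-$, and how this construction survives the quotient by $J$. Since the only non-trivial braid move to verify is the length-$6$ one relating the two reduced expressions of the $G_2$ longest element, the identification can in the worst case be closed by a single finite explicit verification.
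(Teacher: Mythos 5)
Your argument is correct and is essentially the paper's: both reduce modulo $3$ through the isomorphism ${}_{\BA'}\ul\BU_q^- \simeq \BV_q$, use the transfer from the $\s$-fixed part of the $D_4$ picture (where there is no sign ambiguity) to see that the resulting mod-$3$ basis is independent of the reduced expression, and then conclude that the sign $a = \pm 1$ in the signed-basis relation satisfies $a \equiv 1 \pmod 3$, hence $a = 1$ since $-1 \ne 1$ in $\BF = \BZ/3\BZ$. The only cosmetic difference is that the paper packages the independence statement as the uniqueness of the mod-$3$ canonical basis (Proposition 1.17) rather than as the mod-$q$ independence of PBW images, and the ``matching'' you identify as the main obstacle is precisely Theorem 1.14(i), which the paper establishes through the rank-two computations of Lemma 1.13.
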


(0.4.1) gives a surjective map ${}_{\BA'}\BU_q^{-,\s} \to {}_{\BA'}\ul\BU_q^-$ 
combined with the natural surjection ${}_{\BA'}\BU_q^{-,\s} \to \BV_q$.  
This map is compatible with PBW-bases, hence induces a natural map 
$\BB^{\s} \to \ul\BB$, which is shown to be bijective (see Remark 1.24). 
Thus we can recover Lusztig's bijection $\bB^{\s} \isom \ul\bB$ by 
an elementary method. 

\para{0.6.}
In Beck and Nakajima [BN], PBW-bases were constructed for 
the affine quantum enveloping algebras $\BU_q^-$.  They showed that 
an analogous property of (iii$'$) holds for those PBW-basis, and 
that of (iii) holds if the corresponding diagram $X$ is simply laced.  
We apply the previous discussion 
to the case where $X$ is simply laced of type $A_{2n+1}^{(1)}$ ($n \ge 1$), 
$D_n^{(1)}$ ($n \ge 4$), $E_6^{(1)}$ with $\ve = 2$, and $D_4^{(1)}$ with 
$\ve = 3$.  Then $\ul X$ is twisted affine 
of type $D^{(2)}_{n+2}$, $A^{(2)}_{2n-3}$, $E_6^{(2)}$ and $D_4^{(3)}$, 
respectively (under the notation in [Ka, 4.8]). We have (Corollary 2.17),

\begin{thm}  
Assume that $\ul X$ is twisted of type $D^{(2)}_n$, $A^{(2)}_{2n-1}$, $E^{(2)}_6$ 
or $D^{(3)}_4$.   Then {\rm (iii)} holds for $\ul\BU_q^-$.  
Moreover the surjective map ${}_{\BA'}\BU_q^{-,\s} \to {}_{\BA'}\ul\BU_q^-$
gives a natural bijection $\BB^{\s} \isom \ul\BB$. 
\end{thm}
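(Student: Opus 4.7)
The plan is to bootstrap Theorem 0.4 from the finite to the affine setting by plugging in the results of Beck--Nakajima. First, on the simply-laced side I fix a $\s$-compatible choice of Beck--Nakajima data: a $\s$-stable (quasi-)translation element of the extended affine Weyl group, and $\s$-stable choices of real root vectors $E_\b$ and imaginary generators $\psi_{i,m}$ indexed by $i\in I$ and $m\ge 1$. With such a choice, $\s$ acts on the PBW-basis $\SX_\Bh$ of ${}_\BA\BU_q^-$ simply by permuting the real-root factors along the $\s$-orbits on $\D_\re^+$ and by permuting the imaginary indices $i\in I$ according to $\s:I\to I$. In parallel I fix a compatible reduced-expression-type datum $\ul\Bh$ for $\ul X$, so that Beck--Nakajima's PBW-basis $\ul\SX_{\ul\Bh}$ of $\ul\BU_q^-$ is indexed by the $\s$-orbits on the labels of $\SX_\Bh$.

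Next, I invoke the two inputs from [BN]: property (iii) of (0.1.1) holds for ${}_\BA\BU_q^-$ in the simply-laced affine types $A_{2n+1}^{(1)}$, $D_n^{(1)}$, $E_6^{(1)}$, $D_4^{(1)}$, so that $\SX_\Bh$ is an $\BA$-basis of ${}_\BA\BU_q^-$; and the weaker property (iii$'$) holds for ${}_\BA\ul\BU_q^-$ in the corresponding twisted affine types. These are precisely the hypotheses of Theorem 0.4.

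With this in hand I rerun the proof of Theorem 0.4 in the affine setting. The subalgebra ${}_\BA\BU_q^{-,\s}$ and its reduction ${}_{\BA'}\BU_q^{-,\s}$ are formed exactly as in Section 1, and so is the two-sided ideal $J\subset {}_{\BA'}\BU_q^{-,\s}$ of norm elements $\sum_{0\le i<\ve}\s^i(x)$. The $\s$-equivariance of $\SX_\Bh$ set up above furnishes $\BV_q={}_{\BA'}\BU_q^{-,\s}/J$ with an $\BA'$-basis naturally indexed by $\s$-orbits of PBW labels, which matches the labelling of $\ul\SX_{\ul\Bh}$; the argument of Proposition 1.20 then produces an $\BA'$-algebra isomorphism ${}_{\BA'}\ul\BU_q^-\isom\BV_q$, i.e.\ the affine analogue of (0.4.1). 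Comparing with the PBW images of $\ul\SX_{\ul\Bh}$ supplied by [BN] upgrades (iii$'$) to (iii) for $\ul\BU_q^-$. Finally, the surjection ${}_{\BA'}\BU_q^{-,\s}\twoheadrightarrow{}_{\BA'}\ul\BU_q^-$ is compatible with PBW labellings, and the reasoning recalled in Remark 1.24 shows that the induced map $\BB^{\s}\to\ul\BB$ is a bijection.

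The main obstacle is handling the imaginary part of the PBW basis, which is the feature genuinely new to the affine case. Concretely, one must verify that the $\s$-fixed combinations of the imaginary generators $\psi_{i,m}$ descend modulo $J$ to the imaginary generators $\ul\psi_{\eta,m}$ of ${}_{\BA'}\ul\BU_q^-$ for $\eta\in\ul I$, and that the corresponding commutation relations match after folding. For $\ve=2$ this follows from the explicit formulas of [BN, \S3] together with the observation that, for a real root $\b$, the sum $\b+\s(\b)$ (or $\b$ itself when $\s(\b)=\b$) corresponds under folding to a root of $\ul X$. For $D_4^{(3)}$ with $\ve=3$ the verification is analogous but requires tracking three $\s$-conjugates simultaneously, and it is here that the restriction to $\BA'=\BA/\ve\BA$ is essential: the various $\s$-symmetrisations represent the expected elements of ${}_{\BA'}\ul\BU_q^-$ only after reduction modulo $\ve$. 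Once this identification is established, the rest of the argument is formal from Theorem 0.4 and Remark 1.24.
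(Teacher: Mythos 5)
Your proposal is correct and follows essentially the same route as the paper: fix $\s$-compatible Beck--Nakajima data (the $\s$-stable element $\xi=\sum_i\w_i$ and the induced doubly infinite sequences), feed [BN]'s properties (iii) for the simply-laced affine algebra and (iii$'$) for the twisted one into the machinery of Section 1 to get ${}_{\BA'}\ul\BU_q^-\isom\BV_q$, lift to $\BA$ by the $\ve$-adic argument of Corollary 1.21, and deduce the bijection $\BB^{\s}\isom\ul\BB$ as in Remark 1.24. You also correctly isolate the one genuinely new ingredient, namely that the $\s$-symmetrised imaginary generators $\prod_{i\in\eta}\wt\psi_{i,k}$ descend modulo $J$ to $\wt{\ul\psi}_{\eta,k|\eta|}$ (and likewise for the $\wt P$'s and Schur polynomials), which is exactly the content of the paper's Lemma 2.15.
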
  

\remark{0.8.}
Assume that $\Fg$ is an affine Lie algebra, and $\Fg_0$ the associated 
finite dimensional subalgebra of $\Fg$. We consider the automorphism 
$\s : \Fg \to \Fg$. In order to apply the construction of PBW-basis in [BN]
to our $\s$-setting, we need to assume that $\s$ leaves $\Fg_0$ invariant. 
Then $\Fg^{\s}$ is necessarily twisted affine type.  Our discussion can 
not cover the case where $\Fg^{\s}$ is untwisted type.  

\para{0.9.}
As mentioned in 0.3, Lusztig has given a canonical bijection 
between the set of $\s$-stable canonical bases of $\BU_q^-$ and 
the set of canonical bases of $\ul\BU_q^-$. A closely related problem 
for crystal bases  was also studied by many researchers, such as  
Naito and Sagaki [NS], Savage [S].  However those results are concerned 
with the level of the parametrization, since there exists no
direct relationship between $\BU_q^{\s}$ and $\ul\BU_q^-$.     
The main observation in our work is that if we replace $\BA = \BZ[q,q\iv]$
by $\BA' = (\BZ/\ve\BZ)[q,q\iv]$, we obtain a natural surjective map 
from ${}_{\BA'}\BU_q^{-,\s}$ to ${}_{\BA'}\ul\BU_q^-$ as $\BA'$-algebras.
This has an advantage that we can compare directly the algebra structure of 
$\ul\BU_q^-$ and of $\BU_q^{-,\s}$, not only the correspondence of bases.  
For example, the following is an easy consequence of our results. 
(Notations are as in Section 1 for the Dynkin case. A similar result 
also holds for the affine case.)
\begin{thm}  
Let $b(\ul\Bc, \ul\Bh)$ be a canonical basis of $\ul\BU_q^-$, and 
$b(\Bc, \Bh)$ the corresponding $\s$-stable canonical basis of $\BU_q^-$.
We write them as 
\begin{align*}
b(\ul\Bc, \ul\Bh) &= 
   L(\ul\Bc, \ul\Bh) + \sum_{\ul\Bd > \ul \Bc}a_{\ul\Bd}L(\ul\Bd, \ul\Bh),  \\
b(\Bc, \Bh) &= L(\Bc, \Bh) + \sum_{\Bd' > \Bc}a'_{\Bd'}L(\Bd', \Bh), 
\end{align*}
with $a'_{\Bd'}, a_{\ul\Bd} \in q\BZ[q]$. 
If $L(\Bd', \Bh)$ is the $\s$-stable PBW-basis 
corresponding to $L(\ul\Bd, \ul\Bh)$, then we have 
$a'_{\Bd'} \equiv a_{\ul\Bd} \pmod \ve$.  
\end{thm}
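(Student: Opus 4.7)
The plan is to read off the congruence by comparing the two expansions in the reduction modulo $\ve$, using the isomorphism ${}_{\BA'}\ul\BU_q^- \simeq \BV_q$ of Theorem 0.4. Since Theorem 0.4 asserts that the surjection ${}_{\BA'}\BU_q^{-,\s} \to {}_{\BA'}\ul\BU_q^-$ is compatible with PBW-bases (i.e.\ it sends each $\s$-fixed PBW-basis element $L(\Bd', \Bh) \in {}_{\BA'}\BU_q^{-,\s}$ to the corresponding $L(\ul\Bd, \ul\Bh) \in {}_{\BA'}\ul\BU_q^-$, where $\ul\Bd$ is the parameter of $\Bd'$ under the bijection induced by $\s$), the congruence $a'_{\Bd'} \equiv a_{\ul\Bd} \pmod \ve$ should fall out of matching the images of the two triangular expansions.

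The first step is to pass from ${}_{\BA}\BU_q^-$ to ${}_{\BA'}\BU_q^-$ by reducing coefficients modulo $\ve$. Because $b(\Bc, \Bh)$ is $\s$-stable and its leading term $L(\Bc, \Bh)$ is $\s$-stable (the parameter $\Bc$ corresponds to an $\s$-fixed $\ul\Bc$), the correction $\sum_{\Bd' > \Bc} a'_{\Bd'} L(\Bd', \Bh)$ is itself $\s$-stable. Grouping the sum by $\s$-orbits on the index set, for each orbit of length $\ve$ (i.e.\ $\Bd'$ not $\s$-fixed) all the coefficients in the orbit coincide, so the orbit-contribution has the shape
\[
a'_{\Bd'}\bigl(L(\Bd', \Bh) + \s L(\Bd', \Bh) + \cdots + \s^{\ve-1} L(\Bd', \Bh)\bigr),
\]
which lies in the ideal $J$ by definition. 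The orbits of length $1$ (i.e.\ $\s$-fixed $\Bd'$) contribute $a'_{\Bd'} L(\Bd', \Bh)$, which remains nonzero in $\BV_q$.

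The second step is to project to $\BV_q$ and transport to ${}_{\BA'}\ul\BU_q^-$ via the isomorphism of Theorem 0.4. The image of $b(\Bc, \Bh)$ in ${}_{\BA'}\ul\BU_q^-$ becomes
\[
L(\ul\Bc, \ul\Bh) + \sum_{\ul\Bd > \ul\Bc} \bar a'_{\Bd'}\, L(\ul\Bd, \ul\Bh),
\]
where $\bar a'_{\Bd'}$ denotes the reduction of $a'_{\Bd'}$ modulo $\ve$ and $\Bd'$ is the $\s$-fixed representative above $\ul\Bd$. On the other hand, by the construction (cf.\ Remark 1.24) the image of $b(\Bc, \Bh)$ is $b(\ul\Bc, \ul\Bh)$, whose own expansion in the underlined PBW-basis reads $L(\ul\Bc, \ul\Bh) + \sum_{\ul\Bd > \ul\Bc} a_{\ul\Bd}\, L(\ul\Bd, \ul\Bh)$. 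By uniqueness of the expansion in the PBW-basis of ${}_{\BA'}\ul\BU_q^-$, one concludes $\bar a'_{\Bd'} = \bar a_{\ul\Bd}$, i.e.\ $a'_{\Bd'} \equiv a_{\ul\Bd} \pmod \ve$.

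The main delicate point is not the algebra computation above but the bookkeeping that makes it meaningful: namely, that the surjection $\pi\colon{}_{\BA'}\BU_q^{-,\s} \to \BV_q \simeq {}_{\BA'}\ul\BU_q^-$ really sends the $\s$-fixed PBW-basis to the underlined PBW-basis in an order-preserving manner for the comparison orderings used to define the triangular expansions, and that the induced map on canonical bases is precisely the bijection $\BB^{\s} \isom \ul\BB$. Once these compatibilities (supplied by Theorem 0.4 and Remark 1.24) are in hand, the result is immediate; the analogous argument in the affine case works identically, replacing Theorem 0.4 by its affine counterpart (Theorem 0.7) and using the PBW-bases of Beck--Nakajima.
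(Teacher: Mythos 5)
Your argument is correct and is essentially the paper's own (the paper states Theorem 0.10 as "an easy consequence" of the machinery in 1.16--1.24, which is exactly the chain you use): reduce mod $\ve$, observe that the $\s$-stable expansion groups into orbits whose non-fixed parts lie in $J$, project to $\BV_q$, transport along $\Phi$, and identify the image with $b(\ul\Bc,\ul\Bh)$ via the uniqueness/characterization of Proposition 1.17 before comparing PBW coefficients. The compatibilities you flag as the delicate points (order-preservation of the lexicographic orderings under (1.11.2) and the identification of the induced map on canonical bases with the bijection of Remark 1.24) are precisely what Theorem 1.14, Proposition 1.17 and 1.22 supply, so nothing is missing.
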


Some examples of Theorem 0.10 for small rank cases were 
computed in [MNZ]. 

\par\medskip
This research has grown up from the question,  
concerning the elementary construction of canonical bases, posed by 
H. Nakajima in his lecture note [N] on the lectures at Sophia University, 2006. 
The authors are grateful to him for his helpful suggestions.  

\par\bigskip
\section{PBW-bases and canonical bases}

\para{1.1.}
In this paper, we understand that a Cartan datum is a pair 
$X = (I, (\ , \ ))$, where 
$(\ ,\ )$ is a symmetric bilinear form 
on $\bigoplus_{i \in I}\BQ\a_i$ (a finite dimensional vector space 
over $\BQ$ with the basis $\{ \a_i\}$ indexed by $I$) such that 
$(\a_i, \a_j) \in \BZ$, satisfying the property
\par
\begin{itemize}
\item
$(\a_i,\a_i) \in 2\BZ_{> 0}$ for any $i \in I$, 
\item
$\frac{2(\a_i,\a_j)}{(\a_i,\a_i)} \in \BZ_{\le 0}$ for any $i \ne j$ in $I$.
\end{itemize}

The Cartan datum $X$ is called simply laced if $(\a_i, \a_j) \in \{ 0,-1\}$ for 
any $i \ne j$ in $I$, and $(\a_i,\a_i) = 2$ for any $i \in I$.   
The Cartan datum  $X$ determines a graph with the vertex set $I$.  
If the associated graph is connected, $X$ is said to be irreducible. 
Put $a_{ij} = 2(\a_i, \a_j)/(\a_i,\a_i)$ for any $i,j \in I$. 
The matrix $(a_{ij})$ is called the Cartan matrix.  
\par
In the case where the bilinear form is positive definite, $X$ is called finite type.
In that case, the associated graph is a Dynkin diagram.  In the case where 
the bilinear form is positive semi-definite, $X$ is called affine type.  In that case, 
the associated graph is a Euclidean diagram.  In this paper, we are concerned with 
$X$ of finite type or affine type. 

\para{1.2.} 
Let $X = (I, (\ ,\ ))$ be a simply laced Cartan datum, and 
let $\s : I \to I$ be a permutation such that 
$(\s(\a_i), \s(\a_j)) = (\a_i,\a_j)$ for any $i, j \in I$.   
Let $\ul I$ be the set of orbits of $\s$ on $I$. 
We assume that $\s$ is admissible, namely for each orbit $\eta \in \ul I$, 
$(\a_i, \a_j) = 0$ for any $i \ne j$ in $\eta$.  
\par
We define a symmetric bilinear form $(\ ,\ )_1$ on 
$\bigoplus_{\eta \in \ul I}\BQ \a_{\eta}$ by 
\begin{equation*}
(\a_{\eta}, \a_{\eta'})_1 = \begin{cases}
                               2|\eta|  &\quad\text{ if } \eta = \eta', \\
               - |\{ (i, j) \in \eta \times \eta' \mid (\a_i, \a_j) \ne 0\}|
                           &\quad\text{ if } \eta \ne \eta'.
                             \end{cases}                               
\end{equation*}
It is easy to see that $\ul X = (\ul I, (\ ,\ )_1)$ defines a Cartan datum. 

\para{1.3.}
Let $I = \{1,2, \dots, 2n -1\}$ for $n \ge 1$.  For $i, j \in I$, 
we put $(\a_i, \a_j) = 2$ if $i = j$, $(\a_i,\a_j) = -1$ if 
$i - j = \pm 1$, and $(\a_i, \a_j) = 0$ otherwise. 
Then $(I, (\ ,\ ))$ is a simply laced irreducible Cartan datum of type $A_{2n -1}$.
We define a permutation $\s : I \to I$ by $\s(i) = 2n -i$ for all $i$. 
Then $\s$ satisfies the condition in 1.2.  We can identify $\ul I$ with 
the set $\{ \ul 1, \dots, \ul n \}$, where $\ul i = \{ i, 2n - i\}$ for 
$1 \le i \le n-1$ and $\ul n = \{ n \}$. 
Then $(\ul I, (\ ,\ )_1)$ is the Cartan datum of type $B_n$. 

\para{1.4.}
Let $I = \{ 1, 2, 2',2''\}$.  
We define a permutation 
$\s : I \to I$ of order 3 
by $\s(1) = 1$ and $\s  : 2 \mapsto 2' \mapsto 2'' \mapsto 2$.
The set $\ul I$ of orbits of $\s$ in $I$ is given by 
$\ul I = \{ \ul 1, \ul 2\}$, where $\ul 1 = \{ 1 \}$ and 
$\ul 2 = \{ 2, 2', 2'' \}$. We define a symmetric bilinear form 
on $\bigoplus_{i \in I}\BQ\a_i$ by
\begin{equation*}
(\a_i, \a_j) = \begin{cases}
                   2  &\quad\text{ if } i = j, \\
                   -1 &\quad\text{ if $i \in \ul 1, j \in \ul 2$
                             or $i \in \ul 2, j \in \ul 1$}, \\
                   0  &\quad\text{ if } i, j \in \ul 2, i \ne j.
               \end{cases} 
\end{equation*}
Then $(I, (\ ,\ ))$ gives the Cartan datum of type $D_4$. 
$\s : I \to I$ satisfies the condition in 1.2, and $(\ul I, (\ ,\ )_1)$
gives the Cartan datum of type $G_2$. 

\para{1.5.}
Let $q$ be an indeterminate, and for an integer $n$, 
a positive integer $m $, put 
\begin{equation*}
[n]_q = \frac{q^n - q^{-n}}{q - q\iv}, \quad [m]_q^! = \prod_{i = 1}^m[i]_q, 
         \quad [0]!_q = 1. 
\end{equation*} 
For each $i \in I$, put $q_i = q^{(\a_i,\a_i)/2}$, and consider $[n]_{q_i}$, etc.  
by replacing $q$ by $q_i$ in the above formulas. 
Let $\BU_q^-$ be the negative part of the quantum enveloping algebra $\BU_q$ 
associated to a Cartan datum $X = (I, (\ ,\ ))$.  Hence $\BU_q^-$ is 
an associative algebra over $\BQ(q)$ with generators $f_i$ ($i \in I$)
satisfying the fundamental relations 
\begin{equation*}
\tag{1.5.1}
\sum_{k = 0}^{1-a_{ij}}(-1)^kf_i^{(k)}f_jf_i^{(1- a_{ij} - k)} = 0
\end{equation*}
for any $i \ne j \in I$, where $f_i^{(n)} = f_i^n/[n]^!_{q_i}$ for 
a non-negative integer $n$. 
\par
We now assume that the Cartan datum $X$ is simply-laced.
Then $[n]_{q_i} = [n]_q$ for any $i \in I$. 
Let $\s : I \to I$ be the automorphism as in 1.2.  
Then $\s$ induces an algebra automorphism $\s : \BU_q^- \isom \BU_q^-$ 
by $f_i \to f_{\s(i)}$. 
We denote by $\BU_q^{-,\s}$ the subalgebra of $\BU_q^-$ consisting of $\s$-fixed 
elements. 
Let $\BA = \BZ[q,q\iv]$, and ${}_{\BA}\!\BU_q^-$ be the $\BA$-subalgebra of 
$\BU_q^-$ generated by $f_i^{(a)}$ for $i \in I$ and $a \in \BN$. 
Then $\s$ stabilizes ${}_{\BA}\!\BU_q^-$, and we can define ${}_{\BA}\!\BU_q^{-,\s}$ 
the subalgebra of ${}_{\BA}\!\BU_q^-$ consisting of $\s$-fixed elements. 
\par
Let $\ul X = (\ul I, (\ ,\ )_1)$ be the Cartan datum obtained from $\s$ as in 1.2.
We denote by $\ul{\BU}_q^-$ the negative part of the quantum enveloping algebra associated to
$\ul X$, namely, $\ul{\BU}_q^-$ is the $\BQ(q)$-algebra generated by 
$\ul f_{\eta}$ with $\eta \in \ul I$ satisfying a similar relation as in (1.5.1).
\par
Let $\ve$ be the order of $\s$ (here we assume tht $\ve = 2$ or 3), and let 
$\BF = \BZ/\ve\BZ$ be the finite field of $\ve$-elements. 
Put $\BA' = \BF[q,q\iv]$, and consider the $\BA'$-algebra
\begin{equation*}
\tag{1.5.2}
{}_{\BA'}\BU_q^{-,\s} = {}_{\BA}\BU_q^{-,\s}\otimes_{\BA}\BA'
                      \simeq {}_{\BA}\BU_q^{-,\s}/\ve({}_{\BA}\BU_q^{-,\s}).
\end{equation*}    
Let $J$ be the $\BA'$-submodule of ${}_{\BA'}\BU_q^{-,\s}$ consisting of 
elements of the form $\sum_{0 \le i < \ve}\s^i(x)$ for $x \in {}_{\BA'}\BU_q^-$.  
Then $J$ is a two-sided ideal of ${}_{\BA'}\BU_q^{-,\s}$, and we denote by 
$\BV_q$ the quotient algebra ${}_{\BA'}\BU_q^{-,\s}/J$. 
Let $\pi : {}_{\BA'}\BU_q^{-,\s} \to \BV_q$ be the natural map.  
\par
Let $\ul\BU_q^-$ be as before. 
We can define ${}_{\BA}\ul\BU_q^-$ and ${}_{\BA'}\ul\BU_q^-$ similarly to 
${}_{\BA}\BU_q^-$ and ${}_{\BA'}\BU_q^-$. 

\para{1.6.}
In the rest of this section, we assume that $X$ is of finite type. 
Let $W$ be the Weyl group associated to the Cartan datum $X$, with 
simple reflections $\{ s_i \mid i \in I\}$. Let $l : W \to \BN$ 
be the standard length function of $W$ relative to the generators 
$s_i \ (i \in I)$. Let $w_0$ be the unique longest element in $W$ with 
respect to $l$, and put $\nu = l(w_0)$. 
Let $\ul W$ be the Weyl group associated to the Cartan datum $\ul X$, with 
simple reflections $\{ \ul s_{\eta} \mid \eta \in \ul I\}$. 
Then $\ul l, \ul w_0, \ul \nu$ with respect to $\ul X$ 
are defined similarly to $l, w_0, \nu$.  
For any $\eta \in \ul I$, let $w_{\eta}$ be the product of 
$s_i$ for $i \in \eta$ (note, by our assumption, that such $s_i$ are mutually 
commuting).  Then $\ul W$ can be identified with the subgroup of $W$ 
generated by $\{ w_{\eta} \mid \eta \in \ul I\}$ under the correspondence 
$\ul s_{\eta} \lra w_{\eta}$. 
The map $s_i \mapsto s_{\s(i)}$ defines an automorphism $\s : W \to W$, 
and $\ul W$ coincides with the subgroup $W^{\s} = \{ w \in W \mid \s(w) = w\}$ of 
$W$ under the above identification. 
We have $w_0 = \ul w_0$, and if $\ul w_0 = \ul s_{\eta_1}\cdots \ul s_{\eta_{\ul \nu}}$
is a reduced expression of $\ul w_0$,
then $w_0 = w_{\eta_1}\cdots w_{\eta_{\ul \nu}}$, which  satisfies the relation 
$\sum_{k = 1}^{\ul \nu} l(w_{\eta_k}) = \nu$.  
Thus if we write $w_{\eta} = \prod_{i \in \eta}s_i$ for any $\eta \in \ul I$, 
$w_0 = w_{\eta_1}\cdots w_{\eta_{\ul \nu}}$ induces a reduced expression of $w_0$, 
\begin{equation*}
\tag{1.6.1}
w_0 =  
  (\prod_{k_1 \in \eta_1}s_{k_1})\cdots (\prod_{k_{\ul \nu} \in \eta_{\ul \nu}}s_{k_{\ul \nu}})
     = s_{i_1}\cdots s_{i_{\nu}}.
\end{equation*}
We write $\ul \Bh = (\eta_1, \dots, \eta_{\ul \nu})$ and 
$\Bh = (i_1, \dots, i_{\nu})$. Note that $\Bh$ is determined from $\ul \Bh$ by choosing 
the expression $w_{\eta} = s_{k_1}\cdots s_{k_{|\eta|}}$ for each $\eta$.   

\para{1.7.}
For any $i \in I$ the braid group action $T_i : \BU_q \to \BU_q$ is defined 
as in [L4, \S 39] (denoted by $T''_{s_i,1}$ there). 
Let $\Bh = (i_1, \dots, i_{\nu})$ be a sequence such that 
$w_0 = s_{i_1}\cdots s_{i_{\nu}}$
is a reduced expression. 
For $\Bc = (c_1, \dots, c_{\nu}) \in \BN^{\nu}$, put
\begin{equation*}
\tag{1.7.1}
L(\Bc, \Bh) = f_{i_1}^{(c_1)}T_{i_1}(f_{i_2}^{(c_2)})\cdots 
                      (T_{i_1}\cdots T_{i_{{\nu}-1}})(f_{\nu}^{(c_{\nu})}).
\end{equation*}
Then $\{ L(\Bc, \Bh) \mid \Bc \in \BN^{\nu} \}$ gives a PBW-basis of 
$\BU_q^-$, which we denote by $\SX_{\Bh}$. 
Now assume given $\s : I \to I$ as in 1.2.
Then $\s\circ T_i \circ \s\iv = T_{\s(i)}$ and $T_iT_j = T_jT_i$ if $i,j \in \eta$. 
Hence one can define $R_{\eta} = \prod_{i \in \eta}T_i$ for each $\eta \in \ul I$, 
and $R_{\eta}$ commutes with $\s$.  
\par
We consider the braid group action $\ul T_{\eta} : \ul\BU_q \to \ul\BU_q$. 
Let $\ul\Bh = (\eta_1, \dots, \eta_{\ul \nu})$ be a sequence for $\ul w_0$. 
For any $\ul\Bc = (\g_1, \dots \g_{\ul \nu}) \in \BN^{\ul \nu}$, 
$L(\ul\Bc, \ul\Bh)$ is defined in a similar way as in  
(1.7.1), 
\begin{equation*}
\tag{1.7.2}
L(\ul\Bc, \ul\Bh) = \ul f_{\eta_1}^{(\g_1)}\ul T_{\eta_1}(\ul f_{\eta_2}^{(\g_2)})\cdots 
         (\ul T_{\eta_1}\cdots \ul T_{\eta_{\ul \nu -1}})(\ul f_{\eta_{\ul \nu}}^{(\g_{\ul \nu})}).
\end{equation*}
Then $\{ L(\ul\Bc, \ul\Bh) \mid \ul\Bc \in \BN^{\ul \nu}\}$ gives a PBW-basis of 
$\ul\BU_q^-$, which we denote by $\ul\SX_{\ul\Bh}$. 
\par
Now assume that $\Bh$ is obtained from $\ul\Bh$ as in 1.6. 
Then $L(\Bc, \Bh)$ can be written as follows.
For $k = 1, \dots, \ul \nu$, let $I_k$ be the interval in $[1, \nu]$ 
corresponding to $\eta_k$ so that $w_{\eta_k} = \prod_{j \in I_k}s_{i_j}$
in the expression of $w_0$ in (1.6.1).  
Put $F_{\eta_k}(\Bc) = \prod_{j \in I_k}f_{i_j}^{(c_j)}$ for each $k$. 
Then we have
\begin{equation*}
\tag{1.7.3}
L(\Bc, \Bh) = F_{\eta_1}(\Bc)R_{\eta_1}(F_{\eta_2}(\Bc))\cdots
                  (R_{\eta_1}\cdots R_{\eta_{\ul \nu -1}})(F_{\eta_{\ul \nu}}(\Bc)).
\end{equation*}
In particular, the following holds.
\begin{lem}   
Under the notation as above, 
\begin{enumerate}
\item 
\ $\s$ gives a permutation of the PBW-basis $\SX_{\Bh}$, 
namely $\s(L(\Bc, \Bh)) = L(\Bc', \Bh)$ for some $\Bc' \in \BN^{\nu}$. 
$L(\Bc, \Bh)$ is $\s$-invariant if and only if 
$c_j$ is constant for $j \in I_k$ for $k = 1, \dots, \ul \nu$.  
\item
For each $\ul\Bc \in \BN^{\ul\nu}$, let $\Bc \in \BN^{\nu}$ 
be the unique element such that $c_j = \g_k$ for each $j \in I_k$.
Then $L(\ul\Bc, \ul\Bh) \mapsto L(\Bc, \Bh)$ gives a bijection 
\begin{equation*}
\ul\SX_{\ul\Bh} \isom \SX_{\Bh}^{\s},
\end{equation*} 
where $\SX^{\s}_{\Bh}$ is the set of $\s$-stable PBW-basis in $\SX_{\Bh}$. 
\end{enumerate}
\end{lem}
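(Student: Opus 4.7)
The plan is to derive both parts directly from the factorization (1.7.3), together with the admissibility of $\s$ and the relation $\s \circ R_\eta = R_\eta \circ \s$ noted in 1.7. I would first fix $\ul\Bh = (\eta_1,\dots,\eta_{\ul\nu})$ and the induced $\Bh = (i_1,\dots,i_\nu)$, and for each $k$ let $I_k \subset [1,\nu]$ be the interval such that $\{i_j : j \in I_k\} = \eta_k$ (with the chosen ordering). Because $\s$ permutes $\eta_k$ cyclically, it induces a cyclic permutation $\pi_k$ of $I_k$ characterized by $\s(i_j) = i_{\pi_k(j)}$ for $j \in I_k$.

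For part (i), I would apply $\s$ to the right-hand side of (1.7.3). Since $R_\eta$ commutes with $\s$, the result equals
\begin{equation*}
\s(L(\Bc,\Bh)) = \s(F_{\eta_1}(\Bc))\,R_{\eta_1}(\s(F_{\eta_2}(\Bc)))\cdots (R_{\eta_1}\cdots R_{\eta_{\ul\nu-1}})(\s(F_{\eta_{\ul\nu}}(\Bc))).
\end{equation*}
Then I would analyze $\s(F_{\eta_k}(\Bc)) = \prod_{j \in I_k} f_{\s(i_j)}^{(c_j)} = \prod_{j \in I_k} f_{i_{\pi_k(j)}}^{(c_j)}$. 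Admissibility ensures that the factors $f_{i_j}^{(c_j)}$ for $j \in I_k$ pairwise commute, so after reindexing this equals $\prod_{j \in I_k} f_{i_j}^{(c'_j)} = F_{\eta_k}(\Bc')$, where $c'_j = c_{\pi_k^{-1}(j)}$. Thus $\s(L(\Bc,\Bh)) = L(\Bc',\Bh)$, which proves that $\s$ permutes $\SX_{\Bh}$. It follows that $L(\Bc,\Bh)$ is $\s$-fixed iff $c_j = c_{\pi_k^{-1}(j)}$ for all $j \in I_k$ and all $k$; since $\pi_k$ is a single cycle on $I_k$ of length $|\eta_k|$, this is equivalent to $\Bc$ being constant on each $I_k$.

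For part (ii), the rule $\ul\Bc \mapsto \Bc$ with $c_j = \g_k$ for $j \in I_k$ is a bijection from $\BN^{\ul\nu}$ onto the set of $\Bc \in \BN^\nu$ which are constant on each $I_k$. By part (i) the latter parametrizes $\SX_{\Bh}^\s$, while the former parametrizes $\ul\SX_{\ul\Bh}$; the map $L(\ul\Bc,\ul\Bh) \mapsto L(\Bc,\Bh)$ is therefore a bijection $\ul\SX_{\ul\Bh} \isom \SX_{\Bh}^\s$.

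The only delicate point is the reindexing step in part (i): one must be sure that the $f_{i_j}^{(c_j)}$ for $j \in I_k$ may be freely commuted past each other before rewriting $\s(F_{\eta_k}(\Bc))$ as some $F_{\eta_k}(\Bc')$. This is precisely what the admissibility hypothesis on $\s$ (from 1.2) guarantees, and it is the only ingredient of the argument beyond the explicit factorization (1.7.3) and the compatibility of $R_\eta$ with $\s$.
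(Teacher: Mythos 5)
Your proof is correct and follows essentially the same route as the paper, which states Lemma 1.8 as an immediate consequence of the factorization (1.7.3) together with the facts that $R_{\eta}$ commutes with $\s$ and that admissibility lets the factors $f_{i_j}^{(c_j)}$ within each $I_k$ commute. You have simply written out in full the argument the paper leaves implicit.
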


\para{1.9.}
For each $\eta \in \ul I$ and $a \in \BN$, put 
$\wt f_{\eta}^{(a)} = \prod_{i \in \eta}f_i^{(a)}$. 
Since $f_i^{(a)}$ and $f_j^{(a)}$ commute each other for $i,j \in \eta$,  
we have $\wt f^{(a)}_{\eta} \in {}_{\BA}\BU_q^{-,\s}$. We denote its image 
in ${}_{\BA'}\BU_q^{-,\s}$ also by $\wt f^{(a)}_i$.  
Thus we can define $g_{\eta}^{(a)} \in \BV_q$ by 
\begin{equation*}
\tag{1.9.1}
g^{(a)}_{\eta} = \pi(\wt f_{\eta}^{(a)}). 
\end{equation*}
In the case where $a = 1$, we put 
$\wt f_{\eta}^{(1)} = \wt f_{\eta} = \prod_{i \in \eta}f_i$ 
and $g^{(1)}_{\eta} = g_{\eta}$. 
Recall that ${}_{\BA'}\ul\BU_q^-$ is generated by 
$\ul f^{(a)}_{\eta}$ for $\eta \in \ul I$ and $a \in \BN$. 
We have the following result.

\begin{prop}  
The correspondence $\ul f_{\eta}^{(a)} \mapsto g_{\eta}^{(a)}$ gives rise to 
a homomorphism  $\Phi : {}_{\BA'}\ul\BU^-_q \to \BV_q$ of $\BA'$-algebras. 
\end{prop}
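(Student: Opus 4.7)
The approach is to verify that the candidate images $g_\eta^{(a)} \in \BV_q$ satisfy the defining relations of ${}_{\BA'}\ul\BU_q^-$ in the generators $\ul f_\eta^{(a)}$---namely the divided-power identities $\ul f_\eta^{(a)} \ul f_\eta^{(b)} = \binom{a+b}{a}_{q_\eta}\ul f_\eta^{(a+b)}$ together with the quantum Serre relations for $\eta \ne \eta'$. Once these are in place, the universal property of ${}_{\BA'}\ul\BU_q^-$ yields the desired $\BA'$-algebra homomorphism $\Phi$.

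First I would handle the divided-power identity. By admissibility, the $f_i$ for $i \in \eta$ pairwise commute in $\BU_q^-$, so
\begin{equation*}
\wt f_\eta^{(a)}\wt f_\eta^{(b)} = \prod_{i \in \eta} f_i^{(a)}f_i^{(b)}
= \binom{a+b}{a}_q^{|\eta|}\wt f_\eta^{(a+b)}.
\end{equation*}
Since $|\eta| \in \{1,\ve\}$ and $\ve$ is prime, the Frobenius identity $(x\pm y)^\ve \equiv x^\ve \pm y^\ve$ valid in $\BA' = \BF[q,q\iv]$ gives $(q^n-q^{-n})^\ve \equiv q^{n\ve}-q^{-n\ve}$ and $(q-q\iv)^\ve \equiv q^\ve - q^{-\ve}$, hence $[n]_q^\ve \equiv [n]_{q^\ve}$, and therefore $\binom{a+b}{a}_q^\ve \equiv \binom{a+b}{a}_{q^\ve} = \binom{a+b}{a}_{q_\eta}$ modulo $\ve$. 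Applying $\pi$ gives the required identity in $\BV_q$.

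For the Serre relations, fix $\eta \ne \eta' \in \ul I$ and put $m = 1 - \ul a_{\eta\eta'}$. I would argue by case analysis on the local configuration of $\eta,\eta'$ inside $X$. The unjoined case $(\a_\eta,\a_{\eta'})_1 = 0$ is trivial: $\wt f_\eta$ and $\wt f_{\eta'}$ literally commute in $\BU_q^-$. When $\eta$ and $\eta'$ are both $\s$-fixed singletons joined in $X$, the simply-laced Serre of $\BU_q^-$ descends verbatim. In the remaining joined cases one expands $\wt f_\eta^{(k)}\wt f_{\eta'}\wt f_\eta^{(m-k)}$ in $\BU_q^-$ using the $A_2$-Serre on each joined pair $(i,j) \in \eta \times \eta'$ together with the intra-orbit commutations. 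The resulting monomials split into two classes: (a) $\s$-invariant terms which, after applying $\pi$ and invoking the Frobenius congruence of Step~1, reassemble into the targeted vanishing sum; (b) non-$\s$-invariant terms which come packaged in full $\s$-orbits $\sum_{0 \le j < \ve}\s^j(x)$ and therefore lie in $J$, vanishing in $\BV_q$. A clean illustration is the ``interior'' case of $A_{2n-1} \to B_n$ with $\eta = \{i,2n-i\}$, $\eta' = \{i+1,2n-i-1\}$: writing $A = f_i^{(2)}f_{i+1}$, $B = f_{i+1}f_i^{(2)}$ and $A', B'$ for their $\s$-translates, the expansion of the three-term Serre collapses to $-(AB' + BA') = -(1+\s)(AB') \in J$.

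The principal obstacle is the boundary situation where $\ul X$ carries a multiple bond---the $A_3 \to B_2$ piece inside $A_{2n-1}\to B_n$, the $D_4 \to G_2$ fold, and the analogous subdiagrams in $D_n \to C_{n-1}$ and $E_6 \to F_4$. Here one direction of the Serre relation has index $m = \ve+1$ (so $m=3$ for $B_2$ and $m=4$ for $G_2$), and its lift to $\BU_q^-$ sprawls into many monomials. Verifying that the $\BV_q$-image of this expansion vanishes requires iterated application of the simply-laced Serre on each $A_2$-subpair around the central vertex, with careful tracking of coefficients via the Frobenius congruence to collapse stray $[k]_q^\ve$ factors into $[k]_{q_\eta}$; the residue is identified as a $\s$-orbit sum in $J$. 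The $G_2$ case is the most delicate, since $\ve=3$ forces the cubic Frobenius $(x-y)^3 \equiv x^3 - y^3 \pmod 3$ and a three-fold expansion.
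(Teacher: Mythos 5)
Your overall strategy is the one the paper follows: reduce to verifying that the elements $g_{\eta}^{(a)}$ satisfy the defining relations of ${}_{\BA'}\ul\BU_q^-$, handle the divided-power identity by the Frobenius congruence $([a]_q^!)^{\ve}\equiv [a]_{q^{\ve}}^!$ in $\BF[q,q\iv]$ (this is exactly the paper's proof of its relation (3.1.4)), reduce the Serre relations to the rank-two configurations $A_1\times A_1$, $A_2\times A_2\to A_2$, $A_3\to B_2$, $D_4\to G_2$, and in each case show that the cross terms in the expansion assemble into full $\s$-orbit sums lying in $J$. Your treatment of the unjoined case, the singleton case, the $A_2$ fold, and the $m=2$ direction of the multiple bonds matches the paper's computations in 3.1--3.3.

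The genuine gap is in the long-index Serre relation $m=\ve+1$ at the multiple bond, and specifically the $G_2$ relation with $m=4$ (the paper's (3.4.2)/(3.5.1)). You assert that this follows by ``iterated application of the simply-laced Serre on each $A_2$-subpair around the central vertex'' with the residue landing in $J$, but you give no actual computation, and this is precisely the route the authors state they could not carry out: the paper says the direct computation as in the $B_2$ case ``seems to be difficult'' for $G_2$ and instead proves (3.5.1) by a different mechanism, namely expanding in the PBW-basis of $\BU_q^-$ of type $D_4$ attached to the reduced word $\Bh=(2,2',2'',1,2,2',2'',1,2,2',2'',1)$ and using the Levendorskii--Soibelman commutation relations among the root vectors $f_{122'2''}$, $f_{1122'2''}$, $f_{12}f_{12'}f_{12''}$, etc., to compute $f_1^k(f_2f_{2'}f_{2''})$ modulo $J$ for $k=1,\dots,4$ and then read off the alternating sum. (The $B_2$ case with $m=3$ does succumb to the direct Serre manipulation, as in the paper's (3.3.4)--(3.3.7), so your plan is fine there.) As written, the hardest case of the proposition rests on an unverified claim, and the method you propose for it is not known to terminate in a manageable identity; you would need either to carry out that expansion in full or to import the PBW/Levendorskii--Soibelman argument to close the proof.
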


\para{1.11.}
Proposition 1.10 will be proved in Section 3. Here assuming the proposition, 
we continue the discussion. Let $\SX_{\Bh}$ be as in Lemma 1.8.
It is known that the PBW -basis $\SX_{\Bh}$ is contained in  
${}_{\BA}U_q^-$ (see Introduction). 
Thus $\s$-stable PBW-basis $L(\Bc,\Bh)$ in $\SX^{\s}_{\Bh}$ 
is contained in ${}_{\BA}\BU_q^{-,\s}$. 
By Lemma 1.8 such an $L(\Bc, \Bh)$ can be written as
\begin{equation*}
\tag{1.11.1}
L(\Bc, \Bh) = \wt f_{\eta_1}^{(\g_1)}R_{\eta_1}(\wt f_{\eta_2}^{(\g_2)})
         \cdots (R_{\eta_1}\cdots R_{\eta_{\ul \nu - 1}})
                        (\wt f_{\eta_{\ul \nu}}^{(\g_{\ul \nu})}),
\end{equation*}
where $\ul\Bc = (\g_1, \dots, \g_{\ul \nu})$ and 
\begin{equation*}
\tag{1.11.2}
\Bc = (c_1, \dots, c_{\nu}) = (\underbrace{\g_1, \dots, \g_1}_{|\eta_1|\text{-times}}, 
                  \underbrace{\g_2, \dots, \g_2}_{|\eta_2|\text{-times}}, \cdots, 
       \underbrace{\g_{\ul \nu}, \dots, \g_{\ul \nu}}_{|\eta_{\ul \nu}|\text{-times}}). 
\end{equation*}
For each $L(\Bc, \Bh) \in \SX^{\s}_{\Bh}$, put 
$E(\ul\Bc, \ul\Bh) = \pi(L(\Bc, \Bh))$ under the correspondence in 
(1.11.2). 
By Lemma 1.8 (i), any element $x \in {}_{\BA'}\BU_q^{-,\s}$ can be written as 
an $\BA'$-linear combination of $\s$-stable PBW-basis modulo $J$. 
Thus we have 
\par\medskip\noindent
(1.11.3) \ 
The set $\{ E(\ul\Bc, \ul\Bh) \mid \ul\Bc \in \BN^{\ul \nu} \}$ generates $\BV_q$ 
as $\BA'$-module. 

\para{1.12.}
It is known, for any Cartan datum $X$, 
 that there exists a canonical symmetric bilinear form 
$(\ ,\ )$ on $\BU_q^-$, which satisfies the property, 
\begin{equation*}
\tag{1.12.1}
(L(\Bc, \Bh), L(\Bc',\Bh)) = \prod_{k = 1}^{\nu}(f_{i_k}^{(c_k)}, f_{i_k}^{(c'_k)})
                           = \prod_{k = 1}^{\nu}\d_{c_k,c'_k}
                              \prod_{d = 1}^{c_k}\frac{1}{1 - q_{i_k}}
\end{equation*}
for $\Bc = (c_1, \dots, c_{\nu}), \Bc' = (c_1', \dots, c_{\nu}')$. 
In particular, $(L(\Bc, \Bh), L(\Bc', \Bh)) = 0$ if $\Bc \ne \Bc'$, and 
the form $(\ ,\ )$ is non-degenerate. 
Assume that $X$ is as in 1.2. Then $\s$ preserves the form, namely, 
$(\s(x), \s(y)) = (x,y)$ for any $x,y \in \BU_q^-$. 
\par
Let $\BF(q)$ be the field of rational functions over $\BF$, 
and put ${}_{\BF(q)}\BV_q = \BV_q\otimes_{\BA'}\BF(q)$.
Then the form $(\ ,\ )$ on $\BU_q^-$ induces a symmetric bilinear form 
on ${}_{\BF(q)}\BV_q$ (note that $(\sum_i \s^i(x), \sum_i \s^i(y)) = 0$ in $\BF(q)$).   
We have $(E(\ul\Bc, \ul\Bh), E(\ul\Bc',\ul\Bh')) = 0$ if $\ul\Bc \ne \ul\Bc'$, 
and $(E(\ul\Bc, \ul\Bh), E(\ul\Bc, \ul\Bh)) \ne 0$. 
Thus $\{ E(\ul\Bc, \ul\Bh) \mid \ul\Bc \in \BN^{\ul \nu}\}$ gives rise 
to an orthogonal basis of ${}_{\BF(q)}\BV_q$. 
\par
Put ${}_{\BF(q)}\ul\BU_q^- = {}_{\BA'}\ul\BU_q^-\otimes_{\BA'}\BF(q)$. 
We can regard $\{ L(\ul\Bc, \ul\Bh) \mid \ul\Bc \in \BN^{\ul \nu}\}$ 
as an $\BF(q)$-basis of ${}_{\BF(q)}\ul\BU_q^{-}$. 
The map $\Phi : {}_{\BA'}\ul\BU_q^- \to \BV_q$ induces an algebra homomorphism 
${}_{\BF(q)}\ul\BU_q^- \to {}_{\BF(q)}\BV_q$, which we denote also by $\Phi$.
We need a lemma.  

\begin{lem}  
Assume that $\ul X$ has rank 2, and $\ul\Bh = (\eta_1, \dots, \eta_{\ul \nu})$.
Then for $k = 1, \dots, \ul \nu$, we have
\begin{equation*}
\tag{1.13.1}
\Phi(\ul T_{\eta_1} \dots \ul T_{\eta_{k-1}}(\ul f_{\eta_k})) 
   = \pi(R_{\eta_1}\cdots R_{\eta_{k-1}}(\wt f_{\eta_k})).
\end{equation*}
\end{lem}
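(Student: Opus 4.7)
The plan is to induct on $k$. The base case $k=1$ is immediate: both sides of (1.13.1) carry the empty product of braid operators, so the claim reduces to $\Phi(\ul f_{\eta_1}) = \pi(\wt f_{\eta_1})$, which is the very definition of $\Phi$ given in Proposition~1.10.

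For the inductive step, the rank-$2$ hypothesis on $\ul X$ is decisive. Since $|\ul I|=2$, the sequence $\ul\Bh$ alternates between $\eta_1$ and $\eta_2$, and when $\ul X$ is irreducible the admissibility of $\s$ together with $\ve\in\{2,3\}$ forces $\ul X$ to be of type $B_2$ (arising from $X=A_3$, $\ve=2$) or of type $G_2$ (arising from $X=D_4$, $\ve=3$); the reducible case $A_1\times A_1$ is trivial. In both nontrivial cases $\ul\nu\le 6$, so only finitely many identities are needed. Using Lusztig's explicit formulas [L4, Ch.~39] for braid operators acting on lower-weight vectors, $\ul T_{\eta_1}\cdots\ul T_{\eta_{k-1}}(\ul f_{\eta_k})$ can be written as a specific noncommutative polynomial $P(\ul f_{\eta_1}, \ul f_{\eta_2})$ with coefficients in $\BA$. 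Because $\Phi$ is an $\BA'$-algebra homomorphism with $\Phi(\ul f_{\eta_i}) = g_{\eta_i} = \pi(\wt f_{\eta_i})$, the left-hand side of (1.13.1) equals $P(\pi(\wt f_{\eta_1}),\pi(\wt f_{\eta_2}))$.

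On the right-hand side, $R_{\eta_1}\cdots R_{\eta_{k-1}}(\wt f_{\eta_k})$ is a well-defined element of $\BU_q^-$ because the $T_i$ for $i\in\eta$ commute by admissibility. Expanding each elementary $T_i$ via the same Serre-type identities inside $\BU_q^-$ and collecting factors within each $\s$-orbit, the key claim becomes
\begin{equation*}
R_{\eta_1}\cdots R_{\eta_{k-1}}(\wt f_{\eta_k}) - P(\wt f_{\eta_1}, \wt f_{\eta_2}) \in \sum_{0\le j<\ve}\s^j\bigl({}_{\BA'}\BU_q^-\bigr).
\end{equation*}
Applying $\pi$ kills the right-hand side by the definition of $J$, so both sides of (1.13.1) reduce to $P(\pi(\wt f_{\eta_1}),\pi(\wt f_{\eta_2}))$ and agree.

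The principal obstacle is proving this congruence: one must show that the ``cross-orbit'' monomials in the expansion on the left collapse into $\s$-symmetric sums while the ``within-orbit'' monomials reproduce precisely the polynomial $P$ predicted from the $\ul\BU_q^-$-side. For $B_2$ this is a short check using the $A_3$ Serre relations and the commutativity of $T_1$ and $T_3$. For $G_2$ the triple product $R_{\ul 2}=T_2T_{2'}T_{2''}$ on the $D_4$ side yields longer expansions, and tracking the combinatorics of the $\s$-symmetrization is where the real work lies; nevertheless the computation is finite and purely mechanical, and it is precisely the passage to the quotient $\BV_q$ that makes the identification possible.
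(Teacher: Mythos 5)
Your skeleton is the same as the paper's (Section 4): fix a reduced word, express $\ul T_{\eta_1}\cdots\ul T_{\eta_{k-1}}(\ul f_{\eta_k})$ explicitly in terms of the generators, produce the corresponding element of $\BU_q^{-,\s}$, and check that it agrees with $R_{\eta_1}\cdots R_{\eta_{k-1}}(\wt f_{\eta_k})$ modulo $J$. But as written the proposal has two genuine gaps. First, the case analysis is wrong: rank $2$ and irreducibility of $\ul X$ do not force type $B_2$ or $G_2$. The type $A_2$ case occurs, with $X = A_2\times A_2$ and $\s$ interchanging the two components (the lemma assumes rank $2$ for $\ul X$, not irreducibility of $X$); the paper treats this case separately in 4.2. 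Second, and decisively, the congruence modulo $J$ that you defer as ``purely mechanical'' is the entire content of the lemma, and the mechanism you propose for it does not run as stated: $T_i$ is an automorphism of $\BU_q$, not of $\BU_q^-$ (indeed $T_i(f_i)\notin\BU_q^-$), so for the deeper root vectors one cannot simply ``expand each elementary $T_i$ via Serre-type identities inside $\BU_q^-$''.

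What the paper actually does is: (a) use the $*$-anti-automorphism to reduce to a single reduced word $\ul\Bh$ (4.1); (b) identify $R_{\eta_1}\cdots R_{\eta_{k-1}}(\wt f_{\eta_k})$ with an explicit monomial in the PBW root vectors of $\BU_q^-$ attached to the induced reduced word for $w_0$ (for $X=D_4$ these are $f_{1122'2''}$ and $f_{12}f_{12'}f_{12''}$, via (3.5.2)--(3.5.3)); (c) mirror the recursive $q$-commutator formulas (4.3.1), (4.4.1) for the $\ul f$'s by elements such as $Z_{\ul{112}} = f_1Z_{\ul{12}}-qZ_{\ul{12}}f_1$ upstairs, and expand these in the PBW basis using the Levendorskii--Soibelman commutation relations collected in 3.5 and 4.4; and (d) deal with the denominators $[2]_1,[3]_1$ occurring in (4.3.1) and (4.4.1) --- so your ``noncommutative polynomial with coefficients in $\BA$'' is not literally available --- by proving the identity after clearing them and then cancelling in ${}_{\BF(q)}\BV_q$. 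None of this is conceptually deep, but for $G_2$ it occupies several pages of computation, and until it is carried out the lemma is not proved.
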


\par\medskip
Lemma 1.13 will be proved in Section 4.  We continue the discussion 
assuming the lemma.  
By using Lemma 1.13, we can prove the following theorem.

\begin{thm}  
Let $\Bh$ and $\ul\Bh$ be as in 1.6.
\begin{enumerate}
\item 
For any $\ul\Bc \in \BN^{\ul \nu}$, 
we have $\Phi(L(\ul\Bc, \ul\Bh)) = E(\ul\Bc, \ul\Bh)$. 
\item 
$\Phi$ gives an algebra isomorphism ${}_{\BF(q)}\ul\BU_q^- \isom {}_{\BF(q)}\BV_q$.
\end{enumerate}
\end{thm}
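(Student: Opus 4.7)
The plan is to establish (i) first, after which (ii) follows by a basis comparison. Both $\Phi : {}_{\BF(q)}\ul\BU_q^- \to {}_{\BF(q)}\BV_q$ and $\pi$ are algebra homomorphisms, and by (1.7.2) the element $L(\ul\Bc, \ul\Bh)$ factors as a product of braid-twisted root vectors $\ul T_{\eta_1}\cdots \ul T_{\eta_{k-1}}(\ul f_{\eta_k}^{(\g_k)})$, with a completely parallel factorization of $L(\Bc, \Bh)$ given by (1.7.3). Hence it suffices to verify, for each $k = 1, \dots, \ul\nu$,
\begin{equation*}
\Phi\bigl(\ul T_{\eta_1}\cdots \ul T_{\eta_{k-1}}(\ul f_{\eta_k}^{(\g_k)})\bigr)
  = \pi\bigl(R_{\eta_1}\cdots R_{\eta_{k-1}}(\wt f_{\eta_k}^{(\g_k)})\bigr),
\end{equation*}
which I will call $(\star_k)$. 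Since $\ul T_\eta$ and $R_\eta$ are algebra automorphisms, they commute with formation of divided powers, so $(\star_k)$ reduces at once to the case $\g_k = 1$.

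I would prove the reduced form of $(\star_k)$ by induction on $k$. The base $k = 1$ is immediate from Proposition 1.10: $\Phi(\ul f_{\eta_1}) = g_{\eta_1} = \pi(\wt f_{\eta_1})$. For the inductive step, the element $\ul T_{\eta_1}\cdots \ul T_{\eta_{k-1}}(\ul f_{\eta_k})$ is the PBW root vector attached to the positive root $\ul s_{\eta_1}\cdots \ul s_{\eta_{k-1}}(\ul \a_{\eta_k})$; the strategy is to invoke Lemma 1.13 on the rank-$2$ subsystem that controls the final non-trivial braid move on $\ul f_{\eta_k}$, and then combine this with the inductive hypothesis on the earlier factors, propagated through the algebra-homomorphism property of $\Phi$. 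The crucial compatibility on the target side is that, by admissibility of $\s$, the operator $R_\eta = \prod_{i \in \eta}T_i$ factors into pairwise-commuting simple braid operators, so the $\BU_q^-$-side computation tracks the $\ul\BU_q^-$-side step by step under $\pi$.

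Part (ii) is then immediate. Indeed, by (i), $\Phi$ carries the PBW basis $\{ L(\ul\Bc, \ul\Bh) \}_{\ul\Bc \in \BN^{\ul\nu}}$ of ${}_{\BF(q)}\ul\BU_q^-$ onto the set $\{ E(\ul\Bc, \ul\Bh) \}_{\ul\Bc}$ in ${}_{\BF(q)}\BV_q$. By the bilinear form computation in 1.12 these images are pairwise orthogonal with non-zero self-pairings, hence linearly independent over $\BF(q)$; combined with the spanning statement in (1.11.3), they form an $\BF(q)$-basis of ${}_{\BF(q)}\BV_q$. Therefore $\Phi$ sends a basis to a basis, and so is a bijective algebra homomorphism, i.e., an algebra isomorphism.

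The step I expect to be the main obstacle is the inductive step in $(\star_k)$. Although Lemma 1.13 yields a clean rank-$2$ identity, the composite $\ul T_{\eta_1}\cdots \ul T_{\eta_{k-1}}(\ul f_{\eta_k})$ in higher rank is a polynomial in the generators $\ul f_\eta$ whose support need not lie in any single rank-$2$ subalgebra, so no literal reduction to Lemma 1.13 is available. Carrying out the induction will likely require either a careful reordering of braid moves or an appropriate choice of reduced expression $\ul\Bh$ that localizes the root-vector computation to rank-$2$ pieces one braid step at a time; the delicate point is to verify that this rank-$2$ localization can be performed while respecting the algebra map $\Phi$ at each stage.
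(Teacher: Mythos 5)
Your overall architecture matches the paper's: reduce to the case $\g_k=1$ of single root vectors, prove
$\Phi(\ul T_{\eta_1}\cdots\ul T_{\eta_{k-1}}(\ul f_{\eta_k}))=\pi(R_{\eta_1}\cdots R_{\eta_{k-1}}(\wt f_{\eta_k}))$
by induction with Lemma 1.13 as the rank-$2$ input, and then get (ii) from orthogonality of the $E(\ul\Bc,\ul\Bh)$ plus the spanning statement (1.11.3). But the inductive step, which you yourself flag as ``the main obstacle,'' is exactly the content of the proof and is left unproven; as written the proposal does not contain an argument for it, and your worry is legitimate: $\ul T_{\eta_1}\cdots\ul T_{\eta_{k-1}}(\ul f_{\eta_k})$ does not live in a rank-$2$ subalgebra, and $w'=s_{\eta_1}\cdots s_{\eta_{k-2}}$-type truncations do not localize the computation.

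The missing idea is a standard Weyl-group factorization, and it also forces a strengthening of the inductive statement. One proves, for \emph{every} $w\in\ul W$ with $w(\a_{\eta})\in\ul\vD^+$, that $\pi(R_w(\wt f_{\eta}))=\Phi(\ul T_w(\ul f_{\eta}))$, by induction on $l(w)$ (not on the position $k$ in the fixed word $\ul\Bh$). Choose $\eta'$ with $l(ws_{\eta'})=l(w)-1$; necessarily $\eta'\ne\eta$, and there is a factorization $w=w'w''$ with $w''$ in the rank-$2$ parabolic subgroup generated by $s_{\eta},s_{\eta'}$, with $l(w)=l(w')+l(w'')$ and $l(w's_{\eta})=l(w's_{\eta'})=l(w')+1$. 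Then $\ul T_{w''}(\ul f_{\eta})$ is an element of the subalgebra generated by $\ul f_{\eta},\ul f_{\eta'}$, Lemma 1.13 gives $\pi(R_{w''}(\wt f_{\eta}))=\Phi(\ul T_{w''}(\ul f_{\eta}))$, and since $w'(\a_{\eta}),w'(\a_{\eta'})\in\ul\vD^+$ and $l(w')<l(w)$, the inductive hypothesis applies to \emph{both} $\pi(R_{w'}(\wt f_{\eta}))$ and $\pi(R_{w'}(\wt f_{\eta'}))$; writing $\ul T_w(\ul f_{\eta})=\ul T_{w'}(\ul T_{w''}(\ul f_{\eta}))$ and using that $\ul T_{w'}$, $R_{w'}$ are algebra automorphisms and $\Phi$, $\pi$ are algebra homomorphisms finishes the step. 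Note that $w'$ is in general not an initial segment of $\ul\Bh$, which is precisely why the induction must be set up over all $w$ with $w(\a_{\eta})\in\ul\vD^+$ rather than over $k$. With this lemma inserted, the rest of your argument (the divided-power reduction and the basis comparison for (ii)) goes through as in the paper.
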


\begin{proof}
Since $R_{\eta}$'s satisfy the braid relation, we can define 
$R_w = R_{\eta_1}\cdots R_{\eta_k}$ for a reduced expression  
$w = \ul s_{\eta_1}\cdots \ul s_{\eta_k} \in \ul W$.  
Let $\ul \vD^+$ be the set of positive roots in 
$\bigoplus_{\eta \in \ul I}\BQ\a_{\eta}$.
We consider the following statement 
\par\medskip\noindent
(1.14.1) \ Assume that $w(\a_{\eta}) \in \vD^+$.  Then 
$\pi\bigl(R_w(\wt f_{\eta})\bigr) = \Phi(\ul T_w(\ul f_{\eta}))$.  
\par\medskip
Note that (1.14.1) certainly holds in the case where $\ul X$ has rank 2, 
in view of Lemma 1.13.  
We prove (1.14.1) by induction on $l(w)$. (1.14.1) holds 
if $l(w) = 0$.  Thus we assume that $l(w) > 0$, and choose $\eta' \in \ul I$ 
such that $l(ws_{\eta'}) = l(w)-1$. From  the assumption in (1.14.1), 
$\eta' \ne \eta$. It is known that there exists $w',w'' \in \ul W$ such that 
$w = w'w''$, which satisfy the condition 
\par\medskip
(i) \ $w''$ is contained in the subgroup of $\ul W$ generated by $s_{\eta}$ 
    and $s_{\eta'}$, 
\par
(ii) \ $l(w) = l(w') + l(w'')$, 
\par
(iii) \ $l(w's_{\eta}) = l(w') +1$, $l(w's_{\eta'}) = l(w') + 1$.
\par\medskip
By applying (1.14.1) to the case $\ul X$ has rank 2, we see that 
$\pi(R_{w''}(\wt f_{\eta})) = \Phi(\ul T_{w''}(\ul f_{\eta}))$. 
Since $w \ne w'$, we have $l(w') < l(w)$.  Also note that 
$w'(\a_{\eta}), w'(\a_{\eta'}) \in \ul\vD^+$.  Thus by induction, we have
\begin{equation*}
\pi(R_{w'}(\wt f_{\eta})) = \Phi(\ul T_{w'}(\ul f_{\eta})), \quad
\pi(R_{w'}(\wt f_{\eta'})) = \Phi(\ul T_{w'}(\ul f_{\eta'})). 
\end{equation*}
Since $R_{w}(\wt f_{\eta}) = R_{w'}R_{w''}(\wt f_{\eta})$ and 
$\ul T_w(\ul f_{\eta}) = \ul T_{w'}\ul T_{w''}(\ul f_{\eta})$, 
(1.14.1) holds for $w$.  Thus (1.14.1) is proved. 
\par
Now the claim (i) in the theorem follows from (1.14.1). 
Let $Z$ be the $\BF(q)$-subspace of ${}_{\BF(q)}\ul\BU_q^-$ 
spanned by $\{ L(\ul\Bc, \ul\Bh)\}$.  
Since $\{ E(\ul\Bc, \ul\Bh)\}$ is a basis of ${}_{\BF(q)}\BV_q$,  
$\Phi$ gives an isomorphism  $Z \isom {}_{\BF(q)}\BV_q$ by (i), 
and so $Z$ is an algebra over $\BF(q)$.   
Since $\ul f_{\eta}^{(a)} = ([a]^!_{q_{\eta}})\iv \ul f_{\eta}^a$ is 
contained in $Z$, we see that $Z = {}_{\BF(q)}\ul\BU_q^-$.  
Thus (ii) holds.  The theorem is proved.
\end{proof}

\para{1.15.}
We follow the point of view explained in Introduction.
In the simply-laced case, the properties (i), (ii) and (iii) in (0.1.1) 
are known to hold. 
Hence there exists the canonical basis 
$\{ b(\Bc, \Bh) \mid \Bc \in \BN^{\nu} \}$ in $\SL_{\BZ}(\infty)$, 
which is characterized by the following properties, 
\begin{align*}
\tag{1.15.1}
\ol {b(\Bc, \Bh)} &= b(\Bc, \Bh), \\
b(\Bc, \Bh) &\equiv L(\Bc, \Bh) \mod q\SL_{\BZ}(\infty),  
\end{align*}
where $x \mapsto \ol{x}$ is the bar involution in $\BU_q^-$. 
Note that $\{ b(\Bc, \Bh) \mid \Bc \in \BN^{\nu}\}$ is independent of the choice of 
$\Bh$, which we denote by $\BB$.
\par
We define a total order on $\BN^{\nu}$ by making use of the lexicographic 
order, i.e., for $\Bc = (c_1, \dots, c_{\nu}), \Bd = (d_1, \dots, d_{\nu}) \in \BN^{\nu}$,
$\Bc < \Bd$ if and only if there exists $k$ such that $c_i = d_i$ for $i < k$ and 
$c_k < d_k$. 
Then the second formula in (1.15.1) can be written more precisely as 
\begin{equation*}
\tag{1.15.2}
b(\Bc,\Bh) = L(\Bc,\Bh) + \sum_{\Bc < \Bd}a_{\Bd}L(\Bd,\Bh)
\end{equation*}
with $a_{\Bd} \in q\BZ[q]$. 

\para{1.16.}
We choose $\Bh$ and $\ul\Bh$ as in 1.6. Since 
$\s$ permutes the PBW-basis $L(\Bc,\Bh)$, $\s$ permutes the canonical basis $\BB$.
We denote by $\BB^{\s}$ the set of $\s$-stable canonical basis of $\BU_q^-$. 
Take $\Bb = \Bb(\Bc, \Bh) \in \BB^{\s}$.  Then $L(\Bc, \Bh)$ is $\s$-stable, and 
$\Bc$ is obtained from $\ul\Bc$ as in 1.11. Since $\Bb \in {}_{\BA}\BU_q^{-,\s}$, 
one can consider $\pi(\Bb)$. 
Then we can write as 
\begin{equation*}
\tag{1.16.1}
\pi(\Bb) = E(\ul\Bc, \ul\Bh) + \sum_{\ul\Bc < \ul\Bd}a_{\ul\Bd}E(\ul\Bd, \ul\Bh)
\end{equation*}  
with $a_{\ul\Bd} \in q\BF[q]$.  
The total order $\ul\Bc < \ul\Bd$ on $\BN^{\ul\nu}$ is defined similarly.  
The bar involution can be defined on $\BV_q$, and the map $\pi$ is compatible 
with those bar involutions. Thus we have 
\begin{equation*}
\tag{1.16.2}
\ol{\pi(\Bb)} = \pi(\Bb).  
\end{equation*}
Let $\wt\SL_{\BF}(\infty)$ be the $\BF[q]$-submodule of $\BV_q$ generated by 
$E(\ul\Bc, \ul\Bh)$.  Then the set $\{ \pi(\Bb) \mid \Bb \in \BB^{\s}\}$ 
gives rise to an $\BF[q]$-basis of $\wt\SL_{\BF}(\infty)$ satisfying the properties 
(1.16.1) and (1.16.2). Note that the set $\{ \pi(\Bb) \mid \Bb \in \BB^{\s}\}$ 
is characterized by those properties, and this set is independent of the 
choice of $\ul\Bh$, which we call the canonical basis of $\BV_q$.
\par   
Let $\ul\SL_{\BF}(\infty)$ be the $\BF[q]$-submodule of ${}_{\BF(q)}\ul\BU_q^-$ 
generated by $\{ L(\ul\Bc, \ul\Bh) \mid \ul\Bc \in \BN^{\ul \nu} \}$.   
We have the following result.
\begin{prop}  
There exists a unique $\BF[q]$-basis 
$\{ \Bb(\ul\Bc, \ul\Bh) \mid \ul\Bc \in \BN^{\ul \nu}\}$ in $\ul\SL_{\BF}(\infty)$
satisfying the following properties, 
\begin{align*}
\tag{1.17.1}
\ol{\Bb(\ul\Bc, \ul\Bh)} &= \Bb(\ul\Bc, \ul\Bh), \\
\Bb(\ul\Bc, \ul\Bh) &= L(\ul\Bc, \ul\Bh) + \sum_{\ul\Bc < \ul\Bd}a_{\ul\Bd}L(\ul\Bd, \ul\Bh), 
\qquad (a_{\ul\Bd} \in q\BF[q]).
\end{align*}
Moreover, the set $\{ \Bb(\ul\Bc, \ul\Bh)\}$ is independent of the choice of $\ul\Bh$,  
and $\ul\SL_{\BF}(\infty)$ does not depend on the choice of $\ul\Bh$. 
\end{prop}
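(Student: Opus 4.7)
The plan is to transport the canonical basis $\{ \pi(\Bb) \mid \Bb \in \BB^{\s} \}$ of $\BV_q$ described in 1.16 back to ${}_{\BF(q)}\ul\BU_q^-$ through the isomorphism $\Phi$ of Theorem 1.14(ii). Concretely, for each $\ul\Bc \in \BN^{\ul\nu}$, let $\Bc \in \BN^{\nu}$ be the $\s$-stable vector attached to $\ul\Bc$ via (1.11.2), let $\Bb \in \BB^{\s}$ denote the unique $\s$-invariant canonical basis element of $\BU_q^-$ whose leading PBW term is $L(\Bc, \Bh)$, and define
\[
\Bb(\ul\Bc, \ul\Bh) := \Phi^{-1}\bigl(\pi(\Bb)\bigr).
\]
The remainder of the argument consists in checking that this definition has the required properties and that it is forced.

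Before verifying (1.17.1), I would first check that $\Phi$ intertwines the bar involutions on both sides. The bar involution $x \mapsto \ol x$ on $\BU_q^-$ fixes each $f_i$ and sends $q \mapsto q\iv$, so it commutes with $\s$, stabilizes ${}_{\BA}\BU_q^{-,\s}$ and the ideal $J$, and therefore descends to $\BV_q$ through $\pi$. On generators, $\ol{\ul f_\eta} = \ul f_\eta$ and $\ol{g_\eta} = \ol{\pi(\wt f_\eta)} = \pi\bigl(\prod_{i \in \eta} \ol{f_i}\bigr) = g_\eta$, so the two bar involutions agree on the $\Phi$-images of generators, hence globally. Combined with Theorem 1.14(i), which gives $\Phi(L(\ul\Bc, \ul\Bh)) = E(\ul\Bc, \ul\Bh)$, this shows that $\Phi$ restricts to an $\BF[q]$-isomorphism $\ul\SL_{\BF}(\infty) \isom \wt\SL_{\BF}(\infty)$ carrying the PBW basis of one onto that of the other. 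Applying $\Phi\iv$ to the properties (1.16.1) and (1.16.2) enjoyed by $\pi(\Bb)$ then yields (1.17.1) for $\Bb(\ul\Bc, \ul\Bh)$ term by term, once one observes that on the $\s$-stable subset of $\BN^{\nu}$ the lexicographic order of 1.15 restricts to the lexicographic order on $\BN^{\ul\nu}$.

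Uniqueness becomes an immediate transfer: if $\Bb$ and $\Bb'$ both satisfy (1.17.1) in $\ul\SL_{\BF}(\infty)$, then $\Phi(\Bb)$ and $\Phi(\Bb')$ are bar-invariant elements of $\wt\SL_{\BF}(\infty)$, each having an expansion $E(\ul\Bc, \ul\Bh) + \sum_{\ul\Bc < \ul\Bd} a_{\ul\Bd} E(\ul\Bd, \ul\Bh)$ with $a_{\ul\Bd} \in q\BF[q]$; by the characterization of the canonical basis of $\BV_q$ recorded in 1.16 they coincide, and so $\Bb = \Bb'$. Independence of $\ul\Bh$ also transfers: $\Phi$ is defined on generators and is manifestly independent of $\ul\Bh$, while the set $\{\pi(\Bb) \mid \Bb \in \BB^{\s}\}$ is independent of $\ul\Bh$ by 1.16, hence the set $\{\Bb(\ul\Bc, \ul\Bh)\}$ and its $\BF[q]$-span $\ul\SL_{\BF}(\infty)$ are both independent of $\ul\Bh$. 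I expect the main (if still routine) obstacle to be the bar-compatibility step --- specifically confirming that the bar involution genuinely descends to $\BV_q$ by preserving $J$ and that it acts correctly on $g_\eta$ --- since once this is in hand, everything else is a direct application of Theorem 1.14 and the characterization recorded in 1.16.
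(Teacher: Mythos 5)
Your proposal is correct and follows essentially the same route as the paper: the paper's proof simply observes that $\Phi$ is compatible with the bar involutions and then invokes Theorem 1.14 to transport the canonical basis $\{\pi(\Bb) \mid \Bb \in \BB^{\s}\}$ of $\BV_q$ (constructed and characterized in 1.16) back to $\ul\SL_{\BF}(\infty)$. You have merely filled in the details the paper leaves as ``clear'' and ``immediate,'' including the bar-compatibility check and the transfer of uniqueness and $\ul\Bh$-independence.
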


\begin{proof}
It is clear that the map $\Phi : {}_{\BF(q)}\ul\BU_q^- \to {}_{\BF(q)}\BV_q$ 
is compatible with the bar involutions.  
Then the proposition immediately follows from  Theorem 1.14. 
\end{proof}

\para{1.18.}
For any $\ul X$, we consider the following statements corresponding to 
(iii) and (iii$'$) in (0.1.1).
\par\medskip\noindent
(1.18.1) \ PBW-basis $\ul\SX_{\ul\Bh}$ gives an $\BA$-basis of 
${}_{\BA}\ul\BU_q^-$.
\par\medskip\noindent
(1.18.2) \ Any element $L(\ul\Bc, \ul\Bh) \in \ul\SX_{\ul\Bh}$ is 
contained in ${}_{\BA}\ul\BU_q^-$. 
\par\medskip

As was explained in Introduction, the proof of (1.18.1) is reduced to the
case of rank 2, namely the case of type $B_2$ and $G_2$, and in that case, 
(1.18.2) was proved by Lusztig [L1] and Xi [X1]. 
In any case, the computation in the case of $G_2$ is not easy.  
(1.18.2) can be proved by computing the commutation 
relations of root vectors, which is relatively easy compared to (1.18.1). 
\par
In the discussion below, we only assume that (1.18.2) 
holds for ${}_{\BA}\ul\BU_q^-$, 
and will prove that (1.18.1) holds for ${}_{\BA}\ul\BU_q^-$. 

\para{1.19.}
We return to our original setting, and consider the map 
$\Phi : {}_{\BA'}\ul\BU_q^- \to \BV_q$. 
By (1.18.2), the PBW-basis $\ul\SX_{\ul\Bh} = \{ L(\ul\Bc, \ul\Bh)\}$ 
is contained in ${}_{\BA'}\ul\BU_q^-$. 
Since $\{ E(\ul\Bc, \ul\Bh)\}$ is an $\BA'$-basis of $\BV_q$, we see that 
$\Phi$ is surjective, by Theorem 1.14 (i). 
Let ${}_{\BA'}\wt{\ul\BU}^-_q$ be the $\BA'$-module generated by 
$\{ L(\ul\Bc, \ul\Bh) \mid \ul\Bc \in \BN^{\ul \nu}\}$. 
Again by Theorem 1.14, ${}_{\BA'}\wt{\ul\BU}_q^-$ is an $\BA'$-submodule of 
${}_{\BF(q)}\ul\BU_q^-$, which is independent of the choice of $\ul\Bh$.     
We show that 
\begin{equation*}
\tag{1.19.1}
{}_{\BA'}\wt{\ul\BU}_q^- = {}_{\BA'}\ul\BU_q^-.
\end{equation*} 
By (1.18.2), we know that ${}_{\BA'}\wt{\ul\BU}_q^- \subset {}_{\BA'}\ul\BU_q^-$.
On the other hand, for each $\eta \in \ul I$, one can find a sequence 
$\ul\Bh = (\eta_1, \dots, \eta_{\ul N})$ such that $\eta_1 = \eta$. This implies 
that ${}_{\BA'}\wt{\ul\BU}_q^-$ is invariant under the left multiplication 
by $\ul f^{(a)}_{\eta}$. Since this is true for any $\eta$, we see that 
${}_{\BA'}\ul\BU_q^-$ is contained in ${}_{\BA'}\wt{\ul\BU}_q^-$.  Thus (1.19.1) holds. 
\par
Summing up the above arguments, we have the following integral form of 
Theorem 1.14.

\begin{prop} 
Assume that $(1.18.2)$ holds for ${}_{\BA}\ul\BU_q^-$.  Then $\Phi$ induces 
an isomorphism ${}_{\BA'}\ul\BU_q^- \simeq \BV_q$. 
In particular, the PBW-basis $\ul\SX_{\ul\Bh}$ gives an $\BA'$-basis of 
${}_{\BA'}\ul\BU_q^-$. 
\end{prop}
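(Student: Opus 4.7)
The plan is to refine Theorem~1.14 from the generic level over $\BF(q)$ to the integral level over $\BA'$, leveraging the PBW-integrality hypothesis (1.18.2). The argument essentially packages the reasoning of 1.19.

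First I would prove surjectivity of $\Phi$. By (1.18.2), each $L(\ul\Bc, \ul\Bh)$ lies in ${}_{\BA}\ul\BU_q^-$, so its reduction lies in ${}_{\BA'}\ul\BU_q^-$. Theorem~1.14~(i) gives $\Phi(L(\ul\Bc,\ul\Bh)) = E(\ul\Bc,\ul\Bh)$, and by (1.11.3) together with the $\BF(q)$-linear independence of the $E(\ul\Bc,\ul\Bh)$ observed in 1.12, these elements form an $\BA'$-basis of $\BV_q$. Hence $\Phi : {}_{\BA'}\ul\BU_q^- \to \BV_q$ is surjective.

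Next I would introduce the $\BA'$-submodule ${}_{\BA'}\wt{\ul\BU}_q^-$ of ${}_{\BF(q)}\ul\BU_q^-$ generated by $\{L(\ul\Bc,\ul\Bh) \mid \ul\Bc \in \BN^{\ul\nu}\}$. By (1.18.2) this module is contained in ${}_{\BA'}\ul\BU_q^-$, and by Theorem~1.14~(ii) the restriction of $\Phi$ to it is an isomorphism onto $\BV_q$. Consequently ${}_{\BA'}\wt{\ul\BU}_q^-$ is $\BA'$-free on $\ul\SX_{\ul\Bh}$, and since $\BV_q$ is manifestly independent of $\ul\Bh$, so is ${}_{\BA'}\wt{\ul\BU}_q^-$.

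The remaining step is the reverse inclusion ${}_{\BA'}\ul\BU_q^- \subseteq {}_{\BA'}\wt{\ul\BU}_q^-$. It suffices to check that ${}_{\BA'}\wt{\ul\BU}_q^-$ is stable under left multiplication by each generator $\ul f_\eta^{(a)}$. For any $\eta \in \ul I$ one can choose a reduced expression of $\ul w_0$ beginning with $\ul s_\eta$; with such $\ul\Bh$ we have $\ul f_\eta^{(a)} = L((a,0,\dots,0),\ul\Bh)$, and left multiplication of $L(\ul\Bc,\ul\Bh)$ by $\ul f_\eta^{(a)}$ returns an $\BA'$-scalar multiple (a Gaussian binomial coefficient) of $L((a+\g_1,\g_2,\dots,\g_{\ul\nu}),\ul\Bh)$, hence stays inside ${}_{\BA'}\wt{\ul\BU}_q^-$. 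By the $\ul\Bh$-independence established above, stability holds regardless of which PBW-presentation one starts from. Since the $\ul f_\eta^{(a)}$ generate ${}_{\BA'}\ul\BU_q^-$ as an $\BA'$-algebra, the inclusion follows, giving ${}_{\BA'}\wt{\ul\BU}_q^- = {}_{\BA'}\ul\BU_q^-$; thus $\Phi$ is an isomorphism and $\ul\SX_{\ul\Bh}$ is an $\BA'$-basis. I do not anticipate any substantial obstacle beyond Theorem~1.14: the only delicate point is that the $\ul\Bh$-independence from Step~2 must be invoked before the left-multiplication closure argument can be applied to an arbitrary PBW-basis.
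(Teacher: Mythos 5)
Your proposal is correct and follows essentially the same route as the paper's own argument in 1.19: surjectivity of $\Phi$ from (1.18.2) and Theorem~1.14(i), then the identification ${}_{\BA'}\wt{\ul\BU}_q^- = {}_{\BA'}\ul\BU_q^-$ via $\ul\Bh$-independence and stability under left multiplication by the $\ul f_\eta^{(a)}$. The only difference is that you make the left-multiplication step slightly more explicit (the Gaussian binomial coefficient), which is a welcome clarification but not a new idea.
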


As a corollary, we have

\begin{cor}  
Assume that $(1.18.2)$ holds for ${}_{\BA}\ul\BU_q^-$.  Then 
$(1.18.1)$ also holds. 
\end{cor}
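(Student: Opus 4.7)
My plan is to mirror the argument of (1.19), carried out now at the integral level $\BA$ rather than $\BA'$. Set $M_{\ul\Bh} = \sum_{\ul\Bc} \BA \cdot L(\ul\Bc, \ul\Bh)$; the inclusion $M_{\ul\Bh} \subseteq {}_{\BA}\ul\BU_q^-$ is exactly (1.18.2), and the PBW basis is $\BA$-linearly independent because it is already a $\BQ(q)$-basis of ${}_{\BQ(q)}\ul\BU_q^-$. So it suffices to prove $M_{\ul\Bh} = {}_{\BA}\ul\BU_q^-$.

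The key mechanism of (1.19) carries over to $\BA$ unchanged: if $\ul\Bh$ begins with $\eta$, the factorization $L(\ul\Bc, \ul\Bh) = \ul f_\eta^{(c_1)} \cdot (\mathrm{rest})$ gives $\ul f_\eta^{(a)} L(\ul\Bc, \ul\Bh) = \binom{a+c_1}{a}_{q_\eta} L((a+c_1, c_2, \ldots), \ul\Bh) \in M_{\ul\Bh}$, since the quantum binomial already lies in $\BA$. So $M_{\ul\Bh}$ is stable under left multiplication by $\ul f_\eta^{(a)}$ provided $\ul\Bh$ begins with $\eta$. If one knew further that $M_{\ul\Bh}$ is independent of the choice of $\ul\Bh$ over $\BA$, then starting from $1 = L((0, \ldots, 0), \ul\Bh) \in M_{\ul\Bh}$ and iterating these left multiplications (switching $\ul\Bh$ to suit each generator) would give ${}_{\BA}\ul\BU_q^- \subseteq M_{\ul\Bh}$, finishing the proof.

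The hard step is the $\BA$-independence of $M_{\ul\Bh}$ from $\ul\Bh$. At the level $\BA'$ this is immediate from Proposition 1.20 via the identification ${}_{\BA'}\wt{\ul\BU}_q^- \simeq \BV_q$ supplied by $\Phi$. Over $\BA$ there is no direct analogue of $\BV_q$: if one quotients ${}_{\BA}\BU_q^{-,\s}$ by the $\BA$-ideal spanned by the orbit sums $\sum_i \s^i(x)$, the resulting module imposes $\ve \cdot L(\Bc, \Bh) = 0$ on each $\s$-fixed PBW element (since $\sum_i \s^i(L(\Bc, \Bh)) = \ve L(\Bc, \Bh)$ in that case), collapsing back to $\BV_q$. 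To sidestep this, I would compare transition matrices: since $X$ is simply laced, property (iii) of (0.1.1) holds for ${}_{\BA}\BU_q^-$, so the transition matrix between PBW bases of $\BU_q^-$ attached to reduced expressions $\Bh, \Bh'$ of $w_0$ (chosen compatibly with $\ul\Bh, \ul\Bh'$ via (1.6)) has entries in $\BA$. Restricting to the $\s$-stable rows and columns and applying $\pi$ gives, on the $\BV_q$-side, an $\BA$-integral transition between the $E(\ul\Bc, \ul\Bh)$'s. The heart of the argument is then to use $\Phi^{-1}$ together with the explicit formulas (1.7.2) and (1.11.1) to promote these $\BA$-coefficients into $\BA$-integral transition coefficients for the $L(\ul\Bc, \ul\Bh)$'s in ${}_{\BA}\ul\BU_q^-$ themselves, rather than merely for their $\BA'$-reductions. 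Bridging this $\BA'$-to-$\BA$ gap is precisely where the technical content of the corollary resides.
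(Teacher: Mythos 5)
Your proposal does not close the argument, and you say so yourself: everything is reduced to the claim that the $\BA$-span $M_{\ul\Bh}$ of $\{L(\ul\Bc,\ul\Bh)\}$ is independent of $\ul\Bh$, and the final paragraph concedes that you cannot bridge the ``$\BA'$-to-$\BA$ gap.'' That concession is accurate, and the route you sketch for it cannot work as stated: restricting the $\BA$-integral transition matrix of $\BU_q^-$ to $\s$-stable rows and columns and applying $\pi$ only produces relations among the $E(\ul\Bc,\ul\Bh)$ in $\BV_q$, i.e.\ statements modulo $\ve$; pulling these back through $\Phi\iv$ lands you in ${}_{\BA'}\ul\BU_q^-$, and no amount of bookkeeping recovers the $\BA$-coefficients that were destroyed by reducing mod $\ve$. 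So the heart of the corollary is missing from the proposal.

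The paper's proof avoids the independence-from-$\ul\Bh$ issue entirely and works with a single fixed $\ul\Bh$; the idea you are missing is to \emph{iterate} the mod-$\ve$ statement rather than try to lift it in one step. By Proposition 1.20 any $x \in {}_{\BA}\ul\BU_q^-$ can be written as $x = \sum_{\ul\Bc} a_{\ul\Bc} L(\ul\Bc,\ul\Bh) + \ve x_1$ with $a_{\ul\Bc} \in \BA$ and $x_1 \in {}_{\BA}\ul\BU_q^-$; applying the same to $x_1$, $x_2$, $\dots$ and passing to the $\ve$-adic completion ${}_{\BA}\wh{\ul\BU}_q^- = \varprojlim {}_{\BA}\ul\BU_q^-/\ve^n({}_{\BA}\ul\BU_q^-)$ exhibits $x$ as a $\BZ_{\ve}[q,q\iv]$-linear combination of the $L(\ul\Bc,\ul\Bh)$. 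Since the same $x$ is a $\BQ(q)$-linear combination of the PBW-basis (it is a $\BQ(q)$-basis of $\ul\BU_q^-$), comparison of the two expansions forces the coefficients into $\BZ_{\ve}[q,q\iv] \cap \BQ(q)$, whence into $\BA$, giving (1.18.1). Your observation that $M_{\ul\Bh}$ is stable under left multiplication by $\ul f_{\eta}^{(a)}$ when $\ul\Bh$ begins with $\eta$ is correct but is the content of (1.19.1) at the $\BA'$-level, already consumed in proving Proposition 1.20; it is not the ingredient needed here.
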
 

\begin{proof}
Let ${}_{\BA}\wh{\ul\BU}_q^-$ be the inverse limit of 
${}_{\BA}\ul\BU_q^-/\ve^n({}_{\BA}\ul\BU_q^-)$.  Then ${}_{\BA}\wh{\ul\BU}_q^-$ 
has a natural structure of the module over 
$\displaystyle\BZ_{\ve}[q,q\iv] = \lim_{\leftarrow}\BA/\ve^n\BA$, 
where $\BZ_{\ve}$ is the ring of $\ve$-adic integers. 
We have a natural embedding ${}_{\BA}\ul\BU_q^- \subset {}_{\BA}\wh{\ul\BU}_q^-$. 
Now take $x \in {}_{\BA}\ul\BU_q^-$. (1.18.1) shows that $x$ can be written as 
a linear combination of PBW-basis with coefficients in $\BA$ modulo $\ve({}_{\BA}\ul\BU_q^-)$. 
We regard $x$ as an element in ${}_{\BA}\wh{\ul\BU}_q^-$.  Then 
$x$ can be written as a linear combination of PBW-basis with coefficients 
in $\BZ_{\ve}[q,q\iv]$.   
On the other hand, we know that $x$ is a linear combination of PBW-basis with 
coefficients in $\BQ(q)$. 
Thus those coefficients belong to $\BA = \BZ[q,q\iv]$, and we obtain (1.18.1). 
\end{proof}

\para{1.22.}
We assume that (1.18.2) holds for $\ul\BU_q^-$.  Then by Corollary 1.21, 
we have
\par\medskip\noindent
(1.22.1) \ In $\ul\BU_q^-$, $\ol{L(\ul\Bc, \ul\Bh)}$ is a linear combination of
various $L(\ul\Bd, \ul\Bh)$ with coefficients in $\BA$. 
\par\medskip
Then by [L3, Lemma 24.2.1], one can define a basis 
$\{ \Bb(\ul\Bc, \ul\Bh) \mid \ul\Bc \in \BN^{\ul\nu}\}$
of $\ul\BU_q^-$, satisfying the properties 
\begin{align*}
\tag{1.22.2}
\ol{\Bb(\ul\Bc, \ul\Bh)} &= \Bb(\ul\Bc, \ul\Bh), \\
\Bb(\ul\Bc,\ul\Bh) &= L(\ul\Bc, \ul\Bh) + \sum_{\ul\Bc < \ul\Bd}a_{\ul\Bd}L(\ul\Bd, \ul\Bh), 
\qquad (a_{\ul\Bd} \in q\BZ[q]).
\end{align*}
In this construction, we cannot give the independence of the basis 
$\{ \Bb(\ul\Bc, \ul\Bh)\}$ from $\ul\Bh$.  But by using the almost orthogonality 
of PBW-basis (1.12.1), one can prove a weaker property, namely, 
the independence from $\ul\Bh$, up to sign (see [L3, Thm. 14.2.3]); 
if we fix $\Bh, \Bh'$, then for any $\ul\Bc$, there exists a unique $\ul\Bc'$
such that   
\begin{equation*}
\tag{1.22.3}
\Bb(\ul\Bc, \ul\Bh) = \pm \Bb(\ul \Bc', \ul\Bh').
\end{equation*}   

We denote by $\ul\BB$ the set of canonical basis $\{ b(\ul\Bc, \Bh) \}$ 
in $\ul\BU_q^-$.
On the other hand, let $\ul\BB'$ be the canonical basis in 
${}_{\BA'}\ul\BU_q^-$ given 
in Proposition 1.17. We temporally write them as $\{ \Bb'(\ul\Bc, \ul\Bh) \}$. 
Then the image of $\Bb(\ul\Bc, \ul\Bh)$ under the
natural map ${}_{\BA}\ul\BU_q^- \to {}_{\BA'}\ul\BU_q^-$ coincides with 
$\Bb'(\ul\Bc, \ul\Bh)$, and this gives a bijection $\ul\BB \isom \ul\BB'$.  
In the case where $\ve = 2$, this does not give a new information on the sign 
of $\Bb(\ul\Bc, \ul\Bh)$.
But in the case where $\ve = 3$, we have the following result.

\begin{prop}  
Assume that $\ve = 3$, and  $\ul X$ is of type $G_2$. 
Then the canonical basis $\{ \Bb(\ul\Bc, \ul\Bh) \mid \ul\Bc \in \BN^{\ul\nu}\}$ 
is independent of the choice 
of $\ul\Bh$, namely, if we fix $\ul\Bh, \ul\Bh'$, then for any $\ul\Bc$, there exists 
a unique $\ul\Bc'$ such that 
\begin{equation*}
\Bb(\ul\Bc, \ul\Bh) = \Bb(\ul\Bc', \ul\Bh').
\end{equation*}  
\end{prop}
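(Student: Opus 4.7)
The plan is to deduce sign rigidity over $\BA = \BZ[q,q\iv]$ from the already-established $\ul\Bh$-independence (without sign) over $\BA' = \BF[q,q\iv]$ with $\BF = \BZ/3\BZ$, using the arithmetic fact that $-1 \not\equiv 1$ in $\BF$. This is the precise reason the case $\ve = 3$ behaves better than $\ve = 2$.

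First, fix two reduced sequences $\ul\Bh, \ul\Bh'$ for $\ul w_0$. By (1.22.3), every $\ul\Bc \in \BN^{\ul\nu}$ determines a unique $\ul\Bc'$ and a sign $\epsilon \in \{\pm 1\}$ with
\[
\Bb(\ul\Bc, \ul\Bh) = \epsilon\, \Bb(\ul\Bc', \ul\Bh') \qquad \text{in } {}_{\BA}\ul\BU_q^-,
\]
and the task is to show $\epsilon = +1$. I would then apply the natural reduction ${}_{\BA}\ul\BU_q^- \to {}_{\BA'}\ul\BU_q^-$. As recalled in 1.22, this map carries $\Bb(\ul\Bc, \ul\Bh)$ to the canonical basis element $\Bb'(\ul\Bc, \ul\Bh)$ of ${}_{\BA'}\ul\BU_q^-$ constructed in Proposition 1.17, because the image inherits the two characterizing properties (1.17.1) from (1.22.2) simply by reducing coefficients modulo $3$.

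Next, Proposition 1.17 asserts that $\{\Bb'(\ul\Bd, \ul\Bh')\}_{\ul\Bd}$ is genuinely independent of the reduced expression, so there is a unique $\ul\Bc''$ with $\Bb'(\ul\Bc, \ul\Bh) = \Bb'(\ul\Bc'', \ul\Bh')$ (equality, no sign). Reducing the displayed equation and comparing with this equality --- using that the $\Bb'(\ul\Bd, \ul\Bh')$ form an $\BF[q]$-basis --- forces $\ul\Bc'' = \ul\Bc'$ and $\epsilon \equiv 1 \pmod 3$. Since $-1 \not\equiv 1$ in $\BF = \BZ/3\BZ$, this rules out $\epsilon = -1$ and leaves $\epsilon = +1$, as required.

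The main technical point to verify is the compatibility between the canonical bases over $\BA$ and over $\BA'$ that was asserted in 1.22. This rests on (1.18.2) for $G_2$ (so that $\ul\SX_{\ul\Bh} \subset {}_{\BA}\ul\BU_q^-$) together with Proposition 1.20 (so that the PBW expansion in (1.22.2) reduces to the PBW expansion in (1.17.1)); once this identification is in hand, the rest of the argument is just the observation that $\BF^{\times}$ has no element of order $2$. This is precisely where the hypothesis $\ve = 3$ enters, and the same argument collapses for $\ve = 2$, consistent with the fact that genuine sign ambiguity persists in the $B_2$ case.
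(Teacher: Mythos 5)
Your argument is correct and is essentially identical to the paper's proof: both reduce the sign relation from (1.22.3) modulo $3$ via ${}_{\BA}\ul\BU_q^- \to {}_{\BA'}\ul\BU_q^-$, invoke the uniqueness and $\ul\Bh$-independence of the canonical basis of ${}_{\BA'}\ul\BU_q^-$ from Proposition 1.17 to conclude $\epsilon \equiv 1 \pmod 3$, and then use $-1 \not\equiv 1$ in $\BZ/3\BZ$ to force $\epsilon = 1$. The compatibility point you flag (that reduction of $\Bb(\ul\Bc,\ul\Bh)$ gives $\Bb'(\ul\Bc,\ul\Bh)$) is exactly the identification the paper records in 1.22.
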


\begin{proof}
By (1.22.3), we have $\Bb'(\ul\Bc, \ul\Bh) = a \Bb'(\ul\Bc',\ul\Bh')$ 
for some $a = \pm 1$. But by Proposition 1.17, $\Bb'(\ul\Bc, \ul\Bh)$ 
is determined uniquely as an element in $\ul\SL_{\BF}(\infty)$, which is 
independent of the choice of $\ul\Bh$.  It follows that $a \equiv 1 \mod 3$. 
This implies that $a = 1$, and the proposition is proved. 
\end{proof}

\remark{1.24.}
By Proposition 1.20, we have a natural bijection $\ul\BB' \isom \BB^{\s}$. 
By the discussion in 1.22, we have $\ul\BB \simeq \ul\BB'$. Hence 
\begin{equation*}
\tag{1.24.1}
\BB^{\s} \simeq \ul\BB' \simeq \ul\BB.
\end{equation*}

Thus we have a natural correspondence $\BB^{\s} \lra \ul \BB$ 
between the set of $\s$-stable canonical basis of $\BU_q^-$ and   
the set of canonical basis of $\ul\BU_q^-$.  
This is nothing but the reformulation, by our context of elementary setting, 
of Lusztig's result [L4, 1.12 (b)](see also [L3, Thm. 14.4.9]) obtained by 
geometric considerations. 

\par\bigskip
\section{PBW-bases for affine quantum groups}

\para{2.1.}
In Beck and Nakajima [BN], the PBW-bases were constructed 
in the case of affine quantum groups.  In this section, by making use of 
their PBW-bases, we shall 
extend the results in the previous section to the case of affine quantum 
groups. 
\par
Let $\Fg$ be an untwisted affine Lie algebra associated to the simply laced 
Cartan datum $X$, with the vertex set $I$, 
and $\Fg_0$ the simple Lie algebra over $\BC$ with the vertex set $I_0$ associated to 
the simply laced Cartan datum $X_0$ such that 
\begin{align*}
L\Fg_0 &= \Fg_0\otimes_{\BC}\BC[t, t\iv], \\
\Fg &= L\Fg_0 \oplus \BC c \oplus \BC d,
\end{align*} 
where $c$ is the center of $\Fg$ and $d$ is the degree operator.
Here $L\Fg_0 \oplus \BC c$ is the central extension of 
the Loop algebra $L\Fg_0$.
\par
Let $\Fg_0 = \Fh_0 \oplus \bigoplus_{\a \in \vD_0}(\Fg_0)_{\a}$ be the root space 
decomposition of $\Fg_0$ with respect to a Cartan subalgebra $\Fh_0$ of $\Fg_0$, 
where $\vD_0$ is the set of roots in $\Fg_0$. 
Then $\Fh = \Fh_0 \oplus \BC c \oplus \BC d$ is a Cartan subalgebra of $\Fg$, and
$\Fg$ is decomposed as 
\begin{equation*}
\tag{2.1.1}
\Fg = \Fh \oplus 
    \biggl(\bigoplus_{\substack{\a \in \vD_0 \\ m \in \BZ}}(\Fg_0)_{\a} \otimes t^m\biggr)
         \oplus \biggl(\bigoplus_{m \in \BZ - \{0\}}\Fh_0 \otimes t^m\biggr).
\end{equation*} 
We define $\d \in \Fh^*$ by $\lp d, \d \rp = 1, \lp \Fh_0 \oplus \BC c, \d \rp = 0$. 
We regard $\a \in \vD_0 \subset \Fh_0^*$ as an element in $\Fh^*$ by 
$\a(c) = 0, \a(d) = 0$.
Then $(\Fg_0)_{\a} \otimes t^m, \Fh_0 \otimes t^m$ corresponds to the root space with 
root $\a + m\d$, $m\d$, respectively, and (2.1.1) gives a root space decomposition of 
$\Fg$ with respect to $\Fh$.  Let $\vD$ (resp. $\vD^+$) be the set of roots 
(resp. the set of positive roots) in $\Fg$.  
Also $\vD_0^+$ be the set of positive roots in $\vD_0$. 
Then $\vD^+$ is given by 
\begin{equation*}
\tag{2.1.2}
\vD^+ = \vD^{\re,+}_{>}  \sqcup \vD^{\re,+}_{<} \sqcup  \BZ_{>0}\d,    
\end{equation*}
where
\begin{align*}
\vD^{\re,+}_{>} &= \{ \a + m\d \mid \a \in \vD_0^+, m \in \BZ_{\ge 0}\},  \\ 
\vD^{\re,+}_{<} &= \{ \a + m\d \mid \a \in -\vD_0^+, m \in \BZ_{> 0} \}. 
\end{align*} 
$\vD^{\re,+}_{>} \sqcup \vD^{\re,+}_{<}$ is the set of positive real roots, 
and $\BZ_{>0}\d$ is the set of positive imaginary roots.
The simple roots $\Pi$ are given by 
\begin{equation*}
\Pi = \{ \a_i \mid i \in I_0 \} \sqcup \{ \a_0 = \d - \th \}, 
\end{equation*}
where $\th$ is the highest root in $\vD_0^+$.

\para{2.2.}
Let $\s : I \to I$ be the permutation as in 1.2. 
We assume that $\s$ preserves $I_0$.  
Thus if $X$ is irreducible, 
$X$ has type $A_{2n+1}^{(1)} (n \ge 1), D_n^{(1)} (n \ge 4), E_6^{(1)}$ 
for $\ve = 2$, and 
$D_4^{(1)}$ for $\ve = 3$. Correspondingly, $\ul X$ has type 
$D^{(2)}_{n+2}, (n \ge 1),  A^{(2)}_{2n -3}, (n \ge 4),  E_6^{(2)}$ and $D_4^{(3)}$
under the notation of the table in [Ka, Section 4.8].  
Let $\ul I_0$ be the set of $\s$-orbits in $I_0$, and $\ul X_0$ be the corresponding 
Cartan datum.  Then $\ul X_0$ has type $B_{n+1}, C_{n-1}, F_4, G_2$, respectively.  
\par
$\s$ induces a Lie algebra automorphism $\s : \Fg \to \Fg$, and
let $\Fg^{\s}$ be the subalgebra of $\Fg$ consisting of $\s$-fixed elements. 
$\s$ preserves $\Fg_0$, and $\s(c) = c, \s(d) = d$.  We define $\Fg_0^{\s}$ similarly.
Then $\Fg^{\s}_0$ is a simple Lie algebra, and 
$\Fg^{\s} = L\Fg^{\s}_0 \oplus \BC c \oplus \BC d$ is the affine Lie algebra 
associated to $\Fg^{\s}_0$. Note that $\Fg^{\s}$ is isomorphic to the 
affine Lie algebra $\ul \Fg$ associated to $\ul X$, 
which is the twisted affine Lie algebra of type $X^{(r)}_k$ given above
(here $r$ coincides with $\ve$). 
Moreover $\Fg_0^{\s}$ is isomorphic to 
$\ul \Fg_0$ associated to $\ul X_0$. 
We have $\Fh^{\s} = \Fh^{\s}_0 \oplus \BC c \oplus \BC d$, and $\Fh^{\s} \simeq \ul\Fh$, 
$\Fh_0^{\s} \simeq \ul\Fh_0$ (Cartan subalgebras of $\ul\Fg$ and $\ul\Fg_0$).     
\par
Note that $\s$ acts on $\vD^+$, leaving $\vD_0^+$ invariant.  
Moreover, $\s(\d) = \d$. Thus $\vD^{\re,+}_{>}$ and $\vD^{\re,+}_{<}$
are stable by $\s$. 
\par
Let $\ul\vD^+$ (resp. $\ul\vD^{\re,+}$, $\ul\vD^{\ima,+}$) be the set of positive roots (resp. 
positive real roots, positive imaginary roots) in the root system $\ul\vD$ of $\ul\Fg$. 
Since $\ul\Fg$ is twisted of type $X^{(r)}_n$, by [K, Prop.6.3], 
$\ul\vD^{\re,+}$ can be written as 
$\ul\vD^{\re,+} = \ul\vD^{\re,+}_{>} \sqcup \ul\vD^{\re,+}_{<}$ and 
$\ul\vD^{\ima,+} = \BZ_{>0}\d$, where 
\begin{align*}
\tag{2.2.1}
\ul\vD^{\re,+}_{>} &= \{\a + m\d \mid \a \in (\ul\vD^+_0)_s, m \in \BZ_{\ge 0} \} \sqcup
            \{ \a + mr\d \mid \a \in (\ul\vD^+_0)_l, m \in \BZ_{\ge 0} \},  \\ 
\ul\vD^{\re,+}_{<} &= \{\a + m\d \mid \a \in -(\ul\vD^+_0)_s, m \in \BZ_{> 0} \} \sqcup
            \{ \a + mr\d \mid \a \in -(\ul\vD^+_0)_l, m \in \BZ_{>0} \}. 
\end{align*}  
Here $(\ul\vD^+_0)_s$ (resp. $(\ul\vD^+_0)_l$) is the set of positive short roots 
(resp. positive long roots) in the root system $\ul\vD_0$ of $\ul\Fg_0$. 

\para{2.3.}
Let $\Fh^{*0} = \{ \la \in \Fh^* \mid \lp c, \la \rp = 0\}
              = \{ \la \in \Fh^* \mid (\la, \d) = 0\}$ be the subspace of 
$\Fh^*$.  Then $\Fh^{*0} = \bigoplus_{i \in I_0}\BC\a_i \oplus \BC \d$. 
We define a map 
\begin{equation*}
\cl : \Fh^{*0} \to \Fh_0^*
\end{equation*} 
by $\cl(\a_i) = \a_i \ (i \in I_0)$ and $\cl(\d) = 0$, where 
$\Fh_0^* = (\Fh_0)^*$.
Then $\cl$ induces an isomorphism $\Fh^{*0}/\BC\d \isom \Fh_0^*$.  
$\s$ acts on $\Fh^{*0}$ and on $\Fh_0^*$, and $\cl$ is compatible with those 
$\s$-actions. Hence $\cl$ induces a map $(\Fh^{*0})^{\s} \to (\Fh_0^*)^{\s}$. 
The restriction map $\Fh^* \to (\Fh^{\s})^*$ induces an isomorphism 
$(\Fh^*)^{\s} \isom (\Fh^{\s})^*$,  
which implies that $(\Fh^{*0})^{\s} \simeq \ul\Fh^{*0}$ since $\Fh^{\s} \simeq \ul\Fh$. 
Similarly we have $(\Fh_0^*)^{\s} \simeq (\Fh_0^{\s})^* \simeq \ul\Fh_0^*$. 
Under those identifications, the induced map $(\Fh^{*0})^{\s} \to (\Fh_0^*)^{\s}$ 
coincides with the map $\cl : \ul \Fh^{*0} \to \ul \Fh_0^*$ defined for $\ul\Fg$ 
similarly to $\Fg$.  

\para{2.4.}
Let $Q_{\cl}$ be the image of $\bigoplus_{i \in I_0}\BZ \a_i$ in $\Fh^{*0}/\BC\d$.
Then $Q_{\cl}$ can be identified with the root lattice
of $\Fg_0$ via $\cl$.  
We define $t : \Fh^{*0} \to GL(\Fh^*)$ by 
\begin{equation*}
\tag{2.4.1}
t(\xi)(\la) = \la + (\la, \d)\xi - \biggl\{ (\la, \xi) + 
                \frac{(\xi,\xi)}{2}(\la,\d)\biggr\}\d,  
\quad (\xi \in \Fh^{*0}, \la \in \Fh^*), 
\end{equation*}
which induces a map $t : \Fh^{*0}/\BC\d \to GL(\Fh^*)$, 
and consider the restriction of $t$ on $Q_{\cl}$. 
Note that in the case where $\la \in \Fh^{*0}$, (2.4.1) can be written 
in a simple form
\begin{equation*}
\tag{2.4.2}
t(\xi)(\la) = \la - (\la, \xi)\d.
\end{equation*}
Let $W$ be the Weyl group of $\Fg$ and $W_0$ the Weyl group of $\Fg_0$. 
Then we have an exact sequence
\begin{equation*}
\tag{2.4.3}
\begin{CD}
1 @>>> Q_{\cl} @>t>> W  @>>>  W_0 @>>> 1.
\end{CD}
\end{equation*}

Put 
\begin{equation*}
P_{\cl} = \{ \la \in \Fh^{*0} \mid (\la, \a_i) \in \BZ \text{ for any } 
                   i \in I_0 \}/\BC\d.
\end{equation*}
Then $P_{\cl}$ is identified with the weight lattice of $\Fg_0$ via $\cl$.
We define an extended affine Weyl group $\wt W$ by 
$\wt W = P_{\cl}\rtimes W_0$ (note that $\Fg$ is simply laced). 
\par
Let $\ul W$ be the Weyl group of $\ul\Fg$ and $\ul W_0$ the Weyl group of 
$\ul\Fg_0$.
Let $(\ ,\ )_1$ be the non-degenerate symmetric bilinear form  on  $\ul\Fh^*_0$, 
normalized that 
$(\a_i, \a_i)_1 = 2$ for a short root $\a_i$ ($i \in \ul I_0)$ (see 1.2).
The form $(\ ,\ )_1$ is extended uniquely to a non-degenerate symmetric bilinear form 
$(\ , \ )_1$ on $\ul\Fh^*$ by the condition that 
$(\la,\d) = \lp c, \la \rp$ for any $\la \in \ul\Fh^*$. 
For $\a \in \ul\vD_0$, put $\a^{\vee} = 2\a/(\a,\a)_1$.  
Put $\ul Q_{\cl} = \bigoplus_{\eta \in \ul I_0}\BZ \a_{\eta}$ and 
$\ul Q^{\vee}_{\cl} = \bigoplus_{\eta \in \ul I_0}\BZ\a_{\eta}^{\vee}$.   
Since $\ul\Fg$ is the dual of the untwisted algebra, 
we have $\ul Q_{\cl} \subset \ul Q^{\vee}_{\cl}$. 
As in (2.4.1), we can define a map 
$t : \ul\Fh^{*0}/\BC\d \to GL(\ul\Fh^*)$, and we have an exact sequence
\begin{equation*}
\tag{2.4.4}
\begin{CD}
1 @>>> \ul Q_{\cl}^{\vee} @>t>>  \ul W @>>> \ul W_0 @>>> 1.
\end{CD}
\end{equation*}

\par
For each $i \in I_0$, let $\w_i$ be the fundamental weight of 
$(\vD_0, \Fh_0^*)$, defined by $(\w_i, \a_j) = \d_{ij}$ ($i,j \in I_0$). 
Then under the isomorphism $\cl : \Fh^{*0}/\BC\d \simeq \Fh_0^*$, 
$P_{\cl} \simeq \bigoplus_{i \in I_0}\BZ \w_i$. 
The action of $\s$ on $\Fh^{*0}$ induces an action of $\s$ on $P_{\cl}$, which 
is given by $\w_i \mapsto \w_{\s(i)}$ $(i \in I_0)$. 
Thus we have an action of $\s$ on $\wt W$, which preserves $W_0$. 
On the other hand, we define the fundamental coweight $\ul \w^{\vee}_{\eta}$ of 
$(\ul\vD_0, \ul\Fh_0^*)$ by $(\ul \w^{\vee}_{\eta}, \a_{\eta'})_1 = \d_{\eta\eta'}$
($\eta, \eta' \in \ul I_0$), and put 
$\ul{\wt \w}_{\eta} = |\eta|\ul\w^{\vee}_{\eta}$.    
We define $\ul{\wt P}_{\cl} = \bigoplus_{\eta \in \ul I_0}\BZ\wt{\ul\w}_{\eta}$, 
which we regard as a lattice of $\ul\Fh^{*0}/\BC\d$ dual to $\ul Q^{\vee}_{\cl}$. 
Define the extended affine Weyl group by 
$\ul{\wt W} = \ul{\wt P}_{\cl} \rtimes \ul W_0$.  Since the map
\begin{equation*}
\tag{2.4.5}
(P_{\cl})^{\s} \isom \ul{\wt P}_{\cl}, \quad  \sum_{i \in \eta}\w_i 
          \mapsto |\eta|\ul\w^{\vee}_{\eta} = \wt{\ul\w}_{\eta}  
\end{equation*}
is compatible with the action of $W_0^{\s}\simeq \ul W_0$, 
we have an isomoprhism 
\begin{equation*}
\tag{2.4.6}
\wt W^{\s} = (P_{\cl})^{\s} \rtimes W_0^{\s} 
         \simeq \wt{\ul P}_{\cl} \rtimes \ul W_0 = \wt{\ul W}.
\end{equation*}

Let $\ST = \{ w \in \wt W \mid w(\vD^+) \subset \vD^+ \}$,which is a subgroup of 
the automorphism group of the ambient diagram.  Then we have 
$\wt W = \ST \ltimes W$. 
Similarly we define 
$\ul\ST = \{ w \in \ul W \mid w(\ul\vD^+) \subset \ul\vD^+\}$ so that. 
$\wt{\ul W} = \ul\ST \ltimes \ul W$. The action of $\s$ on $\wt W$ 
preserves $\ST$, and we have $\ST^{\s} = \ul \ST$. 

\para{2.5.}
Following [BN, 3.1], put
\begin{equation*}
\tag{2.5.1}
\xi = \sum_{i \in I_0}\w_i \in P_{\cl}, 
\end{equation*}
and consider $t(\xi) \in \wt W$, which we simply denote by $\xi$.
Here $\xi \in \wt W^{\s} \simeq \wt{\ul W } = \ul\ST \ltimes \ul W$, and 
one can express $\xi$ as
\begin{equation*}
\tag{2.5.2}
\xi = s_{\eta_1}\cdots s_{\eta_{\ul \nu}}\tau
\end{equation*} 
with $\tau \in \ul\ST = \ST^{\s}$, where $w = s_{\eta_1}\cdots s_{\eta_{\ul \nu}}$
is a reduced expression of $w \in \ul W$ ($w$ is the $\ul W$-component of $\xi$). 
Accordingly, we obtain a reduced 
expression of $w = s_{i_1} \cdots s_{i_{\nu}} \in W$ such that
\begin{equation*}
\tag{2.5.3}
w = \biggl(\prod_{k_1 \in \eta_1}s_{k_1}\biggr)\cdots 
    \biggl(\prod_{k_{\ul \nu} \in \eta_{\ul \nu}}s_{k_{\ul \nu}}\biggr) 
  = s_{i_1}\cdots s_{i_{\nu}}. 
\end{equation*}  

\par
As in [BN, (3.1)], we define a doubly infinite sequence attached to $\Fg$ 
\begin{equation*}
\tag{2.5.4}
\Bh = (\dots i_{-1}, i_0, i_1, \dots)
\end{equation*}
 by 
setting $i_{k + \nu} = \tau(i_k)$ for $k \in \BZ$. 
Then for any integer $m < p$, the product $s_{i_m}s_{i_{m +1}} \cdots s_{i_p} \in W$ 
is a reduced expression. Similarly, we define a doubly infinite sequence 
\begin{equation*}
\tag{2.5.5}
\ul \Bh = (\dots, \eta_{-1}, \eta_0, \eta_{1}, \dots)
\end{equation*}
by the condition that
$\eta_{k + \ul \nu} = \tau(\eta_k)$ for $k \in \BZ$, which satisfies the property 
that $s_{\eta_m}s_{\eta_{m+1}}\cdots s_{\eta_p} \in \ul W$ 
is a reduced expression for $m < p$.      
Note that $\xi \in (P_{\cl})^{\s}$, and under the isomorphism 
$(P_{\cl})^{\s} \simeq \wt{\ul P}_{\cl}$ in (2.4.5), $\xi$ coincides with  
the element $\sum_{\eta \in \ul I_0}\wt{\ul\w}_{\eta}$. 
Thus the sequence (2.5.5) is exactly the sequence defined in [BN, 3.1] 
attached to $\ul\Fg$.  

\par
By (2.4.2), for $\b = \a + m\d \in \vD^{\re,+}_{>}$ and $n \in \BZ$, 
$(n\xi)\iv (\b) = \b + n(\xi, \b)\d = \a + (m + n(\xi,\b))\d$.  Since 
$(\xi,\b) > 0$ by (2.5.1), $(n\xi)\iv(\b) \in \vD^-$ if $n < 0$ is small enough.   
Similar argument holds also for $\b \in \vD^{\re,+}_{<}$ by replacing 
$n <  0$ by $n >  0$.
It follows that 
\begin{align*}
\tag{2.5.6}
\bigcup_{n \in \BZ_{< 0}}(\vD^{\re,+}_{>} \cap w^n(\vD^-)) = \vD^{\re,+}_{>}, 
\qquad 
\bigcup_{n \in \BZ_{> 0}}(\vD^{\re,+}_{<} \cap w^n(\vD^-)) = \vD^{\re,+}_{<}. 
\end{align*}
Similar formulas hold also for the root system $\ul \vD^+$ of $\ul\Fg$. 
As a corollary of (2.5.6), we have
\par\medskip\noindent
(2.5.7) \  Let $\Bh$ be as in (2.5.4).  Then any $i \in I$ appears 
in the infinite sequence $(\dots i_{-1}, i_0, i_1, \dots)$.  Similarly, let 
$\ul\Bh$ be as in (2.5.5).  Then any $\eta \in \ul I$ appears 
in the infinite sequence $(\dots, \eta_{-1}, \eta_0, \eta_{1}, \dots)$.   

\para{2.6.}
Let $\BU_q^-$ (resp. $\ul\BU_q^-$) be the negative part of the quantum 
enveloping algebra $\BU_q$ (resp $\ul\BU_q$) 
 associated to $X$ (resp. $\ul X$).  We follow the notation 
in 1.5.
We fix $\Bh$ as in (2.5.4), and define $\b_k \in \vD^+$ for $k \in \BZ$ by 
\begin{equation*}
\tag{2.6.1}
\b_k = \begin{cases}
          s_{i_0}s_{i_{-1}}\cdots s_{i_{k + 1}}(\a_{i_k})  &\text{ if $k \le 0$}, \\ 
          s_{i_1}s_{i_2} \cdots s_{i_{k-1}}(\a_{i_k})     &\text{ if $k > 0$}. 
       \end{cases}
\end{equation*}
Then, as in [BN, 3.1], we have
\begin{align*}
\tag{2.6.2}
\vD^{\re,+}_{>} = \{ \b_k \mid k \in \BZ_{\le 0} \}, \qquad
\vD^{\re,+}_{<} = \{ \b_k \mid k \in \BZ_{> 0}  \}.
\end{align*}

We define root vectors $f^{(c)}_{\b_k} \in \BU_q^-$ by 

\begin{equation*}
\tag{2.6.3}
    f^{(c)}_{\b_k} = \begin{cases}
                  T_{i_0}T_{i_{-1}}\cdots T_{i_{k+1}}(f^{(c)}_{i_k}), 
                       &\quad\text{ if } k \le 0, \\
                  T_{i_1}\iv T_{i_2}\iv \cdots T_{i_{k-1}}\iv (f^{(c)}_{i_k}), 
                       &\quad\text{ if } k > 0. 
               \end{cases}  
\end{equation*}
We fix $p \in \BZ$, and let 
$\Bc_{+_p} = (c_p, c_{p-1}, \dots) \in \BN^{\BZ_{\le p}}, 
\Bc_{-_p} = (c_{p+1}, c_{p+2}, \dots) \in \BN^{\BZ_{> p}}$
be functions which are almost everywhere 0. 
We define $L(\Bc_{+_p}), L(\Bc_{-_p}) \in \BU_q^-$ by 
\begin{align*}
\tag{2.6.4}
L(\Bc_{+_p}) &= f_{i_p}^{(c_p)}T_{i_p}(f^{(c_{p-1})}_{i_{p-1}})
        T_{i_p}T_{i_{p-1}}(f^{(c_{p-2})}_{i_{p-2}})  \cdots \\ 
L(\Bc_{-_p}) &= \cdots T_{i_{p+1}}\iv T_{i_{p+2}}\iv(f_{i_{p+3}}^{(c_{p+3})}) 
         T_{i_{p+1}}\iv (f_{i_{p+2}}^{(c_{p+2})})f_{i_{p+1}}^{(c_{p+1})}.
\end{align*}
In the case where $p = 0$, 
we simply write $\Bc_{+_p}, \Bc_{-_p}$ as $\Bc_+, \Bc_-$. 
Thus $(\Bc_{+_p}, \Bc_{-_p})$ is obtained from $(\Bc_+, \Bc_-)$ by the shift by $p$. 
Note that $L(\Bc_+)$ (resp. $L(\Bc_-)$) coincides with 
$f_{\b_0}^{(c_0)}f_{\b_{-1}}^{(c_{-1})}f_{\b_{-2}}^{(c_{-2})}\cdots$ 
(resp. $\cdots f_{\b_3}^{(c_3)}f_{\b_2}^{(c_2)}f_{\b_1}^{(c_1)}$). 
A similar discussion works for $\ul\BU_q^-$.  We fix $\ul \Bh$ as in (2.5.5).
$\b_k \in \ul\vD^+$ for $k \in \BZ$ is defined similarly to (2.6.1), 
and the root vectors $\ul f_{\b_k} \in \ul \BU_q^-$ are defined as in (2.6.3). 
For $\ul\Bc_{+_p} = (\g_p, \g_{p-1}, \dots) \in \BN^{\BZ_{\le p}}$, 
    $\ul\Bc_{-_p} = (\g_{p+1}, \g_{p+2}, \dots) \in \BN^{\BZ_{> p}}$, 
define $L(\ul\Bc_{+_p}), L(\ul\Bc_{-_p}) \in \ul\BU_q^-$ similarly to (2.6.4).
\par
It is known by [BN, Remark 3.6], for $i \in I_0, \eta \in \ul I_0$,  
\begin{align*}
\tag{2.6.5}
f_{k\d + \a_i} &= T^k_{-\w_i}f_i, \ (k \ge 0), &\quad
f_{k\d - \a_i} &= T^{-k}_{-\w_i}T_if_i, \ (k > 0), \\
\tag{2.6.6} 
\ul f_{k|\eta|\d + \a_{\eta}} &= T^k_{-\wt{\ul\w}_{\eta}}\ul f_{\eta}, \ (k \ge 0), &\quad
\ul f_{k|\eta|\d - \a_{\eta}} &= T^{-k}_{-\wt{\ul\w}_{\eta}}T_{\eta}\ul f_{\eta}, \ (k \ge 0).
\end{align*}
\para{2.7.}
For $i \in I_0, \eta \in \ul I_0, k > 0$, put 
\begin{align*}
\tag{2.7.1}
\wt\psi_{i,k} &= f_{k\d - \a_i}f_i - q^2f_if_{k\d - \a_i}, \\
\tag{2.7.2}
\wt{\ul\psi}_{\eta, k|\eta|} &= \ul f_{k|\eta|\d - \a_{\eta}}\ul f_{\eta}
                 - q^2_{\eta}\ul f_{\eta}\ul f_{k|\eta|\d - \a_{\eta}}.
\end{align*}
It is known that $\wt\psi_{i,k}$ ($i \in I_0, k \in \BZ_{>0}$) 
are mutually commuting, and similarly, 
$\ul\psi_{\eta, k|\eta|}$ ($\eta \in \ul I_0, k \in \BZ_{> 0}$) are 
mutually commuting. 
For each $i \in I_0, k \in \BZ_{>0}$, we define $\wt P_{i,k} \in \BU_q^-$ 
by the following recursive identity;

\begin{equation*}
\tag{2.7.3}
\wt P_{i,k} = \frac{1}{[k]_q}\sum_{s = 1}^kq^{s-k}\wt\psi_{i,s}\wt P_{i,k-s}. 
\end{equation*}  
Similarly, for $\eta \in \ul I_0, k \in \BZ_{> 0}$, we define 
$\wt{\ul P}_{\eta, k|\eta|} \in \ul\BU_q^-$ by
\begin{equation*}
\tag{2.7.4}
\wt{\ul P}_{\eta, k|\eta|} = 
\frac{1}{[k]_{q_{\eta}}}\sum_{s = 1}^kq_{|\eta|}^{s-k}\wt{\ul\psi}_{\eta, s|\eta|}
              \wt{\ul P}_{\eta, (k-s)|\eta|}.
\end{equation*} 
For a fixed $i \in I_0$, regarding $\wt P_{i,k}$ ($k \in \BZ_{>0}$) as 
elementary symmetric functions, we define Schur polynomials by making use of 
the determinant formula; for each partition $\r^{(i)}$, put
\begin{equation*}
\tag{2.7.5}
S_{\r^{(i)}} = \det \bigl(\wt P_{i, \r'_k - k + m}\bigr)_{1 \le k, m \le t}
\end{equation*}
where $(\r'_1, \dots, \r'_t)$ is the dual partition of $\r^{(i)}$. 
For an $|I_0|$-tuple of partitions 
$\Bc_0 = (\r^{(i)})_{i \in I_0}$, we define $S_{\Bc_0}$ by 
\begin{equation*}
\tag{2.7.6}
S_{\Bc_0} = \prod_{i \in I_0}S_{\r^{(i)}}.
\end{equation*}

Similarly, for a fixed $\eta \in \ul I_0$, choose a 
partition $\ul\r^{(\eta)}$, and define a Schur polynomial by 

\begin{equation*}
\tag{2.7.7}
\ul S_{\ul\r^{(\eta)}}  = 
     \det\bigl(\wt{\ul P}_{\eta, (\r'_k - k + m)|\eta|}\bigr)_{1 \le k,m \le t}
\end{equation*}
where $(\r'_1, \dots, \r'_t)$ is the dual partition of $\ul\r^{(\eta)}$. 
For an $\ul I_0$-tuple of partitions $\ul\Bc_0 = (\ul\r^{(\eta)})_{\eta \in \ul I_0}$, 
we define 
\begin{equation*}
\tag{2.7.8}
\ul S_{\ul\Bc_0} = \prod_{\eta \in \ul I_0}\ul S_{\ul \r^{(\eta)}}.
\end{equation*}

\par
We denote by $\SC$ the set of triples $\Bc = (\Bc_+, \Bc_0, \Bc_-)$, 
where $\Bc_+ \in \BN^{\BZ_{\le 0}}, \Bc_- \in \BN^{\BZ_{> 0}}$, 
and $\Bc_0$ is an $I_0$-tuple of partitions. For each $\Bc \in \SC, 
p \in \BZ$, we define $L(\Bc,p) \in \BU_q^-$ by  
\begin{equation*}
\tag{2.7.9}
L(\Bc, p) = \begin{cases}
         L(\Bc_{+_p})\times \bigl(T\iv_{i_{p +1}}T\iv_{i_{p+2}}\cdots T\iv_{i_0}(S_{\Bc_0})\bigr)
               \times L(\Bc_{-_p}), &\quad\text{ if } p \le 0, \\
         L(\Bc_{+_p}) \times \bigl(T_{i_p}\cdots T_{i_2} T_{i_1} (S_{\Bc_0})\bigr) 
               \times L(\Bc_{-p}),  &\quad\text{ if } p > 0.          
               \end{cases}
\end{equation*}
Similarly, we denote by $\ul\SC$ the set of triples 
$\ul\Bc = (\ul\Bc_+, \ul\Bc_0, \ul\Bc_-)$, where $\ul\Bc_+ \in \BN^{\BZ_{\le 0}},
\ul\Bc_- \in \BN^{\BZ_{> 0}}$, and $\ul\Bc_0$ is the set of $\ul I_0$-tuples of partitions.
We define $L(\ul\Bc, p) \in \ul\BU_q^-$ in a similar way as in (2.7.9). 
The following results are proved in [BN].  Note that Lemma 3.39 in [BN]
can be applied to the case where $X$ is simply laced.  

\begin{prop}[{[BN, Prop. 3.16]}]  
$L(\Bc, p) \in {}_{\BA}\BU_q^-$, and $L(\ul\Bc, p) \in {}_{\BA}\ul\BU_q^-$. 
\end{prop}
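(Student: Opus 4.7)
The plan is to decompose $L(\Bc,p)$ into three factors according to (2.7.9) and establish integrality of each separately: the real root vector products $L(\Bc_{+_p})$ and $L(\Bc_{-_p})$ on the two sides, and the braid-twisted Schur polynomial $S_{\Bc_0}$ in the middle.

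For the real root vector factors, the key input is that Lusztig's braid operators $T_i^{\pm 1}$ preserve the integral form ${}_{\BA}\BU_q$. Since each $f_i^{(c)}\in{}_{\BA}\BU_q^-$, formula (2.6.3) immediately gives $f_{\b_k}^{(c)} \in {}_{\BA}\BU_q$. To upgrade this from ${}_{\BA}\BU_q$ to ${}_{\BA}\BU_q^-$ for the ordered products $L(\Bc_{\pm_p})$ in (2.6.4), I would invoke the standard PBW argument: by (2.5.6) and the reduced-expression property following (2.5.4), the sequences $(\b_k)_{k\le 0}$ and $(\b_k)_{k>0}$ are convex orderings of $\vD^{\re,+}_{>}$ and $\vD^{\re,+}_{<}$, so ordered monomials of divided powers of real root vectors in these orderings automatically stay inside ${}_{\BA}\BU_q^-$.

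The main obstacle is the imaginary factor $S_{\Bc_0}$. The elements $\wt\psi_{i,k}$ defined by (2.7.1) are visibly integral, being $q$-commutators of integral real root vectors. However, the $\wt P_{i,k}$ are defined by the recursion (2.7.3), which involves division by $[k]_q$; this is the quantum analogue of Newton's identity $kh_k=\sum_{s=1}^k p_s h_{k-s}$, with $\wt P_{i,k}$ playing the role of the complete symmetric function $h_k$ and $\wt\psi_{i,k}$ that of the power sum $p_k$. The heart of the proof is therefore the divisibility statement that the right-hand side of (2.7.3) is divisible by $[k]_q$ inside ${}_{\BA}\BU_q^-$. Following [BN, \S3], this is obtained from the explicit commutation structure of the Heisenberg-like subalgebra generated by the imaginary root vectors, ultimately reduced to a rank-one computation in affine $\Fs\Fl_2$ via (2.6.5). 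Granting $\wt P_{i,k}\in{}_{\BA}\BU_q^-$, the Jacobi-Trudi determinant (2.7.5) expresses $S_{\r^{(i)}}$ as an integer polynomial in the $\wt P_{i,k}$, so $S_{\Bc_0}\in{}_{\BA}\BU_q^-$, and the braid twist in (2.7.9) then preserves integrality.

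The argument for $L(\ul\Bc,p)\in {}_{\BA}\ul\BU_q^-$ proceeds along identical lines, using (2.6.6) and (2.7.4) in place of (2.6.5) and (2.7.3); since the ambient datum $X$ is simply laced, Lemma 3.39 of [BN] applies and streamlines the divisibility by $[k]_{q_{\eta}}$ in (2.7.4). Combining the three factors in each case yields the claim.
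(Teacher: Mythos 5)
The paper offers no proof of this statement at all: it is quoted directly from [BN, Prop.\ 3.16], so the "paper's approach" is simply a citation. Your sketch is a faithful reconstruction of the Beck--Nakajima argument (split $L(\Bc,p)$ via (2.7.9) into the two real-root monomials and the twisted Schur factor, with the only genuine content being the divisibility by $[k]_q$ making $\wt P_{i,k}\in{}_{\BA}\BU_q^-$), and since you defer that key step back to [BN] exactly as the paper does, your proposal is correct and takes essentially the same route.
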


\begin{prop}[{[BN, Thm 3.13 (i), Lemma 3.39]}]  
We fix $\Bh$ and $p$ as before. 
\begin{enumerate}
\item
For various $\Bc \in \SC$, $L(\Bc, p)$ are almost orthonormal, namely, 
\begin{equation*}
(L(\Bc, p), L(\Bc', p)) \in \d_{\Bc,\Bc'} + q\BZ[[q]] \cap \BQ(q). 
\end{equation*}
In particular, for a fixed $\Bh, p$, 
$\{ L(\Bc, p) \mid \Bc \in \SC\}$ gives a $\BQ(q)$-basis of $\BU_q^-$.   
\par
Similarly, $L(\ul\Bc, p)$ are almost orthonormal, and 
$\{ L(\ul\Bc, p)\mid \ul\Bc \in \ul \SC\}$ gives a $\BQ(q)$-basis of 
$\ul\BU_q^-$.  
\item
$\{ L(\Bc, p) \mid \Bc \in \SC \}$ gives an $\BA$-basis of ${}_{\BA}\BU_q^-$. 
\end{enumerate}
\end{prop}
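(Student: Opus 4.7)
The plan is to adapt to the affine setting Lusztig's argument for almost orthonormality and $\BA$-integrality of the finite-type PBW-basis, using the doubly infinite sequence $\Bh$ in place of a reduced expression of $w_0$, and treating the imaginary part (which has no finite-type analogue) via the Heisenberg-like subalgebra generated by the $\wt\psi_{i,k}$.

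First, I would establish almost orthonormality by decomposing the pairing $(L(\Bc,p), L(\Bc',p))$ into three pieces corresponding to the positive real factor $L(\Bc_{+_p})$, the imaginary factor $S_{\Bc_0}$, and the negative real factor $L(\Bc_{-_p})$. For the real factors, one uses that the braid operators $T_i$ preserve the Lusztig form up to a natural weight-twist, so orthogonality reduces to the classical identity $(f_i^{(a)}, f_i^{(b)}) = \d_{ab}\prod_{d=1}^a (1-q_i^{2d})\iv$, which lies in $1+q\BZ[[q]]$ on the diagonal. For the imaginary factor, one checks that the $\wt\psi_{i,k}$ realize, modulo $q$, a commuting family indexed by the imaginary roots, so the Schur polynomials $S_{\r^{(i)}}$ built from them via (2.7.3) and (2.7.5) inherit the Hall inner product orthonormality of classical Schur functions. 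Cross-pairings between the three factors vanish to leading order in $q$ because the braid translation $T_{i_p}\cdots T_{i_1}$ moves the imaginary factor to a reference convex position where it commutes mod $q$ with the surrounding real root vectors, by (2.6.5).

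Given almost orthonormality, the elements $L(\Bc,p)$ are linearly independent over $\BQ(q)$ in each weight space. A character count showing that $|\{\Bc \in \SC \mid \text{wt}\, L(\Bc,p) = \n\}| = \dim_{\BQ(q)} \BU_q^-[\n]$, obtained by comparing the product of geometric series over $\vD^{\re,+}_{>} \sqcup \vD^{\re,+}_{<}$ and the partition-valued contribution from the imaginary roots against the Weyl--Kac denominator formula for $\BU_q^-$, then forces $\{L(\Bc,p)\}$ to span, proving (i). The same argument works verbatim for $\ul\BU_q^-$, with imaginary root multiplicities adjusted by the factors $|\eta|$ coming from $\wt P_{\eta, k|\eta|}$ in (2.7.4).

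For (ii), Proposition 2.8 tells us $L(\Bc,p) \in {}_{\BA}\BU_q^-$. Expressing these elements in terms of a known $\BA$-spanning family of ${}_{\BA}\BU_q^-$ (ordered products of divided powers $f_j^{(a)}$), the Gram matrix of $\{L(\Bc,p)\}$ restricted to each weight space is upper unitriangular modulo $q$ for a compatible total order on $\SC$, hence has determinant a unit in $\BA$. It follows that $\{L(\Bc,p)\}$ is an $\BA$-basis of ${}_{\BA}\BU_q^-$. The main obstacle is the imaginary part: one must first show that the recursively defined $\wt P_{i,k}$ correspond to elementary symmetric functions in a Heisenberg realization of the imaginary subalgebra, and then use Jacobi--Trudi to obtain Schur functions whose Hall orthonormality gives the required unit normalization. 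The subtle compatibility between $T_{-\w_i}$, the braid generators, and this Heisenberg structure is where the bulk of the technical work lies.
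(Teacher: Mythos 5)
The first thing to say is that the paper does not prove this statement at all: Proposition~2.9 is imported wholesale from Beck--Nakajima ([BN, Thm.~3.13(i), Lemma~3.39]), the authors adding only the remark that Lemma~3.39 of [BN] applies because $X$ is simply laced. So there is no internal proof to compare with, and your proposal has to be judged as a reconstruction of the [BN] argument. For part~(i) your outline has broadly the right shape --- splitting the pairing into the $L(\Bc_{+_p})$, $S_{\Bc_0}$, $L(\Bc_{-_p})$ blocks, almost orthonormality of the imaginary part via a Hall-pairing identification of the $\wt P_{i,k}$ with elementary symmetric functions, and a character count against the Weyl--Kac denominator to upgrade linear independence to spanning. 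But be aware that ``$T_i$ preserves the form'' holds only on suitable subspaces such as $\BU_q^-\cap T_i(\BU_q^-)$, and the vanishing of the cross-pairings between the three blocks (your ``commutes mod $q$ in a reference convex position'') is where most of the technical work in [BN] actually sits; as written these are assertions rather than proofs.

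Part~(ii) contains a genuine gap. Almost orthonormality gives that the Gram matrix of $\{L(\Bc,p)\}$ in a weight space is congruent to the identity modulo $q\BZ[[q]]$, so its determinant lies in $1+q\BZ[[q]]\cap\BQ(q)$; this is \emph{not} a unit of $\BA=\BZ[q,q\iv]$ (already $1+q$ qualifies), and likewise the transition matrix $A$ to a known almost orthonormal $\BA$-basis satisfies only $(\det A)^2\in 1+q\BZ[[q]]$, which does not force $\det A=\pm q^k$. So integrality plus almost orthonormality does not yield the $\BA$-basis property. This is precisely the distinction between (iii) and (iii$'$) in (0.1.1) that motivates the whole paper: those two hypotheses give at best a signed basis of the $\BZ[[q]]\cap\BQ(q)$-lattice, not an $\BA$-basis, and closing that gap for $B_2$ and $G_2$ historically required the hard computations of [L1] and [X1]. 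The actual proof of (ii) in the simply laced affine case ([BCP] for $p=0$, [BN, Lemma~3.39] for general $p$, cf.\ Remark~2.18) goes through the bar involution and the global (canonical) basis; and it is exactly because no such argument was available for the twisted types that the authors prove Corollary~2.17, deriving (iii) for $\ul\BU_q^-$ from (iii$'$) by the mod-$\ve$ reduction. To repair your step you would need either to invoke the canonical basis as [BN] does, or to supply the bar-triangularity of the PBW basis together with an argument in the spirit of Corollary~1.21.
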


\para{2.10.}
We first fix $\ul\Bh$ as in (2.5.5), then construct $\Bh$ 
as in (2.5.4) from $\ul\Bh$ by making use of of the relation (2.5.3). 
We also fix $\ul p > 0$, and consider the sequence 
$w_{\ul p} = s_{\eta_{\ul p}}s_{\eta_{\ul p -1}}s_{\eta_{p-2}} 
     \cdots$ in $\ul W  \simeq W^{\s}$. 
Then $w_{\ul p}$ determines an integer $p > 0$ such that $w_{\ul p}$
corresponds to $w_p = s_{i_p}s_{i_{p-1}}s_{i_{p-2}}\cdots$ in $W$.   
For each $s_{\eta_k}$ appearing in $w_{\ul p}$, let $I_k$ be an interval in $\BZ$
such that $s_{\eta_k} = \prod_{j \in I_k}s_{i_j}$ corresponds to a 
subexpression of $w_p$ as above.
Put $F_{\eta_k}(\Bc_{\pm_p}) = \prod_{j \in I_k}f_{i_j}^{(c_j)}$.  
We also define $R_{\eta} = \prod_{j \in \eta}T_j$ for $\eta \in \ul I_0$.
Then $\s$ commutes with $R_{\eta}$.  Note that 
$L(\Bc_{+_p}), L(\Bc_{-_p})$ can be expressed as 
\begin{align*}
\tag{2.10.1}
L(\Bc_{+_p}) &= F_{\eta_{\ul p}}(\Bc_{+_p})R_{\eta_{\ul p}}(F_{\eta_{\ul p -1}}(\Bc_{+_p})) 
           R_{\eta_{\ul p}}R_{\eta_{\ul p-1}}(F_{\eta_{\ul p -2}}(\Bc_{+_p}))\cdots, \\ 
L(\Bc_{-_p}) &= \cdots R_{\eta_{\ul p+1}}\iv R_{\eta_{\ul p+2}}\iv 
                 (F_{\eta_{\ul p+3}}(\Bc_{-_p})) 
      R_{\eta_{\ul p+1}}\iv (F_{\eta_{\ul p+2}}(\Bc_{-_p}))F_{\eta_{\ul p+1}}(\Bc_{-_p}).
\end{align*}    

We have a lemma.
\begin{lem}  
Take $\Bh, p$ as in 2.10. 
\begin{enumerate}
\item
$\s$ permutes the PBW-basis 
$\{ L(\Bc, p)\}$ of $\BU_q^-$, namely, 
$\s(L(\Bc, p)) = L(\Bc',p)$ for some $\Bc' \in \SC$. 
\item
Let $\Bc = (\Bc_+, \Bc_0, \Bc_-) \in \SC$.  Then 
$L(\Bc,p)$ is $\s$-stable if and only if $c_j$ is constant for 
each $j \in I_k$ corresponding to $s_{\eta_k}$ in $w_{\ul p}$, and 
$\r^{(i)}$ is constant on $i \in \eta$ for each $\eta \in \ul I_0$. 
In particular, the set of $\s$-stable PBW-basis in $\BU_q^-$ 
with respect to $\Bh, p$ is in bijection with the set of PBW-basis 
$\{ L(\ul\Bc, \ul p)\}$ in $\ul\BU_q^-$ if $\Bh,p$ are obtained from 
$\ul\Bh, \ul p$.  
\end{enumerate}
\end{lem}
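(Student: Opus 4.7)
\medskip\noindent\textbf{Proof proposal.}
The plan is to analyze how $\s$ acts on each of the three factors of
$L(\Bc,p)$ in $(2.7.9)$, exploiting $\s\circ T_i\circ \s\iv = T_{\s(i)}$
and the fact that $\s$ commutes with every $R_\eta = \prod_{i\in\eta}T_i$.

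First I would treat the real-root factors $L(\Bc_{\pm_p})$ via the expressions
$(2.10.1)$. Because $\s$ commutes with each $R_{\eta_k}$, the computation
reduces to $\s\bigl(F_{\eta_k}(\Bc_{\pm_p})\bigr)=\prod_{j\in I_k}f_{\s(i_j)}^{(c_j)}$.
Since admissibility of $\s$ makes the $f_i$'s for $i\in\eta_k$ mutually commute,
this product again lies in $F_{\eta_k}(\cdot)$ but with the exponents
$\{c_j : j\in I_k\}$ permuted by the action of $\s$ on $\eta_k$. Hence $\s$
permutes the family $\{L(\Bc_{\pm_p})\}$, and $L(\Bc_{\pm_p})$ is $\s$-stable
precisely when $c_j$ is constant on every block $I_k$.

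The harder piece is the imaginary factor. The prefactor
$T_{i_p}\cdots T_{i_1}$ (or its inverse for $p\le 0$) regroups by $(2.5.3)$
into a product of $R_{\eta_k}$'s and so commutes with $\s$, reducing the
problem to computing $\s(S_{\Bc_0})$. The central claim I would establish is
\begin{equation*}
\s(\wt P_{i,k}) = \wt P_{\s(i),k}.
\end{equation*}
For this I would start from $(2.6.5)$: together with $\s(\w_i)=\w_{\s(i)}$ and
$\s(\d)=\d$, it gives $\s(f_{k\d-\a_i})=f_{k\d-\a_{\s(i)}}$. Substituting into
$(2.7.1)$ yields $\s(\wt\psi_{i,k})=\wt\psi_{\s(i),k}$, and the recursion
$(2.7.3)$ then propagates the identity to every $\wt P_{i,k}$. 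Applying this
to the determinantal formula $(2.7.5)$ shows that $\s(S_{\r^{(i)}})$ is the
Schur polynomial of the same shape $\r^{(i)}$ in the variables $\wt P_{\s(i),\,*}$.
Using that the $\wt P_{i,k}$ with distinct $i$ mutually commute, a reindexing
yields $\s(S_{\Bc_0})=S_{\Bc_0'}$ with $\Bc_0' = (\r^{(\s\iv(i))})_{i\in I_0}$,
so $S_{\Bc_0}$ is $\s$-stable iff $\r^{(i)}$ depends only on the $\s$-orbit of $i$.

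Putting the three analyses together settles (i) and the characterization in
(ii). The resulting $\s$-stable triples $(\Bc_+,\Bc_0,\Bc_-)\in\SC$ are in an
obvious bijection with triples $(\ul\Bc_+,\ul\Bc_0,\ul\Bc_-)\in\ul\SC$ by
collapsing each constant block to its common value and each orbit-constant
family of partitions to its common partition; matching this against the
definition of $L(\ul\Bc,\ul p)$ in $\ul\BU_q^-$ produces the stated bijection.
The main obstacle is the $\s$-equivariance of the imaginary root vectors
$\wt P_{i,k}$, for which the inductive argument through $(2.7.3)$ together with
the translation formula $(2.6.5)$ is essential; once that identity is in place,
the remainder of the argument is purely bookkeeping.
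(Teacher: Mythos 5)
Your proposal is correct and follows essentially the same route as the paper: the real-root factors are handled through the regrouped expressions (2.10.1) using that $\s$ commutes with each $R_{\eta}$, and the imaginary factor is handled by propagating $\s(f_{k\d\pm\a_i})=f_{k\d\pm\a_{\s(i)}}$ from (2.6.5) through $\wt\psi_{i,k}$, the recursion (2.7.3) for $\wt P_{i,k}$, and the determinantal formula for $S_{\r^{(i)}}$. Your write-up is somewhat more explicit than the paper's (e.g.\ in noting that the braid-operator prefactor on $S_{\Bc_0}$ commutes with $\s$), but the decomposition and the key identities are identical.
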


\begin{proof}
By (2.10.1), we have $\s(L(\Bc_{+_p})) = L(\Bc'_{+_p})$, 
$\s(L(\Bc_{-_p})) = L(\Bc'_{-_p})$ for some 
$\Bc'_{+_p} \in \BN^{\BZ_{\le p}}, \Bc'_{-_p} \in \BN^{\BZ_{>p}}$. 
On the other hand, since
$\s(f_{k\d \pm \a_i}) = f_{k\d \pm \a_{\s(i)}}$ for $i \in I_0, k > 0$ 
by (2.6.5), 
we have $\s(\wt\psi_{i,k}) = \wt\psi_{\s(i),k}$, and so
$\s(\wt P_{i,k}) = \wt P_{\s(i),k}$.  This implies that 
$\s(S_{\r^{(i)}}) = S_{\r^{(\s(i))}}$ for each $i \in I_0$.   
We see that $\s(S_{\Bc_0}) = S_{\Bc_0'}$ for some $I_0$-tuple of 
partitions $\Bc_0'$.  
Thus we obtain (i).  (ii) follows from (i). 
\end{proof}

\para{2.12.}
We apply the discussion in 1.5 to the affine case,  
and we can define a homomorphism $\pi : {}_{\BA'}\BU_q^{-,\s} \to \BV_q$. 
For any $\eta \in \ul I$, and $a \in \BN$,  we define 
$\wt f^{(a)}_{\eta} = \prod_{i \in I}f_i^{(a)}$, and put 
$g^{(a)}_{\eta} = \pi(\wt f^{(a)}_{\eta})$ as in 1.9.  
Then Proposition 1.10 still holds for the affine case, and we can define 
an algebra homomorphism $\Phi : {}_{\BA'}\ul \BU_q^- \to \BV_q$ of $\BA'$-algebras. 
Assume that $\Bh, p$ are obtained from $\ul\Bh, \ul p$ as in 2.10.  We denote by 
$\SX_{\Bh, p}$ the set of PBW-basis 
$\{ L(\Bc, p) \mid \Bc \in \SC\}$ of $\BU_q^-$, 
and $\SX^{\s}_{\Bh,p}$ the subset of 
$\SX_{\Bh,p}$ consisting of $\s$-stable PBW-basis. 
Similarly, we denote by $\ul\SX_{\ul\Bh, \ul p}$ the set of PBW-basis 
$\{ L(\ul\Bc, \ul p) \mid \ul\Bc \in \ul\SC\}$ of $\ul\BU_q^-$. 
By Lemma 2.11 (ii), we have a natural bijection 
$\SX^{\s}_{\Bh,p} \simeq \ul\SX_{\ul\Bh, \ul p}$, by $L(\Bc, p) \lra L(\ul\Bc, \ul p)$. 
We put $E(\ul\Bc, \ul p) = \pi(L(\Bc, p))$ under this correspondence. 
Then by Lemma 2.11 (i), and by Proposition 2.9 (see the discussion in 1.12), 
we see that $\{ E(\ul\Bc, \ul p)\}$ gives rise to an $\BA'$-basis of $\BV_q$. 
\par
Assume that $L(\Bc, p) \in \SX^{\s}_{\Bh,p}$ corresponds to 
$L(\ul\Bc, \ul p) \in \ul\SX_{\ul\Bh, \ul p}$ with 
$\Bc = (\Bc_+, \Bc_0, \Bc_-)$, $\ul\Bc = (\ul\Bc_+, \ul\Bc_0, \ul\Bc_-)$.
We consider $L(\Bc_{+_p}), L(\Bc_{-_p}) \in \BU_q^{-,\s}$ and 
$L(\ul\Bc_{+_{\ul p}}), L(\ul\Bc_{-_{\ul p}}) \in \ul\BU_q^-$. 
The following result can be proved in a similar way as in Theorem  1.14 (i).

\begin{prop}  
$\Phi(L(\ul\Bc_{+_{\ul p}})) = \pi(L(\Bc_{+_p}))$ and 
$\Phi(L(\ul\Bc_{-_{\ul p}})) = \pi(L(\Bc_{-_p}))$. 
\end{prop}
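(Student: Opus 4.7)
The plan is to imitate the proof of Theorem 1.14 (i) on the product expansion (2.10.1). Since $\Bh, p$ arise from $\ul\Bh, \ul p$ as in 2.10 and $L(\Bc, p)$ is $\s$-stable by Lemma 2.11 (ii), the exponent $c_j$ is constant with common value $\g_k$ on each block $I_k$, whence $F_{\eta_k}(\Bc_{+_p}) = \wt f_{\eta_k}^{(\g_k)}$. Thus (2.10.1) rewrites as
\begin{equation*}
L(\Bc_{+_p}) = \wt f_{\eta_{\ul p}}^{(\g_{\ul p})}\cdot R_{\eta_{\ul p}}\bigl(\wt f_{\eta_{\ul p -1}}^{(\g_{\ul p -1})}\bigr)\cdot R_{\eta_{\ul p}}R_{\eta_{\ul p-1}}\bigl(\wt f_{\eta_{\ul p -2}}^{(\g_{\ul p -2})}\bigr)\cdots,
\end{equation*}
which matches, symbol by symbol, the defining formula for $L(\ul\Bc_{+_{\ul p}})$ under the correspondence $R_\eta \leftrightarrow \ul T_\eta$ and $\wt f_\eta^{(a)} \leftrightarrow \ul f_\eta^{(a)}$. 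The analogous rewriting holds for $L(\Bc_{-_p})$ with $R_\eta, \ul T_\eta$ replaced by their inverses.

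Applying $\pi$ and $\Phi$ termwise and using that both are algebra homomorphisms, the proposition reduces to the following affine analogue of (1.14.1): for every $w \in \ul W$, every $\eta \in \ul I$ with $w(\a_\eta) \in \ul\vD^+$, and every $a \in \BN$,
\begin{equation*}
\pi\bigl(R_w(\wt f_\eta^{(a)})\bigr) = \Phi\bigl(\ul T_w(\ul f_\eta^{(a)})\bigr),
\end{equation*}
where $R_w := R_{\eta_1}\cdots R_{\eta_k}$ for any reduced expression $w = \ul s_{\eta_1}\cdots\ul s_{\eta_k}$; this is well-defined since each $R_\eta$ commutes with $\s$ and the $R_\eta$'s satisfy the braid relations of $\ul W$. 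I would prove this by induction on $l(w)$, following the induction of Theorem 1.14 verbatim. The base case $l(w)=0$ is Proposition 1.10 extended to the affine setting (asserted in 2.12), together with the definitional identity $\pi(\wt f_\eta^{(a)}) = g_\eta^{(a)} = \Phi(\ul f_\eta^{(a)})$. For $l(w) > 0$, one chooses $\eta' \ne \eta$ with $l(w\ul s_{\eta'}) = l(w)-1$, factors $w = w'w''$ with $w''$ in the subgroup $\langle \ul s_\eta, \ul s_{\eta'}\rangle$ subject to the length conditions stated in the proof of Theorem 1.14, invokes the rank-two Lemma 1.13 inside this subgroup to settle $w''$, and applies the inductive hypothesis to $w'$ at both $\wt f_\eta^{(a)}$ and $\wt f_{\eta'}^{(a)}$; composition via $R_w = R_{w'}R_{w''}$ (and the analogous factorization of $\ul T_w$), using that each $R_{w'}$ is an algebra automorphism of $\BU_q^-$, then completes the step.

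The main obstacle is the legitimacy of invoking Lemma 1.13 in the affine rank-two substitution, since the lemma is stated for a rank-two Cartan datum of finite type whereas here the ambient $\ul X$ is affine. For each of the twisted affine types $D_{n+2}^{(2)}, A_{2n-1}^{(2)}, E_6^{(2)}, D_4^{(3)}$, any two distinct simple reflections of $\ul W$ have finite braid order, so $\langle \ul s_\eta, \ul s_{\eta'}\rangle$ is finite dihedral and the corresponding rank-two Cartan sub-datum of $\ul X$ is of finite type ($A_1\times A_1, A_2, B_2$, or $G_2$). Since Lemma 1.13 is an intrinsic algebraic identity inside the subalgebra generated by $\ul f_\eta, \ul f_{\eta'}$ and its $\s$-unfolding inside $\BU_q^-$, depending only on the rank-two Cartan matrix and not on the ambient root system, its proof in Section 4 applies verbatim and supplies the required input for the induction. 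The negative part of the proposition is then handled by the mirror-image argument, using the inverses $R_w\iv, \ul T_w\iv$ and the analogous expansion of $L(\Bc_{-_p})$.
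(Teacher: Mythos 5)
Your proposal is correct and follows essentially the same route as the paper, whose entire proof of Proposition 2.13 is the remark that it "can be proved in a similar way as in Theorem 1.14 (i)"; your write-up is a faithful execution of exactly that induction via the affine analogue of (1.14.1). Your extra care in checking that the rank-two parabolic subgroups of the twisted affine Weyl groups are finite (so that Lemma 1.13 applies) fills in a point the paper leaves implicit, but does not change the argument.
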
 
 
\para{2.14.}
Let $\Bc_0 = (\r^{(i)})_{i \in I_0}$ be an $I_0$-tuple of partitions appearing 
in $\Bc$, and 
$\ul\Bc_0 = (\ul\r^{(\eta)})_{\eta \in \ul I_0}$ be an $\ul I_0$tuple of 
partitions appearing in $\ul\Bc$ as in 2.7.  
We have $\r^{(i)} = \ul\r^{(\eta)}$ if $i \in \eta$ for each $\eta \in \ul I_0$.     
Then $S_{\Bc_0} \in \BU_q^{-,\s}$, and we consider $\pi(S_{\Bc_0}) \in \BV_q$. 
On the other hand, we can consider $\ul S_{\ul\Bc_0} \in \ul\BU_q^-$. 
We show a lemma.

\begin{lem}  
$\Phi(\ul S_{\ul \Bc_0}) = \pi(S_{\Bc_0})$. 
\end{lem}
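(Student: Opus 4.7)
The plan is to reduce the identity to the elementary generators $\wt\psi_{i,k}$ and then propagate through the recursion (2.7.3)--(2.7.4) defining the $\wt P_{i,k}$ and through the Jacobi--Trudi determinant (2.7.7) defining $S_{\r^{(i)}}$. The recurring mechanism is the one used throughout Section~1: sums of complete $\s$-orbits lie in $J$ and vanish under $\pi$, so only ``diagonal'' (constant-tuple) contributions survive, and matching the two sides then rests on the Freshman's dream identity modulo the prime $\ve$.

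I would first establish the identity at the level of $\wt\psi$:
\[
\Phi(\wt{\ul\psi}_{\eta,k|\eta|})=\pi\bigl(\prod_{i\in\eta}\wt\psi_{i,k}\bigr).
\]
From (2.6.5) and (2.6.6), combined with an affine analogue of Lemma~1.13 obtained from Proposition~2.13 and the identification $\wt W^{\s}\simeq\wt{\ul W}$ of (2.4.5), one gets $\Phi(\ul f_{k|\eta|\d-\a_\eta})=\pi\bigl(\prod_{i\in\eta}f_{k\d-\a_i}\bigr)$ and $\Phi(\ul f_\eta)=\pi(\wt f_\eta)$. Expanding $\prod_{i\in\eta}(f_{k\d-\a_i}f_i-q^2 f_if_{k\d-\a_i})$, the terms indexed by proper nonempty subsets $S\subsetneq\eta$ form complete $\s$-orbits and die in $J$. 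The surviving terms $S\in\{\emptyset,\eta\}$ can be rearranged using commutativity of $f_i,f_{k\d-\a_j}$ for distinct $i,j\in\eta$ (a Beck--Nakajima fact since $(\a_i,\a_j)=0$), and the residual scalar $(-q^2)^{|\eta|}$ coincides with $-q_\eta^2$ modulo $\ve$ because $(-1)^\ve\equiv -1\pmod\ve$ for $\ve\in\{2,3\}$.

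Next, an induction on $k$ would show $\Phi(\wt{\ul P}_{\eta,k|\eta|})=\pi\bigl(\prod_{i\in\eta}\wt P_{i,k}\bigr)$. Applying $\Phi$ to (2.7.4) and applying $\pi$ to the $|\eta|$-fold product of (2.7.3)---discarding the non-constant index tuples $(s_i)_{i\in\eta}$, which form complete $\s$-orbits---gives
\[
[k]_{q_\eta}\,\Phi(\wt{\ul P}_{\eta,k|\eta|})=\sum_{s=1}^k q_\eta^{s-k}\pi\bigl(\prod_{i\in\eta}\wt\psi_{i,s}\wt P_{i,k-s}\bigr)=[k]_q^{|\eta|}\,\pi\bigl(\prod_{i\in\eta}\wt P_{i,k}\bigr),
\]
the two equalities using the inductive hypothesis, Step~1, and commutativity within the orbit. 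The Frobenius congruence $[k]_q^\ve\equiv[k]_{q^\ve}\pmod\ve$, deduced from $(a\pm b)^\ve\equiv a^\ve\pm b^\ve\pmod\ve$, makes the two scalar coefficients equal in $\BA'$, and torsion-freeness of $\BV_q$ over $\BA'$ (from the $\BA'$-basis structure $\{E(\ul\Bc,\ul p)\}$ recorded in 2.12) licenses cancellation. For the Schur step, I would apply $\Phi$ entrywise to (2.7.7) and expand $\prod_{i\in\eta}S_{\r^{(i)}}$ as a sum over tuples of permutations $(\s_i)_{i\in\eta}$: only the constant tuples $(\s,\dots,\s)$ survive modulo $J$, and the factor $(\sgn\s)^{|\eta|}$ reduces to $\sgn\s$ modulo $\ve$ (trivially for $\ve=3$; and $1\equiv -1$ in $\BF_2$ for $\ve=2$). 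This yields $\Phi(\ul S_{\ul\r^{(\eta)}})=\pi\bigl(\prod_{i\in\eta}S_{\r^{(i)}}\bigr)$, and taking the product over $\eta\in\ul I_0$ finishes the lemma.

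The main obstacle is the inductive step: the recursion naturally produces the coefficient mismatch $[k]_q^{|\eta|}$ versus $[k]_{q_\eta}$, so one must invoke the prime-$\ve$ Frobenius identity in $\BA'$ and ensure that cancellation of the (nonzero, hence injective on a torsion-free module) scalar $[k]_{q_\eta}$ is legitimate in $\BV_q$. A secondary input needed throughout is the Beck--Nakajima commutativity of imaginary root vectors and of the power-sum elements $\wt\psi_{i,k}$ across distinct elements of a single $\s$-orbit, valid since $(\a_i,\a_j)=0$ for $i\ne j\in\eta$.
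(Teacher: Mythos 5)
Your proposal is correct and follows essentially the same route as the paper's proof: reduce to $\Phi(\wt{\ul\psi}_{\eta,k|\eta|})=\pi(\prod_{i\in\eta}\wt\psi_{i,k})$ via Proposition 2.13 applied at $p=0$, then pass to the $\wt P$'s and the Schur determinants using the commutativity within a $\s$-orbit, the fact that non-constant index tuples group into complete $\s$-orbits lying in $J$, and the identity $([k]_q)^{|\eta|}=[k]_{q_\eta}$ in $\BA'$. Your uniform subset/tuple expansions and the explicit justification of cancelling $[k]_{q_\eta}$ (freeness of $\BV_q$ over the domain $\BA'$) merely flesh out the steps the paper dispatches as ``similar arguments.''
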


\begin{proof}
Take $i \in I_0$ such that $i \in \eta$. We consider 
$\prod_{i \in \eta}f_{k\d + \a_i} \in \BU_q^{-,\s}$ and 
$\ul f_{k|\eta|\d + \a_{\eta}} \in \ul\BU_q^-$, and similar elements 
obtained by replacing $\a_i$ by $-\a_i$, $\a_{\eta}$ by $-\a_{\eta}$. 
By applying Proposition 2.13 for the case where $p = 0$, we have
\begin{align*}
\tag{2.15.1}
\Phi(\ul f_{k|\eta|\d + \a_{\eta}}) = \pi(\prod_{i \in \eta}f_{k\d + \a_i}),  \qquad 
\Phi(\ul f_{k|\eta|\d - \a_{\eta}}) = \pi(\prod_{i \in \eta}f_{k\d - \a_i}).
\end{align*}  
Next we show, for $\eta \in \ul I_0, k > 0$,  that
\begin{equation*}
\tag{2.15.2}
\Phi(\wt{\ul\psi}_{\eta, k|\eta|}) = \pi(\prod_{i \in \eta}\wt\psi_{i,k}).
\end{equation*}

It is known by [B, BCP] that $T_{\w_i}(f_{k\d \pm \a_j}) = f_{k\d \pm \a_j}$ 
for $i \ne j, k\ge 0$. Hence 
if $(\a_i, \a_j) = 0$, we have
\begin{equation*}
\tag{2.15.3}
f_jf_{k\d - \a_i} = f_jT^{-k}_{-\w_i}T_i(f_i) = T^{-k}_{\w_i}T_i(f_jf_i) 
                  = T^{-k}_{\w_i}T_i(f_if_j) = f_{k\d - \a_i}f_j
\end{equation*} 
by (2.6.5). Again by using (2.6.5) we have
\begin{equation*}
\tag{2.15.4}
f_{k\d - \a_i}f_{k\d - \a_j} = f_{k\d - \a_j}f_{k\d - \a_i}.
\end{equation*}
\par  
In the case where $|\eta| = 1$, (2.15.2) immediately follows from (2.15.1). 
We assume that $|\eta| = 2$, and put $\eta = \{ i, j\}$.  Then 
by using commutation relations (2.15.3), (2.15.4), we have 
\begin{align*}
\wt\psi_{i, k}\wt\psi_{j, k} &= 
          (f_{k\d - \a_i}f_i - q^2f_if_{k\d - \a_i})
          (f_{k\d - \a_j}f_j - q^2f_jf_{k\d - \a_j})  \\
 &= f_{k\d - \a_i}f_{k\d - \a_j}f_if_j + q^4f_if_jf_{k\d - \a_i}f_{k\d - \a_j} -  q^2Z , 
\end{align*}
where 
\begin{align*}
Z &= f_{k\d - \a_i}f_if_jf_{k\d - \a_j} + f_if_{k\d - \a_i}f_{k\d - \a_j}f_j \\
  &= f_jf_{k\d - \a_i}f_{k\d - \a_j}f_i + f_if_{k\d - \a_i}f_{k\d - \a_j}f_j \\
  &= f_jf_{k\d - \a_j}f_{k\d - \a_i}f_i + \s(f_jf_{k\d - \a_j}f_{k\d - \a_i}f_i).  
\end{align*}
Since $Z \in J$, we have
\begin{equation*}
\pi(\wt\psi_{i,k}\wt\psi_{j,k}) 
    = \pi (f_{k\d - \a_i}f_{k\d - \a_j}f_if_j 
      - q^2_{\eta}f_if_jf_{k\d - \a_i}f_{k\d - \a_j}).  
\end{equation*}
Now (2.15.2) follows from (2.15.1).  The proof for the case $|\eta| = 3$ 
is similar. Thus (2.15.2) is proved.
\par
Since $\wt\psi_{i,k}$ and $\wt\psi_{j, \ell}$ commute for any pair, 
$\wt\psi_{i,k}$ commutes with $\wt P_{j,\ell}$ for any pair $i,j, k,\ell$. 
Then by a similar argument as in the proof of (2.15.2), 
for each $\eta \in \ul I_0$ we have
\begin{equation*}
\tag{2.15.5}
\Phi(\wt {\ul P}_{\eta, k}) = \pi\bigl(\prod_{i \in \eta}\wt P_{i,k}\bigr).
\end{equation*}
(Note that $([k]_q)^{|\eta|} = [k]_{q_{\eta}}$ in $\BA'$. ) 
\par
Since $\wt P_{i,k}$ are commuting for any pair $i,k$, (2.15.5) implies, 
by a similar argument as above, that
\begin{equation*}
\tag{2.15.6}
\Phi(\ul S_{\r^{(\eta)}}) = \pi(\prod_{i \in \eta}S_{\r^{(i)}})
\end{equation*}
for any $\eta \in \ul I_0$. Lemma 2.15 follows from this. 
\end{proof}

The following result is an analogue of Theorem 1.14 and Proposition 1.20.
\begin{thm}  
\begin{enumerate}
\item 
For any $\ul\Bc  \in \ul\SC$, we have $\Phi(L(\ul\Bc, \ul 0)) = E(\ul\Bc, \ul 0)$. 
\item 
PBW-basis $\{ L(\ul\Bc, \ul 0) \mid \ul\Bc \in \ul\SC\}$ 
gives an $\BA'$-basis of ${}_{\BA'}\ul\BU_q^-$. 
\item 
$\Phi$ gives an isomorphism ${}_{\BA'}\ul\BU_q^- \isom \BV_q$. 
\end{enumerate}
\end{thm}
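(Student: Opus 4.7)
The plan is to mirror the proof structure of Theorem 1.14 and Proposition 1.20 in the finite-type case, with Proposition 2.13 and Lemma 2.15 supplying the affine analogues of Lemma 1.13 and the simple-root compatibility used there. The homomorphism $\Phi : {}_{\BA'}\ul\BU_q^- \to \BV_q$ (defined analogously to the finite case) is already in hand, together with the $\BA'$-basis $\{ E(\ul\Bc, \ul 0) \}$ of $\BV_q$ described in 2.12.

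For (i), I would use the factorization (2.7.9),
\begin{equation*}
L(\ul\Bc, \ul 0) = L(\ul\Bc_+) \cdot \ul S_{\ul\Bc_0} \cdot L(\ul\Bc_-),
\end{equation*}
apply the algebra homomorphism $\Phi$, and treat each factor separately. Proposition 2.13 yields $\Phi(L(\ul\Bc_+)) = \pi(L(\Bc_+))$ and $\Phi(L(\ul\Bc_-)) = \pi(L(\Bc_-))$, while Lemma 2.15 yields $\Phi(\ul S_{\ul\Bc_0}) = \pi(S_{\Bc_0})$. Multiplying, we conclude $\Phi(L(\ul\Bc, \ul 0)) = \pi(L(\Bc, 0)) = E(\ul\Bc, \ul 0)$. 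In particular, since $\{ E(\ul\Bc, \ul 0) \}$ is an $\BA'$-basis of $\BV_q$, this immediately shows that $\Phi$ is surjective.

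For (ii) and (iii), let $M \subset {}_{\BA'}\ul\BU_q^-$ be the $\BA'$-submodule generated by $\{ L(\ul\Bc, \ul 0) \mid \ul\Bc \in \ul\SC \}$; by Proposition 2.8 this makes sense. By (i), $\Phi$ maps the generators of $M$ bijectively onto the $\BA'$-basis $\{ E(\ul\Bc, \ul 0) \}$ of $\BV_q$, so the generators are $\BA'$-linearly independent and $\Phi|_M : M \isom \BV_q$ is an isomorphism of $\BA'$-modules. To finish, I need to show $M = {}_{\BA'}\ul\BU_q^-$. Following the strategy of 1.19, this reduces to verifying that $M$ is stable under left multiplication by the generators $\ul f_{\eta}^{(a)}$ for all $\eta \in \ul I$ and $a \in \BN$. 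By (2.5.7), every $\eta$ appears in the doubly infinite sequence $\ul\Bh$, so after an appropriate shift of $\ul p$ the element $\ul f_\eta^{(a)}$ occurs as the leftmost factor of a PBW-basis element $L(\ul\Bc', \ul p)$ of the Beck--Nakajima form; one then argues that the $\BA'$-span of the PBW-basis is independent of the choice of $(\ul\Bh, \ul p)$, invoking the almost-orthonormality of Proposition 2.9(i) reduced modulo $\ve$ and the already-established identification $\Phi : M \isom \BV_q$ with $\BV_q$ a fixed $\BA'$-module.

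The main obstacle will be the last step: establishing that the $\BA'$-submodule generated by the PBW-basis is independent of the choice of $(\ul\Bh, \ul p)$ and, correspondingly, closed under left multiplication by $\ul f_\eta^{(a)}$. In the finite-type case (1.19) this was essentially immediate once $\ul\Bh$ could be chosen to begin with the relevant $\eta$; in the affine case the PBW-bases of Beck--Nakajima involve the Schur-polynomial (imaginary root) contributions $\ul S_{\ul\Bc_0}$ in addition to real-root parts, so one must combine the shift-compatibility of the sequence (2.5.5) with the multiplicative compatibility expressed by Lemma 2.15 to transport left multiplication by generators into the span. Once this closure is obtained, $M = {}_{\BA'}\ul\BU_q^-$, which simultaneously proves (ii) that $\{ L(\ul\Bc, \ul 0)\}$ is an $\BA'$-basis and (iii) that $\Phi$ is an isomorphism.
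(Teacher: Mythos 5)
Your proposal follows essentially the same route as the paper: part (i) via Proposition 2.13 and Lemma 2.15 applied to the factorization (2.7.9), and parts (ii)--(iii) by identifying the $\BA'$-span $M$ of the PBW-basis with $\BV_q$ via $\Phi$ and then proving $M = {}_{\BA'}\ul\BU_q^-$ through stability under left multiplication by the $\ul f_{\eta}^{(a)}$. The step you flag as the main obstacle is resolved in the paper exactly by the mechanism you name: since $\{E(\ul\Bc,\ul p)\}$ is an $\BA'$-basis of the \emph{fixed} module $\BV_q$ for every $\ul p$ (Proposition 2.9(ii) applied to the simply laced algebra together with 2.12), pulling back along the module isomorphism $\Phi|_M$ shows $\{L(\ul\Bc,\ul p)\}$ spans the same $M$ for all $\ul p$, and (2.10.1) with (2.5.7) then gives the required closure.
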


\begin{proof}
(i) follows from Proposition 2.13 and Lemma 2.15.
By Proposition 2.8, (the image of ) $L(\ul\Bc, \ul 0)$ is contained in ${}_{\BA'}\ul\BU_q^-$.
Hence the map $\Phi : {}_{\BA'}\ul\BU_q^- \to \BV_q$ is surjective. 
As in the proof of Theorem 1.14, $\Phi$ can be extended to the map 
${}_{\BF(q)}\ul\BU^-_q \to {}_{\BF(q)}\BV_q$, which gives an isomorphism of $\BF(q)$-algebras.
Let ${}_{\BA'}\wt{\ul\BU}_q^-$ be the $\BA'$-submodule of ${}_{\BF(q)}\ul\BU_q^-$ 
spanned by $L(\ul\Bc,0)$.
Then $\Phi$ gives an isomorphism 
${}_{\BA'}\wt{\ul\BU}_q^- \simeq \BV_q$ of $\BA'$-modules. 
In particular, ${}_{\BA'}\wt{\ul\BU}_q^-$ is an algebra over $\BA'$.
We note that 
\begin{equation*}
\tag{2.16.1}
{}_{\BA'}\wt{\ul\BU}_q^- = {}_{\BA'}\ul\BU_q^-.
\end{equation*}

In fact, ${}_{\BA'}\wt{\ul\BU}_q^- \subset {}_{\BA'}\ul\BU_q^-$ by Proposition 2.8.
Since $\{ E(\ul\Bc, \ul p) \mid \ul\Bc \in \ul\SC \}$ is an $\BA'$-basis of 
$\BV_q$, $\{ \Phi\iv(E(\ul\Bc, \ul p)) \mid \ul\Bc \in \ul\SC\}$ 
gives an $\BA'$-basis of ${}_{\BA'}\wt{\ul\BU}_q^-$ for any $\ul p$.. 
Hence by (2.10.1), ${}_{\BA'}\wt{\ul\BU}_q^-$ is invariant under 
the left multiplication by $\ul f_{\eta_{\ul p}}^{(k)}$. 
By (2.5.7), for any $\eta \in \ul I$, there exists $\ul p$ such that
$\eta = \eta_{\ul p}$. Thus ${}_{\BA'}\wt{\ul\BU}_q^-$ is invariant under 
the left multiplication by any $\ul f^{(k)}_{\eta}$, and (2.16.1) follows.
Now (ii) and (iii) follows from (2.16.1).  The theorem is proved.
\end{proof}

\begin{cor}  
For any $\ul p \in \BZ$, 
the PBW-basis $\{L(\ul\Bc, \ul p) \mid \ul\Bc \in \ul\SC \}$ 
gives an $\BA$-basis of ${}_{\BA}\ul\BU_q^-$. 
\end{cor}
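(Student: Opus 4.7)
The plan is to deduce Corollary~2.17 from Theorem~2.16 by an $\ve$-adic lifting argument, completely parallel to the proof of Corollary~1.21 in the finite-type case.

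First I would note that Theorem~2.16(ii) holds not only for $\ul p = \ul 0$ but for arbitrary $\ul p \in \BZ$. Indeed, Proposition~2.13 and Lemma~2.15 are already stated for any $\ul p$, so $\Phi(L(\ul\Bc,\ul p)) = E(\ul\Bc,\ul p)$; the collection $\{E(\ul\Bc,\ul p)\}$ is an $\BA'$-basis of $\BV_q$ by Lemma~2.11 and Proposition~2.9(i); and the surjectivity argument around (2.16.1), applied with $\ul f_{\eta_{\ul p}}^{(k)}$ in place of $\ul f_{\eta_{\ul 0}}^{(k)}$ and using (2.5.7), shows that the $\BA'$-span of $\{L(\ul\Bc,\ul p)\}$ equals ${}_{\BA'}\ul\BU_q^-$. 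Consequently $\{L(\ul\Bc,\ul p)\}$ is an $\BA'$-basis of ${}_{\BA'}\ul\BU_q^-$ for every $\ul p$.

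Next, mimicking Corollary~1.21, I would introduce the $\ve$-adic completion
\begin{equation*}
{}_{\BA}\wh{\ul\BU}_q^- = \lim_{\leftarrow}\bigl({}_{\BA}\ul\BU_q^-/\ve^n\,{}_{\BA}\ul\BU_q^-\bigr),
\end{equation*}
a module over $\BZ_\ve[q,q\iv] = \lim_{\leftarrow}\BA/\ve^n\BA$, together with the natural embedding ${}_{\BA}\ul\BU_q^- \hookrightarrow {}_{\BA}\wh{\ul\BU}_q^-$. Take any $x \in {}_{\BA}\ul\BU_q^-$. By the extension of Theorem~2.16(ii) above, $x$ is an $\BA$-combination of $\{L(\ul\Bc,\ul p)\}$ modulo $\ve\,{}_{\BA}\ul\BU_q^-$; iterating through successive powers of $\ve$ yields an expansion $x = \sum c_{\ul\Bc} L(\ul\Bc,\ul p)$ with $c_{\ul\Bc} \in \BZ_\ve[q,q\iv]$ inside the completion. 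On the other hand, by Proposition~2.9(i) there is a unique expansion $x = \sum c'_{\ul\Bc} L(\ul\Bc,\ul p)$ in $\ul\BU_q^-$ with $c'_{\ul\Bc} \in \BQ(q)$. Comparing both expansions in the (finite-dimensional on PBW elements) weight space of $x$ forces the coefficients into $\BQ(q) \cap \BZ_\ve[q,q\iv] = \BA$, so $\{L(\ul\Bc,\ul p)\}$ spans ${}_{\BA}\ul\BU_q^-$ over $\BA$; $\BA$-linear independence is inherited from Proposition~2.9(i). Hence $\{L(\ul\Bc,\ul p)\}$ is an $\BA$-basis of ${}_{\BA}\ul\BU_q^-$.

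The principal obstacle is this last comparison step. It requires the PBW elements to remain $\BZ_\ve[q,q\iv]$-linearly independent in ${}_{\BA}\wh{\ul\BU}_q^-$, which descends from the extended Theorem~2.16(ii) by a standard Nakayama-type mod-$\ve$ argument, together with the fact that the natural map ${}_{\BA}\ul\BU_q^- \to \BQ(q)\otimes_{\BA}{}_{\BA}\wh{\ul\BU}_q^-$ is injective on each weight space, so that the inverse-limit expansion and the $\BQ(q)$-expansion really do coincide. This is precisely the technical point already invoked (without detailed comment) in Corollary~1.21, and it transfers to the affine setting without change because each weight space of ${}_{\BA}\ul\BU_q^-$ is $\BA$-torsion-free and, in view of the almost orthonormality of Proposition~2.9(i), supports only finitely many relevant PBW elements at a given weight.
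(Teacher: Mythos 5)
Your $\ve$-adic completion step is exactly the paper's argument: it is the proof of Corollary 1.21 transported to the affine setting, and that part is correct. The divergence, and the gap, lies in how you pass from $\ul p = \ul 0$ to general $\ul p$. You assert that Theorem 2.16 (ii) extends to arbitrary $\ul p$ because ``Proposition 2.13 and Lemma 2.15 are already stated for any $\ul p$, so $\Phi(L(\ul\Bc,\ul p)) = E(\ul\Bc,\ul p)$.'' Proposition 2.13 does handle the real-root factors $L(\ul\Bc_{\pm_{\ul p}})$ for general $\ul p$, but Lemma 2.15 only proves $\Phi(\ul S_{\ul\Bc_0}) = \pi(S_{\Bc_0})$ for the untwisted Schur polynomials, and by (2.7.9) the imaginary-root factor of $L(\Bc,p)$ is $S_{\Bc_0}$ itself only when $p = 0$: for $p \ne 0$ it is the braid-twisted element $T_{i_p}\cdots T_{i_1}(S_{\Bc_0})$, resp. $T\iv_{i_{p+1}}\cdots T\iv_{i_0}(S_{\Bc_0})$. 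To run your argument you would additionally need an identity of the form $\Phi\bigl(\ul T_{\eta_{\ul p}}\cdots \ul T_{\eta_1}(\ul S_{\ul\Bc_0})\bigr) = \pi\bigl(R_{\eta_{\ul p}}\cdots R_{\eta_1}(S_{\Bc_0})\bigr)$, i.e. a compatibility of $\Phi$ with the braid operators on the imaginary part analogous to (1.14.1). This is plausible but is nowhere proved in the paper and does not follow formally from Lemma 2.15, since $\Phi$ is not defined to commute with the braid action; establishing such compatibility is precisely the kind of statement that required the explicit rank-two computations of Sections 3 and 4 in the real-root case.

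The paper sidesteps this entirely: it first obtains the $\BA$-basis statement for $\ul p = \ul 0$ by the completion argument you describe (using Theorem 2.16, which is only stated at $\ul p = \ul 0$), and then invokes [BN, Lemma 3.39], which relates the PBW bases $\{L(\ul\Bc,\ul p)\}$ for different values of $\ul p$ by a transition invertible over $\BA$, so that the $\BA$-basis property propagates to every $\ul p$. To repair your proof, either supply the twisted Schur-polynomial identity above, or (more economically) restrict your lifting argument to $\ul p = \ul 0$ and then quote the same change-of-$p$ lemma from [BN] --- at which point your proof coincides with the paper's.
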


\begin{proof}
By a similar argument as in the proof of Corollary 1.21, we see that
$\{ L(\ul\Bc, \ul 0) \mid \ul\Bc \in \ul\SC \}$ 
gives an $\BA$-basis of ${}_{\BA}\ul\BU_q^-$ thanks to 
Theorem 2.16.  Then by [BN, Lemma 3.39], $\{ L(\ul\Bc, \ul p)\}$ gives an  
$\BA$-basis of ${}_{\BA}\ul\BU_q^-$.  The corollary is proved. 
\end{proof}

\remark{2.18.}
In the case where $\Fg$ is a simply laced affine algebra, 
the fact that $\{ L(\Bc, p) \mid \Bc \in \SC\}$ 
gives an $\BA$-basis of ${}_{\BA}\BU_q^-$ (Proposition 2.9 (ii)) was known by 
[BCP] for $p = 0$, and was proved by [BN] for arbitrary $p$.  
Corollary 2.17 is a generalization of this fact to the case of twisted 
affine algebras. 
Once this is done, one can define the (signed) canonical basis 
$b(\Bc, p)$ parametrized by $L(\Bc, p)$ as in (1.22.2).
The basis $\{ b(\Bc,p) \mid \Bc \in \SC\}$ 
is independent of the choice of $\Bh$ and $p$, up to $\pm 1$.
In [BN], in the simply laced case, this ambiguity of the sign was 
removed by using the theory of extremal weight modules due to [K2].
It is likely that our result makes it possible to extend their
results to the case of twisted affine Lie algebras.      

\par\bigskip
\section{The proof of Proposition 1.10}

\para{3.1.}
In this and next section we write $[a]_{q^i}$ as $[a]_i$ for any $i \in \BZ$. 
Thus $[a]_q = [a]_1$ and $[a]_{q_{\eta}} = [a]_{|\eta|}$ since 
$(\a_{\eta}, \a_{\eta})_1/2 = |\eta|$. 
${}_{\BA'}\ul\BU_q^-$ is the $\BA'$-algebra with generators 
$ \ul f_{\eta}^{(a)}$  $(\eta \in \ul I, a \in \BN)$ with fundamental relations 
\begin{align*}
\tag{3.1.1}
&\sum_{k= 0}^{1 - a_{\eta\eta'}}(-1)^k
       \ul f_{\eta}^{(k)}\ul f_{\eta'}\ul f_{\eta}^{(1 - a_{\eta\eta'} - k)} = 0,
          \qquad (\eta \ne \eta'), \\ 
\tag{3.1.2}
&[a]^!_{|\eta|}\ul f_{\eta}^{(a)} = \ul f_{\eta}^a, \qquad (a \in \BN),
\end{align*} 
where $A = (a_{\eta\eta'})$ is the Cartan matrix of $\ul X$. 
In order to prove Proposition 1.10, it is enough to show that 
$g_{\eta}^{(a)}$ satisfies a similar relations as above, namely, 
\begin{align*}
\tag{3.1.3}
&\sum_{k= 0}^{1 - a_{\eta\eta'}}(-1)^k
       g_{\eta}^{(k)}g_{\eta'}g_{\eta}^{(1 - a_{\eta\eta'} - k)} = 0,
          \qquad (\eta \ne \eta'), \\ 
\tag{3.1.4}
&[a]^!_{|\eta|}g_{\eta}^{(a)} = g_{\eta}^a, \qquad (a \in \BN).
\end{align*} 
First we show (3.1.4). 
We have 
\begin{align*}
\wt f_{\eta}^{(a)} = \prod_{i \in \eta}f_i^{(a)} 
        = ([a]^!_1)^{-|\eta|}\prod_{i \in \eta}f_i^a = ([a]^!_1)^{-|\eta|}\wt f_{\eta}^a. 
\end{align*}
Since $|\eta| = 1$ or $\ve$, we have 
$([a]^!_1)^{|\eta|} = [a]^!_{|\eta|}$ in $\BA' = \BF[q, q\iv]$ 
with $\BF = \BZ/\ve\BZ$. Thus (3.1.4) follows. 
\par
For the proof of (3.1.3), 
we may assume that $\ul X$ is of rank 2. 
Here we change the notation from 1.3, and consider $\ul I = \{ \ul 1, \ul 2 \}$ with 
Cartan matrix 
\begin{equation*}
A = \begin{pmatrix}
          2  &  0   \\
          0  &  2
     \end{pmatrix}
       \quad\text{ or } \quad
    \begin{pmatrix}
           2  &  a \\
           -1 &  2
    \end{pmatrix}
\end{equation*} 
where $\ul X$ is of type $A_1 \times A_1$ in the first case, 
and $a = -1, -2, -3$ according to the cases 
$\ul X$ is of type $A_2, B_2, G_2$. 
\par
Assume that $\ul X$ is of type $A_1 \times A_1$.
In this case, we have
$(\a_i, \a_j) = 0$ for any $i, j \in I$ 
such that $i \ne j$.  
It is easily seen that 
$g_{\ul 1}g_{\ul 2} = g_{\ul 2}g_{\ul 1}$, which coincides with the relation 
(3.1.3).  Thus (3.1.3) holds.

\para{3.2.} 
Assume that $\ul X$ is of type $A_2$. We have two possibilities for $I$, 
$\ul i = \{ i \}$ or $\ul i = \{ i,i'\}$ for $i = 1,2$. 
In the former case, (3.1.3) clearly holds.  So we may assume that 
$I = \{ 1,2,1',2'\}$ with $\ul 1 = \{ 1,1'\}, \ul 2 = \{ 2,2'\}$, 
where $(\a_i, \a_j) =  -1$ for $\{ i, j\} = \{ 1,2\}$ or $\{ 1',2'\}$, 
and is equal to zero for other cases.
We have $g_{\ul 1} = \pi(f_1f_{1'})$ and $g_{\ul 2} = \pi(f_2f_{2'})$.  
The relation (3.1.3) is given by 
\begin{align*}
\tag{3.2.1}
g_{\ul 1}g_{\ul 2}^{(2)} - g_{\ul 2}g_{\ul 1}g_{\ul 2} + g_{\ul 2}^{(2)}g_{\ul 1} = 0.
\end{align*}   
By (3.1.4), this is equivalent to 
\begin{equation*}
\tag{3.2.2}
g_{\ul 1}g_{\ul 2}^2 - (q^2 + q^{-2})g_{\ul 2}g_{\ul 1}g_{\ul 2} + g_{\ul 2}^2g_{\ul 1} = 0.
\end{equation*}
We show (3.2.2). 
It follows from the Serre relations for $A_2$, we have 
\begin{align*}
\tag{3.2.3}
&f_1f_2^2 - (q + q\iv)f_2f_1f_2 + f^2_2f_1 = 0, \\
&f_2f_1^2 - (q + q\iv)f_1f_2f_1 + f^2_1f_2 = 0, \\
\end{align*}
and formulas obtained form (3.2.3) by replacing $f_1, f_2$ by $f_{1'}, f_{2'}$. 
By multiplying these two formulas, and by using the commutation relations
$f_if_j = f_jf_i$ unless $\{ i,j\} = \{ 1, 2\}$ nor $\{ 1', 2'\}$, we have
\begin{align*}
(f_1f_{1'})(f_2f_{2'})^2 + (q + q\iv)^2 (f_2f_{2'})(f_1f_{1'})(f_2f_{2'}) 
      + (f_2f_{2'})^2(f_1f_{1'}) + Z = 0,  
\end{align*}
where 
\begin{equation*}
Z = -(q + q\iv)\bigl( f_2f_{2'}^2f_1f_{1'}f_2 + f_{2'}f_2^2f_{1'}f_1f_{2'} \bigr)
\end{equation*}
Since $\ve = 2$, and $Z \in J$,  we obtain (3.2.2). 
Thus (3.1.3) is verified for $\ul X$ of type $A_2$. 

\para{3.3.}
Assume that $\ul X$ is of type $B_2$ and $X$ is of type $A_3$. 
We have $I = \{ 1, 2, 2'\}$, $\ul I = \{ \ul 1, \ul 2\}$ with 
$\ul 1 = \{ 1 \}$ and $\ul 2 = \{ 2,2'\}$, where 
$(\a_i,\a_j) = -1$ if $\{ i, j\} = \{ 1, 2\}$ or $\{ 1,2'\}$ and is equal to 
zero for all other $i \ne j$. By (3.1.4), (3.1.3) is equivalent to the formulas
\begin{align*}
\tag{3.3.1}
&g_{\ul 1}g_{\ul 2}^2 - (q^2 + q^{-2})g_{\ul 2}g_{\ul 1}g_{\ul 2}
                         + g_{\ul 2}^2g_{\ul 1} = 0, \\
\tag{3.3.2}
&g_{\ul 2}g_1^3 - [3]_1g_{\ul 1}g_{\ul 2}g_{\ul 1}^2 + [3]_1g_{\ul 1}^2g_{\ul 2}g_{\ul 1}
               - g_{\ul 1}^3g_{\ul 2} = 0.               
\end{align*}  

We show (3.3.1).  Here $\BU_q^-$ satisfies the formulas (3.2.3) and the formulas 
obtained from (3.2.3) by replacing $f_1, f_2$ by $f_1, f_{2'}$. 
By multiplying $f_{2'}^2$ from the right on (3.2.3) for $f_1f_2^2$, we have 
\begin{equation*}
\tag{3.3.3}
f_1f_2^2f_{2'}^2 - (q + q\iv)f_2f_1f_{2'}^2f_2 + f_2^2f_1f_{2'}^2 = 0. 
\end{equation*}
Here by applying (3.2.3) for $f_1f_{2'}^2$, we have
\begin{align*}
f_2(f_1f_{2'}^2)f_2 &= (q + q\iv)(f_2f_{2'})f_1(f_2f_{2'}) - f_2f_{2'}^2f_1f_2,  \\ 
f_2^2(f_1f_{2'}^2) &= (q + q\iv)f_2^2f_{2'}f_1f_{2'} - (f_2f_{2'})^2f_1.
\end{align*}
Substituting these formulas into (3.3.3), we have 
\begin{equation*}
f_1(f_2f_{2'})^2 - (q + q\iv)^2(f_2f_{2'})f_1(f_2f_{2'}) - (f_2f_{2'})^2f_1 + Z = 0, 
\end{equation*}
where 
\begin{equation*}
Z = (q + q\iv) \bigl( f_{2'}^2f_2f_1f_2 + f_2^2f_{2'}f_1f_{2'}\bigr).
\end{equation*}
Since $\d = 2$, $Z \in J$, we obtain (3.3.1). 
\par
Next we show (3.3.2). 
First note the following equality. By using (3.2.3) for $f_1f_2f_1$ 
and for $f_{2'}f_1^2$, we have 
\begin{align*}
\tag{3.3.4}
(q + q\iv)f_1f_{2'}(f_1f_2f_1) &= f_1f_{2'}(f_2f_1^2 + f_1^2f_2)  \\
                &= f_1(f_2f_{2'})f_1^2 + f_1(f_{2'}f_1^2)f_2 \\
                &= f_1(f_2f_{2'})f_1^2 + (q + q\iv)f_1^2f_{2'}f_1f_2 - f_1^3f_{2'}f_2. 
\end{align*}

Here by applying (3.2.3) for $f_2f_1^2$ and for $f_{2'}f_1^2$ twice, 
we have 
\begin{align*}
f_{2'}f_1(f_2f_1^2) &= f_{2'}f_1\bigl((q + q\iv)f_1f_2f_1 - f_1^2f_2\bigr)  \\
                    &= f_{2'}f_1^2\bigl((q + q\iv)f_2f_1 - f_1f_2\bigr) \\
                    &= \bigl((q + q\iv)f_1f_{2'}f_1 - f_1^2f_{2'}\bigr)
                       \bigl((q + q\iv)f_2f_1 - f_1f_2\bigr) \\
                    &=  (q + q\iv)^2f_1f_{2'}f_1f_2f_1 - (q + q\iv)f_1(f_{2'}f_1^2)f_2
                       - (q + q\iv)f_1^2f_{2'}f_2f_1 + f_1^2f_{2'}f_1f_2 \\
                    &= (q + q\iv)^2f_1f_{2'}f_1f_2f_1 
                       - \bigl((q + q\iv)^2 - 1\bigr)f_1^2f_{2'}f_1f_2 \\
                    &\phantom{***} 
                        - (q + q\iv)f_1^2f_2f_{2'}f_1 + (q + q\iv)f_1^3f_2f_{2'}.  
\end{align*}
Substituting (3.3.4) into the last equality, we obtain 
\begin{align*}
\tag{3.3.5}
f_{2'}f_1f_2f_1^2 &= (q + q\iv)f_1(f_2f_{2'})f_1^2 - (q + q\iv)f_1^2(f_2f_{2'})f_1 
+ f_1^2f_{2'}f_1f_2. 
\end{align*}
On the other hand, by applying (3.2.3) for $f_{2'}f_1^2$, we have
\begin{align*}
\tag{3.3.6}
(f_{2'}f_1^2)f_2f_1  &= (q + q\iv)f_1f_{2'}f_1f_2f_1 - f_1^2f_{2'}f_2f_1  \\   
                     &= f_1(f_2f_{2'})f_1^2 + (q + q\iv)f_1^2f_{2'}f_1f_2 - f_1^3f_2f_{2'}
                    - f_1^2f_{2'}f_2f_1.
\end{align*}
The second identity is obtained by substituting (3.3.4) into the first identity. 
Now by applying (3.2.3) for $f_2f_1^2$ we have 
\begin{align*}
f_{2'}f_2f_1^3 &= f_{2'}(f_2f_1^2)f_1 
                = (q + q\iv)f_{2'}f_1f_2f_1^2 - f_{2'}f_1^2f_2f_1.
\end{align*} 
By substituting (3.3.5) and (3.3.6) into the last formula, we have
\begin{align*}
\tag{3.3.7}
(f_2f_{2'})f_1^3 &= (q^2 + 1 + q^{-2}) f_1(f_2f_{2'})f_1^2 
                - (q^2 + 1  + q^{-2})f_1^2(f_2f_{2'})f_1
                + f_1^3(f_2f_{2'}). 
\end{align*}
Since $[3]_1 = q^2 + 1 + q^{-2}$, by applying $\pi$, we obtain (3.3.2).
Note that the formula (3.3.7) is obtained without appealing modulo 2. 
Thus (3.1.3) is verified for $\ul X$ of type $B_2$. 

\para{3.4.}
Assume that $\ul X$ is of type $G_2$ and $X$ is of type $D_4$. 
We have $I = \{ 1,2,2',2''\}$, $\ul I = \{ \ul 1, \ul 2\}$ with 
$\ul 1 = \{ 1 \}$ and $\ul 2 = \{ 2,2',2''\}$, where $(\a_i, \a_j) = -1$ 
if $\{ i, j\} = \{ 1, 2\}, \{1, 2'\}$ or $\{ 1, 2''\}$, and is equal to zero 
for all other $i \ne j$. 
By (3.1.4), (3.1.3) is equivalent to the formulas

\begin{align*}
\tag{3.4.1}
&g_{\ul 1}g_{\ul 2}^2 - (q^3 + q^{-3})g_{\ul 2}g_{\ul 1}g_{\ul 2} + g_{\ul 2}^2g_{\ul 1} = 0, \\
\tag{3.4.2}
&g_{\ul 2}g_{\ul 1}^4 - [4]_1g_{\ul 1}g_{\ul 2}g_{\ul 1}^3 
                      + \begin{bmatrix}
                          4  \\
                          2
                        \end{bmatrix}_1 g_{\ul 1}^2g_{\ul 2}g_{\ul 1}^2
                      - [4]_1g_{\ul 1}^3g_{\ul 2}g_{\ul 1} + g_{\ul 1}^4g_{\ul 2} = 0, 
\end{align*} 
where $[4]_1 = q^3 + q + q\iv + q^{-3}$ and 
   $\begin{bmatrix}
      4 \\
      2
    \end{bmatrix}_1 = q^4 + q^2 + 2 + q^{-2} + q^{-4}$. 
We show (3.4.1). 
Here $\BU_q^-$ satisfies the formulas (3.2.3) and the formulas 
obtained from (3.2.3) by replacing $f_1, f_2$ by $f_1, f_{2'}$ or $f_1, f_{2''}$. 
By multiplying $f_{2'}^2f_{2''}^2$ from the right on (3.2.3) for $f_1f_2^2$, we have 
\begin{equation*}
\tag{3.4.3}
f_1f_2^2f_{2'}^2f_{2''}^2 - (q + q\iv)f_2f_1f_{2'}^2f_{2''}^2f_2 
         + f_2^2f_1f_{2'}^2f_{2''}^2 = 0. 
\end{equation*}
Concerning the middle term, by applying (3.2.3) for $f_1f_{2'}^2$, 
then for $f_1f_{2''}^2$, we have

\begin{align*}
\tag{3.4.4}
f_2(f_1f_{2'}^2)f_{2''}^2f_2 &=  (q + q\iv)f_2f_{2'}(f_1f_{2''}^2)f_{2'}f_2 
                      - f_2f_{2'}^2f_1f_{2''}^2f_2 \\      
     &=  (q + q\iv)^2f_2f_{2'}f_{2''}f_1f_2f_{2'}f_{2''} 
            - (q + q\iv)f_2f_{2'}f_{2''}^2f_1f_2f_{2'}  \\ 
           &\phantom {*****} - f_2f_{2'}^2f_1f_2f_{2''}^2.
\end{align*}

Concerning the third term, by applying (3.2.3) for $f_1f_{2''}^2$, then 
for $f_1f_{2'}^2$, and finally for $f_2^2f_1$, we have
\begin{align*}
f_2^2(f_1f_{2''}^2)f_{2'}^2 &= (q + q\iv)f_2^2f_{2''}f_1f_{2''}f_{2'}^2 
                                 - f_2^2f_{2''}^2(f_1f_{2'}^2)  \\
         &= (q + q\iv)f_2^2f_{2''}f_1f_{2''}f_{2'}^2 
              - (q + q\iv)f_{2'}f_{2''}^2(f_2^2f_1)f_{2'} + f_2^2f_{2''}^2f_{2'}^2f_1 \\
         &= (q + q\iv)f_2^2f_{2''}f_1f_{2''}f_{2'}^2 
    - (q + q\iv)^2f_{2'}f_{2''}^2f_2f_1f_2f_{2'} \\
    &\phantom{*****} 
        + (q + q\iv)f_{2'}f_{2''}^2f_1f_2^2f_{2'} + f_2^2f_{2''}^2f_{2'}^2f_1.
\end{align*}
It follows that 
\begin{equation*}
f_1(f_2f_{2'}f_{2''})^2 - (q + q\iv)^3(f_2f_{2'}f_{2''})f_1(f_2f_{2'}f_{2''}) 
           + (f_2f_{2'}f_{2''})^2f_1 + Z = 0,  
\end{equation*}
where 
\begin{equation*}
Z = (q + q\iv)\bigl( f_2f_{2'}^2f_1f_{2''}^2f_2 + f_{2''}f_2^2f_1f_{2'}^2f_{2''}
                    + f_{2'}f_{2''}^2f_1f_2^2f_{2'}\bigr).
\end{equation*}
Since $\ve = 3$, $Z \in J$, we obtain (3.4.1). 

\para{3.5.}
It remains to prove (3.4.2).  
We shall prove the following formula in ${}_{\BA'}\BU_q^-$.
\begin{align*}
\tag{3.5.1}
(f_2f_{2'}f_{2''})f_1^4 &- [4]_1f_1(f_2f_{2'}f_{2''})f_1^3 + 
       \begin{bmatrix}
            4  \\
            2
       \end{bmatrix}_1f_1^2(f_2f_{2'}f_{2''})f_1^2  \\ 
            &- [4]_1f_1^3(f_2f_{2'}f_{2''})f_1 + f_1^4(f_2f_{2'}f_{2''}) \equiv 0  \mod J.
\end{align*}
Clearly (3.5.1) will imply (3.4.2).  
The proof of (3.5.1) by the direct computation 
as in the case of $B_2$ seems to be difficult. 
Instead, we will prove 
(3.5.1) by making use of PBW-basis of $\BU_q^-$. 
\par
Let $\Bh = (i_1, \dots, i_{\nu})$ be a sequence associated to 
the longest element $w_0$ of $W$.  Here $W$ is of type $D_4$, and 
$\nu = 12$.  We choose $\Bh$ as
\begin{equation*}
\tag{3.5.2}
\Bh = (2, 2', 2'', 1, 2, 2', 2'', 1, 2, 2', 2'',1). 
\end{equation*}

We define $\b_k = s_{i_1}\cdots s_{i_{k-1}}(\a_{i_k})$ for $k = 1, \dots, \nu = 12$. 
Then the set $\vD^+$ of positive roots is given as 
\begin{align*}
\tag{3.5.3}
\vD^+ &=\{ \b_1, \dots, \b_{12}\} \\
      &= \{ 2, 2', 2'', 122'2'', 12'2'', 122'', 122', 
              1122'2'', 12, 12', 12'', 1\}, 
\end{align*}
where we use the notation for positive roots such as $12 \lra \a_1 + \a_2$, 
$12'2'' \lra \a_1 + \a_{2'} + \a_{2''}$, etc. 
For $k = 1, \dots, \nu$, the root vector $f_{\b_k}^{(c)}$ is defined by 
$f_{\b_k}^{(c)} = T_{i_1}\cdots T_{i_{k-1}}(f_{i_k}^{(c)})$.  
Then PBW-basis of $\BU_q^-$ is given as $\{ L(\Bc, \Bh) \mid \Bc \in \BN^{12} \}$, 
where for $\Bc = (c_1, \dots, c_{12})$, 
\begin{equation*}
L(\Bc,\Bh) = f_2^{(c_1)}f_{2'}^{(c_2)}f_{2''}^{(c_3)}
        f_{122'2''}^{(c_4)}f_{12'2''}^{(c_5)}f_{122''}^{(c_6)}f_{122'}^{(c_7)}
          f_{1122'2''}^{(c_8)}f_{12}^{(c_9)}f_{12'}^{(c_{10})}f_{12''}^{(c_{11})}
               f_1^{(c_{12})}.
\end{equation*}
 
We use the following commutation relations,

\begin{align*}
\tag{3.5.4}
f_{12} &= f_1f_2 - qf_2f_1  \quad (\text{similarly for } \ f_{12'}, f_{12''}), \\
f_{122'} &= f_{12}f_{2'} - qf_{2'}f_{12} = f_{12'}f_2 - qf_2f_{12'}, 
         \quad (\text{similarly for } \ f_{12'2''}, f_{122''}) \\
f_{122'2''} &= f_{122'}f_{2''} - qf_{2''}f_{122'} = f_{12'2''}f_2 - qf_2f_{12'2''}
             = f_{122''}f_{2'} - qf_{2'}f_{122''}, \\
f_{1122'2''} &= f_{12''}f_{122'} - qf_{122'}f_{12''}
            = f_{12}f_{12'2''} - qf_{12'2''}f_{12}
            = f_{12'}f_{122''} - qf_{122''}f_{12'}, 
\end{align*}

The following formulas are obtained by applying 
the commutation formula of Levendorskii and Soibelman [LS]. 
\begin{align*}
f_{122'2''}f_{2} &= q\iv f_{2}f_{122'2''}, \quad (\text{similarly for }
                       \ f_{122'2''}f_{2'}, f_{122'2''}f_{2''}), \\
f_{12'2''}f_{122'2''} &= q\iv f_{122'2''}f_{12'2''}, 
        \quad(\text{similarly for } f_{122'}f_{122'2''}, f_{122''}f_{122'2''}), \\
f_{122''}f_{12'2''} &= f_{12'2''}f_{122''}, 
        \quad(\text{similarly for } \ f_{122'}f_{122''}, f_{122'}f_{12'2''}), \\
f_{1122'2''}f_{122'} &= q\iv f_{122'}f_{1122'2''}, 
        \quad(\text{similarly for } f_{1122'2''}f_{122''}, f_{1122'2''}f_{12'2''}), \\
f_{12}f_{1122'2''} &= q\iv f_{1122'2''}f_{12}, 
        \quad(\text{similarly for } f_{12'}f_{1122'2''}, f_{12''}f_{1122'2''}), \\
f_{12'}f_{12} &= f_{12}f_{12'}, 
        \quad(\text{similarly for } f_{12''}f_{12'},  f_{12''}f_{12}), \\  
f_1f_{12} &= q\iv f_{12}f_1, 
         \quad(\text{similarly for } f_1f_{12'}, f_1f_{12''}). 
\end{align*}
By using those relations, we obtain 
\begin{equation*}
f_1f_{12'2''} = f_{12'2''}f_1 - (q - q\iv)f_{12'}f_{12''}, 
          \quad(\text{similarly for } f_1f_{122''}, f_1f_{122'}). 
\end{equation*}
Also we can compute
\begin{align*}
f_1(f_2f_{2'}f_{2''}) &= f_{122'2''}  
       +  q(f_{2''}f_{122'} + f_{2'}f_{122''} + f_2f_{12'2''})   \\
       &\phantom{*****} + q^2(f_{2'}f_{2''}f_{12} + f_2f_{2''}f_{12'} + f_2f_{2'}f_{12''}) 
       + q^3f_2f_{2'}f_{2''}f_1.
\end{align*}
It follows that 
\begin{equation*}
\tag{3.5.5}
f_1(f_2f_{2'}f_{2''}) \equiv f_{122'2''} + q^3f_2f_{2'}f_{2''}f_1 \mod J.  
\end{equation*}

By multiplying $f_1$ from the left on both sides of (3.5.5), we have 
\begin{align*}
f_1^2(f_2f_{2'}f_{2''}) &\equiv f_1f_{122'2''} + q^3f_1(f_2f_{2'}f_{2''}f_1) \\
            &\equiv f_1f_{122'2''} + q^3(f_{122'2''} + q^3f_2f_{2'}f_{2''}f_1)f_1 \\
            &= f_1f_{122'2''} + q^3f_{122'2''}f_1 + q^6f_2f_{2'}f_{2''}f^2_1, 
\end{align*}
where we again used (3.5.5) in the second identity. On the other hand, 
we can compute
\begin{align*}
\tag{3.5.6}
f_1f_{122'2''} &= qf_{122'2''}f_1 - q(q - q\iv)
          \{ f_{12'2''}f_{12} + f_{122''}f_{12'} + f_{122'}f_{12''}\} \\ 
         &\phantom{*****} + (q\iv -2q)f_{1122'2''}  \\
               &\equiv  qf_{122'2''}f_1  + (q + q\iv)f_{1122'2''} \mod J.  
\end{align*}

Hence we have
\begin{equation*}
\tag{3.5.7}
f_1^2(f_2f_{2'}f_{2''}) \equiv (q + q^3)f_{122'2''}f_1 + (q + q\iv)f_{1122'2''}
                           + q^6f_2f_{2'}f_{2''}f_1^2. 
\end{equation*}

Next by multiplying $f_1$ from the left on both sides of (3.5.7), we have
\begin{equation*}
\tag{3.5.8}
f_1^3(f_2f_{2'}f_{2''}) \equiv (q + q^3)f_1f_{122'2''}f_1 
           + (q + q\iv)f_1f_{1122'2''} + q^6f_1f_2f_{2'}f_{2''}f_1^2.
\end{equation*} 
Here we can compute 
\begin{equation*}
\tag{3.5.9}
f_1f_{1122'2''} = q\iv f_{1122'2''}f_1 + (q - q\iv)^2f_{12}f_{12'}f_{12''}.
\end{equation*}
Thus by applying (3.5.6), (3.5.9) and (3.5.5) to (3.5.8), we have

\begin{align*}
\tag{3.5.10}
f_1^3(f_2f_{2'}f_{2''}) \equiv 
           &(q^6 + q^4 + q^2)f_{122'2''}f_1^2 
          + (q^4 + 2q^2 + 2 + q^{-2})f_{1122'2''}f_1  \\ 
          &+ (q^3 + 2q + 2q\iv + q^{-3})f_{12}f_{12'}f_{12''}
          + q^9f_2f_{2'}f_{2''}f_1^3. 
\end{align*}

Here we note that 
\begin{equation*}
\tag{3.5.11}
f_1f_{12}f_{12'}f_{12''} = q^{-3}f_{12}f_{12'}f_{12''}f_1.
\end{equation*}

Then by multiplying $f_1$ from the left on both sides of (3.5.10), 
and by applying (3.5.6), (3.5.9), (3.5.11) and (3.5.5), we have

\begin{align*}
\tag{3.5.12}
f_1^4(f_2f_{2'}f_{2''}) \equiv 
       (q^9 + &q^7 + q^5 + q^3)f_{122'2''}f_1^3 
      + (q^7 + 2q^5 + 2q\iv + q^{-3})f_{1122'2''}f_1^2  \\ 
      &+ (q^6 + 2q^2 + 2q^{-2} + q^{-6})f_{12}f_{12'}f_{12''}f_1
      + q^{12}f_2f_{2'}f_{2''}f_1^4.
\end{align*}

Now (3.5.1) can be verified easily by (3.5.5), (3.5.7), (3.5.10) and (3.5.12). 
Thus (3.4.2) is verified, and (3.1.3) holds for the case $\ul X$ is of type 
$G_2$. This completes the proof of Proposition 1.10.

\remark{3.6.}
In the case where $\ul X$ is of type $B_2$, the equality (3.3.7) holds 
in $\BU_q^-$. This is also true for the case
of type $G_2$.  In fact, a more precise computation shows that (3.5.1) holds 
in $\BU_q^-$, without appealing modulo $J$ nor modulo 3.

\section{The proof of Lemma 1.13}

\para{4.1.} 
We consider the Cartan matrix as in 3.1.
Since $\ul X$ has rank 2, $\ul w_0$ has two reduced expressions 
$\ul\Bh = (\eta_1, \dots, \eta_{\ul \nu})$ and 
$\ul\Bh' = (\eta_1', \dots, \eta'_{\ul \nu})$. Let $*$ be the anti-algebra 
automorphism  of $\BU_q^-$ and of $\ul\BU_q^-$. 
It is known that 
\begin{align*}
(\ul T_{\eta_1}\cdots \ul T_{\eta_{k-1}}(\ul f_{\eta_k}))^*
   &= \ul T_{\eta'_1}\cdots \ul T_{\eta'_{\ul \nu - k}}(\ul f_{\eta'_{\ul \nu - k +1}}), 
\end{align*}
and the following formula is obtained from the corresponding formula for $\BU_q^-$, 
\begin{equation*}
(R_{\eta_1}\cdots R_{\eta_{k-1}}(\wt f_{\eta_k}))^*
   = R_{\eta'_1}\cdots R_{\eta'_{\ul \nu - k}}(\wt f_{\eta'_{\ul \nu - k + 1}}).
\end{equation*} 
Thus we may verify (1.13.1) for a fixed $\ul\Bh$. 
\par
In the case where $\ul X$ has type $A_1 \times A_1$, there is nothing to prove. 

\para{4.2.}
Assume that $\ul X$ has type $A_2$. 
We write $I = \{ 1, 1', 2, 2' \}$ with $\ul I = \{ \ul 1, \ul 2\}$, 
where $\ul 1 = \{ 1,1'\}, \ul 2 = \{ 2,2'\}$.  
Put $\ul\Bh = (\ul 2, \ul 1, \ul 2)$.  Then $\ul \vD^+ = \{\ul 2, \ul{12}, \ul 1\}$.
We have 
\begin{equation*}
\ul T_{\ul 2}(\ul f_{\ul 1}) = \ul f_{\ul 1}\ul f_{\ul 2} - q^2\ul f_{\ul 2}\ul f_{\ul 1}, 
\quad \ul T_{\ul 2}\ul T_{\ul 1}(\ul f_{\ul 2}) = \ul f_{\ul 1}.
\end{equation*} 
We have 
\begin{align*}
\tag{4.2.1}
R_{\ul 2}(\wt f_{\ul 1}) &= T_2T_{2'}(f_1f_{1'}) = T_2(f_1)T_{2'}(f_{1'}) \\
                         &= (f_1f_2 - qf_2f_1)(f_{1'}f_{2'} - qf_{2'}f_{1'})  \\
      &= f_{1}f_{1'}f_2f_{2'} + q^2f_2f_{2'}f_1f_{1'} - qZ, 
\end{align*}
with
\begin{align*}
Z &= f_1f_2f_{2'}f_{1'} + f_2f_1f_{1'}f_{2'}  \\
&= (f_{12} + qf_2f_1)f_{2'}f_{1'} + f_2f_1(f_{1'2'} + qf_{2'}f_{'}) \\
  &= f_{12}f_{2'}f_{1'} + f_{1'2'}f_2f_1 + 2qf_2f_1f_{2'}f_{1'},
\end{align*}
where $f_{12} = T_2(f_1) = f_1f_2 - qf_2f_1$ 
and $f_{1'2'} = T_{2'}(f_{1'}) = f_{1'}f_{2'}-qf_{2'}f_{1'}$. 
Since $\s(f_{12}) = f_{1'2'}$, we see that $Z \in J$. 
Thus $\pi(R_{\ul 2}(\wt f_1)) = g_{\ul 1}g_{\ul 2} - q^2g_{\ul 2}g_{\ul 1}$
and (1.13.1) holds for $\ul T_2(\ul f_{\ul 1})$. 
Moreover, 
\begin{align*}
\tag{4.2.2}
R_{\ul 2}R_{\ul 1}(\wt f_{\ul 2}) &= T_2T_{2'}T_1T_{1'}(f_2f_{2'}) \\
            &= T_2T_1(f_2)T_{2'}T_{1'}(f_{2'})  \\
            &= f_1f_{1'}. 
\end{align*}
Hence $\pi(R_{\ul 2}R_{\ul 1}(\wt f_{\ul 2})) = g_1$, and (1.13.1) holds for
$\ul T_{\ul 2}\ul T_{\ul 1}(\ul f_{\ul 2})$.  The lemma holds for $\ul X$ of type $A_2$.  

\para{4.3.}
Next assume that $\ul X$ has type $B_2$, and $X$ has type $A_3$. 
We write $I = \{ 2,1,2'\}$ and $\ul I = \{ \ul 1, \ul 2\}$, where 
$\ul 1 = \{ 1 \}, \ul 2 = \{ 2, 2' \}$. Put 
$\Bh = (2,2',1, 2,2',1)$ and $\vD^+ = \{ 2,2',122',12',12,1 \}$. 
Then $\ul\Bh = (\ul 2, \ul 1, \ul 2, \ul 1)$ and 
$\ul\vD^+ = \{ \ul 2, \ul{12}, \ul{112}, \ul 1\}$.
We define root vectors and PBW-bases of $\BU_q^-$ and $\ul\BU_q^-$ similarly 
to the case of $G_2$ in 3.5. 
Then we have
\begin{align*}
\tag{4.3.1}
\ul f_{\ul{12}} &= \ul T_{\ul 2}(\ul f_{\ul 1}) 
              = \ul f_{\ul 1}\ul f_{\ul 2} - q^2\ul f_{\ul 2}\ul f_{\ul 1},  \\
\ul f_{\ul{112}} &= \ul T_{\ul 2}\ul T_{\ul 1}(\ul f_{\ul 2}) 
= (q + q\iv)\iv (\ul f_{\ul 1} \ul f_{\ul{12}} - \ul f_{\ul{12}} \ul f_{\ul 1}), \\ 
\ul f_{\ul 1} &= \ul T_{\ul 2}\ul T_{\ul 1}\ul T_{\ul 2}(\ul f_{\ul 1}).
\end{align*}

We compute 
\begin{align*}
\tag{4.3.2}
R_{\ul 2}(\wt f_{\ul 1}) &= T_2T_{2'}(f_1) = T_2(f_1f_{2'} - qf_{2'}f_1) \\
           &= (f_1f_2 - qf_2f_1)f_{2'} - qf_{2'}(f_1f_2 - qf_2f_1)  \\
           &= f_1f_2f_{2'} + q^2 f_2f_{2'}f_1 
                  - q(f_2f_1f_{2'} + f_{2'}f_1f_2).  
\end{align*}
Hence $\pi(R_{\ul 2}(\wt f_{\ul 1})) = g_{\ul 1}g_{\ul 2} - q^2 g_{\ul 21}g_{\ul 1}$
and (1.13.1) holds for $\ul f_{12}$.  Also 
\begin{align*}
\tag{4.3.3}
R_{\ul 2}R_{\ul 1}R_{\ul 2}(\wt f_{\ul 1}) &= T_2(T_{2'}T_1T_{2'})T_{2}(f_1) \\
             &= T_2(T_1T_{2'}T_1)T_2(f_1) \\
             &= T_2T_1T_{2'}(f_2)  \\
             &= T_2T_1(f_2) = f_1.
\end{align*}
Hence $\pi(R_{\ul 2}R_{\ul 1}R_{\ul 2}(\wt f_{\ul 1})) = g_{\ul 1}$, and 
(1.13.1) holds for $\ul f_{\ul 1}$.
\par
Finally consider 
\begin{align*}
\tag{4.3.4}
R_{\ul 2}R_{\ul 1}(\wt f_{\ul 2}) &=  T_2T_{2'}T_1(f_2f_{2'}) \\
            &= T_{2'}(T_2T_1(f_2))\cdot T_2(T_{2'}T_1(f_{2'})) \\
            &= T_{2'}(f_1)T_2(f_1)  \\
            &= f_{12'}f_{12}.
\end{align*} 

Put 
\begin{align*}
\tag{4.3.5}
Z_{\ul{112}} &= f_1(f_1f_2f_{2'} - q^2f_2f_{2'}f_1) - (f_1f_2f_{2'} - q^2f_2f_{2'}f_1)f_1 \\
  &= f_1^2f_2f_{2'} - (q^2 + 1)f_1f_2f_{2'}f_1  + q^2f_2f_{2'}f_1^2. 
\end{align*}
Clearly $Z_{\ul{112}} \in \BU_q^{-,\s}$, and 
$\pi(Z_{\ul{112}}) = (q + q\iv) \Phi(\ul f_{\ul{112}})$ by (4.3.1).  
We express $Z_{\ul{112}}$ in terms of PBW-basis of $\BU_q^-$. 
By using (3.2.3) for $f_1^2f_2$ and $f_1^2f_{2'}$, we have
\begin{equation*}
\tag{4.3.6}
f_1^2f_2f_{2'} = (q + q\iv)f_1f_2f_1f_{2'} - (q + q\iv)f_2f_1f_{2'}f_1 + f_2f_{2'}f_1^2.
\end{equation*}
\begin{align*}
f_1f_2f_1f_{2'} &= (qf_2f_1 + f_{12})(qf_{2'}f_1 + f_{12'}) \\
       &= q^2f_2f_1f_{2'}f_1 + qf_{12}f_{2'}f_1 + qf_2f_1f_{12'}  + f_{12}f_{12'} \\
       &= q^3f_2f_{2'}f_1^2 + qf_{122'}f_1 + f_2f_{12'}f_1 + f_{12}f_{12'}
            + q^2(f_2f_{12'}f_1 + f_{2'}f_{12}f_1), \\
f_2f_1f_{2'}f_1 &= qf_2f_{2'}f_1^2 + f_2f_{12'}f_1. 
\end{align*}
Here we have used the formula $f_1f_{12'} = q\iv f_{12'}f_1$. 
Moreover, by using $f_{12}f_{2'} = qf_{2'}f_{12} + f_{122'}$, we have
\begin{align*}
\tag{4.3.7}
f_1f_2f_{2'}f_1 &= q^2f_2f_{2'}f_1^2 + q(f_2f_{12'}f_1 + f_{2'}f_{12}f_1) + f_{122'}f_1.
\end{align*}
Substituting these formulas into (4.3.5), we see that 
\begin{equation*}
\tag{4.3.8}
Z_{\ul{112}}  = (q + q\iv)f_{12}f_{12'}.
\end{equation*}
Combining this with (4.3.4), (4.3.5),  
we obtain $\Phi(\ul f_{\ul{112}}) = \pi(R_{\ul 2}R_{\ul 1})(f_1)$.
Thus the lemma holds for $\ul X$ of type $B_2$.  

\para{4.4.}
Finally assume that $\ul X$ has type $G_2$. We follow the notation in 3.5.
Put $\ul\Bh = (\ul 2, \ul 1, \ul 2, \ul 1, \ul 2, \ul 1)$.  Then 
$\ul\vD^+ = \{\ul 2, \ul{12}, \ul{11122}, \ul{112}, \ul{1112}, \ul 1  \}$.
We have
\begin{align*}
\tag{4.4.1}
\ul f_{\ul{12}} &= \ul T_{\ul 2}(\ul f_{\ul 1}) 
                = \ul f_{\ul 1}\ul f_{\ul 2} - q^3\ul f_{\ul 2}\ul f_{\ul 1}, \\
\ul f_{\ul{11122}} &= \ul T_{\ul 2}\ul T_{\ul 1}(\ul f_{\ul 2})
        = [3]_1\iv(\ul f_{\ul{112}}\ul f_{\ul{12}} - q\iv \ul f_{\ul{12}}\ul f_{\ul{112}}), \\  
\ul f_{\ul{112}} &= \ul T_{\ul 2}\ul T_{\ul 1}\ul T_{\ul 2}(\ul f_{\ul 1})
        = [2]_1\iv(\ul f_{\ul 1}\ul f_{\ul{12}} - q\ul f_{\ul{12}}\ul f_{\ul 1}), \\
\ul f_{\ul{1112}} &= \ul T_{\ul 2}\ul T_{\ul 1}\ul T_{\ul 2}\ul T_{\ul 1}(\ul f_{\ul 2})
       = [3]_1\iv(\ul f_{\ul 1}\ul f_{\ul{112}} - q\iv \ul f_{\ul{112}}\ul f_{\ul 1})  \\
\ul f_{\ul 1} &= \ul T_{\ul 2}\ul T_{\ul 1}\ul T_{\ul 2}\ul T_{\ul 1}\ul T_{\ul 2}(\ul f_{\ul 1}). 
\end{align*} 

First consider the case $\ul f_{\ul{12}}$. By using (4.3.2), we have
\begin{align*}
R_{\ul 2}(\wt f_{\ul 1}) &= T_2T_{2'}T_{2''}(f_1) \\
       &= T_{2''}\bigl(f_1f_2f_{2'} + q^2f_2f_{2'}f_1 
                  - q(f_2f_1f_{2'} + f_{2'}f_1f_2) \bigr) \\
       &= f_1(f_2f_{2'}f_{2''}) - q^3(f_2f_{2'}f_{2''})f_1 \\
       &\phantom{***} -q(f_2f_1f_{2'}f_{2''} + f_{2'}f_1f_{2''}f_2 + f_{2''}f_1f_2f_{2'})  \\
       & \phantom{***}+q^2(f_2f_{2'}f_1f_{2''} + f_{2'}f_{2''}f_1f_2 + f_{2''}f_2f_1f_{2'}).
\end{align*}
Hence $\pi(R_{\ul 2}(\wt f_1)) = g_{\ul 1}g_{\ul 2} - q^3g_{\ul 2}g_{\ul 1}$
and (1.13.1) holds for $\ul f_{\ul{12}}$. 

\par
Next consider the case $\ul f_{\ul{112}}$. Put
\begin{align*}
Z_{\ul{12}}  &= f_1(f_2f_{2'}f_{2''}) - q^3(f_2f_{2'}f_{2''})f_1, \\
Z_{\ul{112}} &= f_1Z_{\ul {12}} - qZ_{\ul{12}}f_1.
\end{align*}
Then we have
\begin{align*}
\tag{4.4.2}
Z_{\ul{112}} = 
   f_1^2f_2f_{2'}f_{2''} - (q^3 + q)f_1f_2f_{2'}f_{2''}f_1 + q^4f_2f_{2'}f_{2''}f_1^2. 
\end{align*}
Clearly $Z_{\ul{112}} \in \BU_q^{-,\s}$, and we have 
\begin{equation*}
\tag{4.4.3}
\pi(Z_{\ul{112}}) = (q + q\iv)\Phi(\ul f_{\ul{112}})
\end{equation*}
by (4.4.1). We express each term of $Z_{\ul{112}}$ in terms of PBW-basis. 
By (3.5.5), we have 
\begin{align*}
\tag{4.4.4}
f_1(f_2f_{2'}f_{2''})f_1 \equiv f_{122'2''}f_1 + q^3f_2f_{2'}f_{2''}f_1^2 
        \quad \mod J.  
\end{align*}
By (3.5.7), we have
\begin{align*}
\tag{4.4.5}
f_1^2(f_2f_{2'}f_{2''}) \equiv (q &+ q^3)f_{122'2''}f_1 + (q + q\iv)f_{1122'2''}
             + q^6f_2f_{2'}f_{2''}f_1^2 \quad \mod J. 
 \end{align*}
Substituting these formulas into (4.4.2), we have 
$Z_{\ul{112}} \equiv (q + q\iv)f_{1122'2''} \mod J$, which implies that
\begin{equation*}
\tag{4.4.6}
\pi(Z_{\ul{112}}) = (q + q\iv)\pi(f_{1122'2''}).
\end{equation*}
Note that by (3.5.2) and (3.5.3), we have 
\begin{equation*}
R_{\ul 2}R_{\ul 1}R_{\ul 2}(f_1) = T_2T_{2'}T_{2''}T_1T_2T_{2'}T_{2''}(f_1) = f_{1122'2''}. 
\end{equation*}
By comparing (4.4.3) and (4.4.6), we obtain 
\begin{equation*}
\tag{4.4.7}
\pi(R_{\ul 2}R_{\ul 1}R_{\ul 2}(f_1)) = \Phi(\ul f_{\ul{112}}).
\end{equation*}
Thus (1.13.1) holds for $\ul f_{\ul{112}}$. 
\par
Next consider the case of $\ul f_{\ul{1112}}$. 
Put 
\begin{equation*}
Z_{\ul{1112}} = f_1Z_{\ul{112}} - q\iv Z_{\ul{112}}f_1.
\end{equation*}
It follows from the computation of $Z_{\ul{112}}$ in (4.4.2), we have
\begin{align*}
\tag{4.4.8}
Z_{\ul{1112}} = f_1^3&f_2f_{2'}f_{2''} - (q^3 + q + q\iv)f_1^2f_2f_{2'}f_{2''}f_1  \\
                &+ (q^4 + q^2 + 1)f_1f_2f_{2'}f_{2''}f_1^2 - q^3f_2f_{2'}f_{2''}f_1^3.
\end{align*}

\par
Clearly $Z_{\ul{1112}} \in \BU_q^{-,\s}$, and we have
\begin{equation*}
\tag{4.4.9}
\pi(Z_{\ul{1112}}) = [2]_1[3]_1\Phi(\ul f_{\ul{1112}}). 
\end{equation*}
By (3.5.10), we have
\begin{align*}
f_1^3(f_2f_{2'}f_{2''}) \equiv (q^6 &+ q^4 + q^2)f_{122'2''}f_1^2 
          + (q^4 + 2q^2 + 2 + q^{-2})f_{1122'2''}f_1 \\
         &+ (q^3 + 2q^2 + 2q\iv + q^{-3})f_{12}f_{12'}f_{12''} + q^9f_2f_{2'}f_{2''}f_1^3.
\end{align*}
By this formula together with (3.5.7) and (3.5.5), we have
$Z_{\ul{1112}} = [2]_1[3]_1f_{12}f_{12'}f_{12''} \mod J$, which implies that. 
\begin{align*}
\tag{4.4.10}
\pi(Z_{\ul{1112}}) = [2]_1[3]_1\pi(f_{12}f_{12'}f_{12''}).
\end{align*}
Note that by (3.5.2) and (3.5.3), we have
\begin{align*}
R_{\ul 2}R_{\ul 1}R_{\ul 2}R_{\ul 1}(\wt f_{\ul 2})
      &= T_2T_{2'}T_{2''}T_1T_2T_{2'}T_{2''}T_1(f_2f_{2'}f_{2''})  \\
      &= f_{12}f_{12'}f_{12''}.
\end{align*}
By comparing (4.4.9) and (4.4.10), we obtain 
\begin{equation*}
\pi(R_{\ul 2}R_{\ul 1}R_{\ul 2}R_{\ul 1}(\wt f_{\ul 2})) = \Phi(f_{\ul{1112}}).
\end{equation*}
Thus (1.13.2) holds for $\ul f_{\ul{1112}}$. 
\par
Finally consider the case of $\ul f_{\ul{11122}}$. 
Put
\begin{equation*}
Z_{\ul{11122}} = f_{1122'2''}f_{122'2''} - q\iv f_{122'2''}f_{1122'2''}.
\end{equation*}
By (3.5.2) and (3.5.3), we have
\begin{align*}
R_{\ul 2}(f_1) = T_2T_{2'}T_{2''}(f_1) = f_{122'2''}. 
\end{align*}
Hence, by the previous computation, 
we know that $\pi(f_{122'2''}) = \Phi(\ul f_{\ul{12}})$.  
On the other hand, by (4.4.7), we have $\pi(f_{1122'2''}) = \Phi(\ul f_{\ul{112}})$.
It follows, by (4.4.1), that 
\begin{align*}
\tag{4.4.11}
\pi(Z_{\ul{11122}}) = [3]_1\Phi(\ul f_{\ul{11122}}).
\end{align*}
We note, by (3.5.2) and (3.5.3),  that
\begin{align*}
R_{\ul 2}R_{\ul 1}(\ul f_{\ul 2}) = T_2T_{2'}T_{2''}T_1(f_2f_{2'}f_{2''}) 
                                  = f_{12'2''}f_{122''}f_{122'}. 
\end{align*} 
Thus in order to prove (1.13.1) for $\ul f_{\ul{11122}}$, 
it is enough to see that 
\begin{equation*}
\tag{4.4.12}
Z_{\ul{11122}} \equiv [3]_1f_{12'2''}f_{122''}f_{122'} \mod J.
\end{equation*}

We shall express $Z_{\ul{11122}}$  in terms of the PBW-basis of $\BU_q^-$.  
In the computation below, in addition to the formulas in 3.5, 
we need to use the following commutation relations, 
which are deduced from  the formula of Levendorskii and Soibelman [LS] 
applied for the subalgebra of type $A_3$. .

\begin{align*}
\tag{4.4.13}
f_{12}f_2 &= q\iv f_2f_{12},  \\
f_{122''}f_2 &= q\iv f_2f_{122''}, \\
f_{122'}f_2 &= q\iv f_2f_{122'}, \\
f_{12}f_{122'} &= q\iv f_{122'}f_{12}, \\
f_{12}f_{122''} &= q\iv f_{122''}f_{12},
\end{align*}
and the formulas (two for each) by applying the operation $\s$ on
both sides. By using these relations, we have
\begin{align*}
\tag{4.4.14}
f_{12}f_{12'2''} &= f_{122'2''}f_{12} + (q\iv - q)f_{122''}f_{122'}, \\
f_{1122'2''}f_2 &= f_2f_{1122'2''} + (q\iv -q)f_{122''}f_{122'}, 
\end{align*}
and the formulas (two for each) by applying the operation $\s$ on 
both sides. 
\par
Now we can compute (note that the second formula in (4.4.14) is not
used in this computation) 
\begin{align*}
f_{1122'2''}f_{122'2''} = (q^2  - 2 + q^{-2})f_{12'2''}f_{122''}f_{122'} 
                     + q\iv f_{122'2''}f_{1122'2''}.
\end{align*}
Hence
\begin{align*}
Z_{\ul{11122}} &= f_{1122'2''}f_{122'2''} - q\iv f_{122'2''}f_{1122'2''} \\
               &= (q^2 - 2 + q^{-2})f_{12'2''}f_{122''}f_{122'} \\
               &\equiv [3]_1f_{12'2''}f_{122''}f_{122'} \mod J. 
\end{align*}
Thus (4.4.12) holds, and (1.13.1) is proved for $\ul f_{\ul{11122}}$. 
The lemma holds for $\ul X$ of type $G_2$. 

\par\bigskip

\par\vspace{1.5cm}
\noindent
T. Shoji \\
School of Mathematical Sciences, Tongji University \\ 
1239 Siping Road, Shanghai 200092, P.R. China  \\
E-mail: \verb|shoji@tongji.edu.cn|

\par\vspace{0.5cm}
\noindent
Z. Zhou \\
School of Mathematical Sciences, Tongji University \\ 
1239 Siping Road, Shanghai 200092, P.R. China  \\
E-mail: \verb|forza2p2h0u@163.com|



\begin{thebibliography}{[DJMu]}
\bibitem [B] {B} J. Beck; Braid group action and quantum affine algebras, 
Comm. Math. Phys. {\bf 165} (1994), 555-568.
\par
\bibitem [BCP] {BCP} J. Beck, V. Chari and A. Pressley; An algebraic characterization of 
the affine canonical basis, Duke Math. J. {\bf 99} (1999), 455 - 487.
\par  
\bibitem [BN] {BN} J. Beck and H. Nakajima; Crystal bases and two-sided cells 
of quantum affine algebras; Duke Math. J. {\bf 123} (2004), 335 - 402. 
\par
\bibitem [Ka] {Ka} V.G. Kac; Infinite dimensional Lie algebras (3rd Ed.), 
Cambridge University Press, 1990. 
\par
\bibitem [K1] {K1} M. Kashiwara; On crystal bases of the $q$-analogue of universal 
enveloping algebras, Duke Math. J. {\bf 73} (1994), 383-413.  
\par
\bibitem [K2] {K2}  M. Kashiwara; On level zero representations of quantized 
enveloping algebras, Duke Math. J. {\bf 112} (2002), 117-175. 
\par
\bibitem [L1] {L1} G. Lusztig; Quantum groups at roots of 1, Geom. Ded. {\bf 35}
(1990), 89-114. 
\par
\bibitem [L2] {L2} G. Lusztig; Canonical bases arising from quantized enveloping 
algebras, J. Amer. Math. Soc. {\bf 3} (1990), 447-498. 

\par
\bibitem [L3] {L4} G. Lusztig; Introduction to quantum groups, Progress in Math.
Vol. {\bf 110} Birkhauser, Boston/Basel/Berlin, 1993.

\par
\bibitem [L4] {L5} G. Lusztig; Piecewise linear parametrization of canonical 
bases, Pure Appl. Math. Quart. {\bf 7} (2011), 783 - 796.

\par
\bibitem [LS] {LS} S. Levendorskii and Y. Soibelman; 
Some applications of the quantum Weyl groups, J. Geom. Phys. {\bf 7} (1990), 
241-254.  

\par
\bibitem [MNZ] {MNZ} Y. Ma, D. Niu and Z. Zhou; Diagram automorphisms and 
canonical bases - some examples in low rank cases -, in preparation.  

\par
\bibitem [N] {N} H. Nakajima; Crystal bases of quantum affine algebras, 
lecture note on the lectures at Sophia University, 2006, (in Japanese).  
  
\par
\bibitem [NS] {NS} S. Naito and D. Sagaki;
Crystal base elements of an extremal weight module fixed by a diagram 
automorphism, Alg. Rep. Theory {\bf 8} (2005), 689-707.

\par
\bibitem [S] {S} A Savage; A geometric construction of crystal graph using 
quiver varieties: extension to the non-simply laced case, Contemp. Math. {\bf 392}
(2005), 133-154. 
\par
\bibitem [X1] {X1} N. Xi; On the PBW bases of the quantum group $U_v(G_2)$,
Algebra Colloq. {\bf 2} (1995), 355-362. 

\par
\bibitem [X2] {X2} N. Xi; Canonical basis for type $B_2$, 
J. Algebra {\bf 214} (1999), 8-21.
\end{thebibliography}
\end{document}